\numberwithin{equation}{section}
\newtheorem{theorem}{Theorem}[section]
\newtheorem{lemma}{Lemma}[section]
\newtheorem{proposition}{Proposition}[section]
\newtheorem{corollary}{Corollary}[section]
\newtheorem{remark}{Remark}[section]
\numberwithin{equation}{section}
\title{\textbf{Interpolation inequality at one time point for parabolic equations with time-independent coefficients and applications\thanks{The first author acknowledges the financial support by 
the National Natural Science Foundation of China under grants 11601377. The second author acknowledges the financial support by the National Natural Science Foundation of China under grants 11501424, and by Ministerio de Ciencia e Innovaci\'on grant MTM2014-53145-P, Spain.
}}}
\author{Huaiqiang Yu\thanks{School of Mathematics, Tianjin University, Tianjin 300354, China. \emph{Email: huaiqiangyu@tju.edu.cn}}\qquad
Can Zhang\thanks{School of Mathematics and Statistics, Wuhan University, Wuhan, 430072, China,
and Department of Mathematics, University of the Basque Country (UPV/EHU),  Bilbao, 48080, Spain.
\emph{Email: zhangcansx@163.com}}\quad
}
\begin{document}
\maketitle
\begin{abstract}
In this paper, we study the H\"older-type interpolation inequality and observability inequality from measurable sets in time for parabolic equations either with $L^p$ unbounded potentials or with electric potentials. The parabolic equations under consideration evolve in  bounded $C^{1,1}$ domains of 
$\mathbb R^N (N\geq3)$ with homogeneous Neumann boundary conditions. 
The approach for   the  interpolation inequality is based on a modified 
reduction method and some stability estimates for the corresponding elliptic operator.
\end{abstract}
\vskip 8pt
    \noindent

\vskip 5pt
    \noindent\textbf{Keywords.} Interpolation inequality, observability inequality, controllability,
    quantitative unique continuation, stability estimate
    
    \medskip
    
       \noindent \textbf{2010 AMS Subject Classifications.} 35B60, 35K10, 93C20
\vskip 10pt

\tableofcontents

\section{Introduction and main results}\label{sec_intro}

  Let $\Omega\subset\mathbb{R}^N$ ($N\geq 3$) be a bounded domain  with a
     $C^{1,1}$ boundary $\partial \Omega$ and such that $0\in \Omega$. 
For any $T>0$, consider the   following parabolic equation with time-independent coefficients and homogeneous conormal Neumann boundary condition 
\begin{equation}\label{yu-6-24-1}
\begin{cases}
	u_t-\mbox{div}(A(x)\nabla u)+b(x)u=0&\mbox{in}\;\;\Omega\times(0,T),\\
	A\nabla u\cdot\nu=0\;\;&\mbox{on}\;\;\partial\Omega\times(0,T),\\
	u(\cdot,0)=u_0\in L^2(\Omega),
\end{cases}
\end{equation}
where $\nu$ is the exterior unit normal vector on $\partial\Omega$,  the  symmetric matrix-valued function $A: \overline\Omega\rightarrow \mathbb{R}^{N\times N}$ is  Lipschitz continuous and satisfies the uniform ellipticity condition, i.e.,
	  there is a constant $\Lambda_1>1$ such that
\begin{equation}\label{yu-11-28-2}
\begin{cases}
	|a_{ij}(x)-a_{ij}(y)|\leq \Lambda_1|x-y|\;\;\mbox{for all}\;\;x,y\in\Omega\;\;\mbox{and each}\;\;i,j=1,\cdots, N,\\
    \Lambda_1^{-1}|\xi|^2\leq A(x)\xi\cdot\xi \leq \Lambda_1|\xi|^2\;\;\mbox{for a.e.}\;x\in\Omega\;\;\mbox{and all}\;\;\xi\in\mathbb{R}^N,
\end{cases}
\end{equation}
the unbounded potential $b(\cdot)$ verifies one of the following two assumptions: 
\begin{equation}\label{yu-6-24-1-1-b}
\begin{cases}
(i)~~\|b(\cdot)\|
_{L^{N+\delta}(\Omega)}\leq\Lambda_2\;\;\text{for some} \;\delta>0; \\
(ii)~~|b(x)|\leq \dfrac{\Lambda_2}{|x|}\;\;\;\;\mbox{for a.e.}\; x\in\Omega
\end{cases}
\end{equation}
with  $\Lambda_2>0$.

The first goal of the present paper is to establish a H\"older-type interpolation inequality at one time point for all solutions $u$ to \eqref{yu-6-24-1}.  Roughly speaking, for any $t>0$, there exist  constants $C>0$ and $\theta\in (0,1)$ such that
$$
\|u(\cdot,t)\|_{L^2(\Omega)}\leq C\|u(\cdot,t)\|_{L^2(B_R(x_0)\cap\Omega)}^{\theta}\|u_0\|_{L^2(\Omega)}^{1-\theta}\quad\text{for all}\;\;u_0\in L^2(\Omega).
$$
 Such a kind of interpolation inequality have been established 
for solutions of parabolic equations  either in convex bounded domains or in bounded $C^2$-smooth domains but with homogeneous Dirichlet boundary conditions; See for instance \cite{Bardos-Phung, Phung-2017,  Phung-Wang-2010, Phung-Wang-2013, Phung-Wang-Zhang,  Zhang-2017}.   In these papers, the approach for 
the desired interpolation inequality is mainly based on the parabolic-type Almgren frequency function method, which is essentially adapted from \cite{Escauriaza-Fernandez-Vessella-2006,Poon}.  

The second goal of this paper is to deduce an observability inequality from measurable sets in time.
This can be immediately obtained from the above-mentioned interpolation inequality combined with
the telescoping series method developed in \cite{Apraiz-Escauriaza-Wang-Zhang, Phung-Wang-2013}.

More precisely,  the main results of this paper can be stated as follows.

\begin{theorem}\label{jiudu4}
Let $T>0$ and $\omega\subset \Omega$ be a non-empty open subset. Then  there are constants $C=C(\Lambda_1,\Lambda_2,N,\delta,\Omega,\omega)>0$ and $\sigma=\sigma(\Lambda_1,\Lambda_2,N,\delta,\Omega,\omega)\in(0,1)$ such that 
	for any solution $u$ of (\ref{yu-6-24-1}) with the initial value $u_0\in L^2(\Omega)$,
	\begin{equation}\label{yu-7-12-1}
	\|u(\cdot,t_0)\|_{L^2(\Omega)}\leq Ce^{\frac{C(T^2+1)}{t_0}}\|u(\cdot,t_0)\|_{L^2(\omega)}^\sigma\|u_0\|_{L^2(\Omega)}^{1-\sigma}\;\;\mbox{for all}\;\;t_0\in (0,T).
\end{equation}
\end{theorem}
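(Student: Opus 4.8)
The plan is to derive \eqref{yu-7-12-1} from a quantitative unique continuation estimate for an associated elliptic operator, via a spectral reduction that exploits the time‑independence of $A$ and $b$. Let $\{\lambda_k,\phi_k\}_{k\ge 1}$ be the eigenvalues and $L^2(\Omega)$‑orthonormal eigenfunctions of $\mathcal L u=-\mathrm{div}(A\nabla u)+bu$ with the conormal Neumann condition; under either alternative in \eqref{yu-6-24-1-1-b} one has, by Sobolev embedding (case (i)) or the Hardy inequality (case (ii)), $\int_\Omega |b|\,\varphi^2\le \epsilon\|\nabla\varphi\|_{L^2(\Omega)}^2+C_\epsilon\|\varphi\|_{L^2(\Omega)}^2$, so the form of $\mathcal L$ is bounded below, $\lambda_k\to+\infty$, the semigroup obeys $\|u(\cdot,s)\|_{L^2(\Omega)}\le e^{Cs}\|u_0\|_{L^2(\Omega)}$, and, writing $u_0=\sum_k a_k\phi_k$, the solution is $u(\cdot,t)=\sum_k a_k e^{-\lambda_k t}\phi_k$. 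Fix $t_0\in(0,T)$ and set, for $(x,y)\in\Omega\times\mathbb R$,
\[
w(x,y)=\sum_k a_k e^{-\lambda_k t_0}\cosh\!\big(\sqrt{\lambda_k}\,y\big)\phi_k(x),
\]
with the convention $\cosh(\sqrt{\lambda}\,y)=\cos(\sqrt{-\lambda}\,y)$ when $\lambda<0$ (only finitely many indices). One checks that $w$ solves the uniformly elliptic equation $\mathrm{div}(A\nabla w)+\partial_{yy}w-bw=0$ in $\Omega\times\mathbb R$ with the conormal Neumann condition on $\partial\Omega\times\mathbb R$, that $w$ is even in $y$ with $w(\cdot,0)=u(\cdot,t_0)$ and $\partial_y w(\cdot,0)=0$, and that the elementary bound $e^{-2\lambda t_0}\cosh^2(\sqrt{\lambda}\,y)\le e^{y^2/t_0}e^{-\lambda t_0}$ gives, for any $a>0$,
\[
\|w\|_{L^2(\Omega\times(-a,a))}\le C e^{Ca^2/t_0}\,\|u(\cdot,t_0/2)\|_{L^2(\Omega)}\le C e^{C(a^2+T^2)/t_0}\,\|u_0\|_{L^2(\Omega)},
\]
where the last inequality uses the energy estimate together with $t_0<T$ (so $T\le T^2/t_0$).

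Next I would invoke the stability estimates for the elliptic operator $\mathrm{div}(A\nabla\,\cdot)+\partial_{yy}-b$ established earlier in the paper — a Caccioppoli inequality, a three‑region (propagation of smallness) inequality through the connected cylinder, and a quantitative Cauchy‑uniqueness estimate from a piece of the hyperplane $\{y=0\}$, all up to the conormal Neumann lateral boundary — applied to $w$ on a cylinder $\Omega\times(-a_0,a_0)$ with $a_0=a_0(\Omega,\omega)$ fixed. Since $w$ is even in $y$, its full Cauchy data on $\Omega\times\{0\}$ equals $(u(\cdot,t_0),0)$, so the partial knowledge of $w$ on $\omega\times\{0\}$ propagates off the hyperplane and then throughout $\Omega\times(-a_0,a_0)$; after an elementary interpolation $\|w(\cdot,0)\|_{H^{1/2}(\omega)}\le C\|w\|_{L^2(\Omega\times(-a_0,a_0))}^{1/2}\|u(\cdot,t_0)\|_{L^2(\omega)}^{1/2}$ to put the boundary datum into its $L^2(\omega)$ norm, this produces
\[
\|w\|_{L^2(\Omega\times(-a_0/2,\,a_0/2))}\le C\,\|u(\cdot,t_0)\|_{L^2(\omega)}^{\beta}\,\|w\|_{L^2(\Omega\times(-a_0,a_0))}^{1-\beta}
\]
with $\beta\in(0,1)$ depending only on the structural constants. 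Finally, interior elliptic regularity and a trace estimate (using the Neumann condition laterally and evenness in $y$) give $\|u(\cdot,t_0)\|_{L^2(\Omega)}=\|w(\cdot,0)\|_{L^2(\Omega)}\le C\|w\|_{L^2(\Omega\times(-a_0/2,a_0/2))}$. Chaining the three displays with $a=a_0$ and absorbing powers of $e^{C(a_0^2+T^2)/t_0}$ into a single constant $e^{C(T^2+1)/t_0}$ yields \eqref{yu-7-12-1} with $\sigma=\beta$.

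The main obstacle is precisely the elliptic stability input in this degenerate/singular setting, which is where the "modified" reduction is needed. One must prove quantitative unique continuation (via Carleman or frequency‑function estimates) for $\mathrm{div}(A\nabla\,\cdot)+\partial_{yy}-b$ that (a) tolerates the unbounded potential $b\in L^{N+\delta}(\Omega)$, respectively the Hardy‑type singularity $|b(x)|\le\Lambda_2/|x|$ at the interior point $0\in\Omega$ — which forces Carleman weights and sub‑ellipticity estimates compatible with the Hardy inequality and a careful treatment of the region near $x=0$ — and (b) holds up to the conormal Neumann boundary of a merely $C^{1,1}$ domain, where one cannot reflect across $\partial\Omega$ and must instead localize at boundary points, flatten the boundary by a $C^{1,1}$ change of variables (which preserves only Lipschitz continuity and uniform ellipticity of the leading coefficients, not smoothness), and run boundary Carleman estimates with the Neumann term as a lower‑order boundary contribution. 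Tracking the constants through this boundary localization, the spectral reduction, and the normalization of the cylinder is what ultimately delivers the explicit weight $Ce^{C(T^2+1)/t_0}$, the only non‑routine bookkeeping in the argument.
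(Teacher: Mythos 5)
Your reduction step is sound and is genuinely different from the paper's: because you work on all of $\Omega$ at once with the Neumann eigenbasis of $-\mbox{div}(A\nabla\cdot)+b$, you can build the exact even extension $w(x,y)=\sum_k a_ke^{-\lambda_k t_0}\cosh(\sqrt{\lambda_k}\,y)\phi_k(x)$ with Cauchy data $(u(\cdot,t_0),0)$ on $\{y=0\}$, and your bound $e^{-2\lambda t_0}\cosh^2(\sqrt{\lambda}\,y)\le e^{y^2/t_0}e^{-\lambda t_0}$ together with the energy estimate gives the weight $e^{C(T^2+1)/t_0}$ correctly. The paper cannot use this global spectral extension because it deliberately works locally (on balls, and on reflected boundary charts): there the lateral data is nonzero, so it splits the solution into a piece with zero initial data, handled by a sinh-weighted Fourier transform in time (Lemma \ref{yu-lemma-6-18-1}), plus a piece with zero lateral data expanded in local Dirichlet eigenfunctions, and only then forms the elliptic extension \eqref{yu-7-5-9}. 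So far your route is a legitimate, arguably cleaner, alternative.

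The genuine gap is the elliptic input you invoke. You assert that a Caccioppoli inequality, a three-region propagation estimate, and a quantitative Cauchy-uniqueness estimate \emph{up to the conormal Neumann lateral boundary} of the cylinder $\Omega\times(-a_0,a_0)$ are ``established earlier in the paper,'' and then you yourself flag proving them as ``the main obstacle.'' In fact the paper's stability estimate (Lemma \ref{yu-proposition-7-1-1}) and three-ball inequality (Lemma \ref{yu-lemma-7-4-2}) are strictly interior statements on balls $B_r(x_0,0)$ with $\triangle_R(x_0)\subset\Omega$; no boundary Carleman estimate for the $(N+1)$-dimensional operator is proved anywhere. A chain of interior three-ball inequalities only propagates smallness to compact subsets of the cylinder, so your step from the local Cauchy estimate near $\omega\times\{0\}$ to $\|w\|_{L^2(\Omega\times(-a_0/2,a_0/2))}$ (and likewise the final trace bound) is unproved precisely in the boundary region, which is the hardest part of the theorem. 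Note also that your remark that ``one cannot reflect across $\partial\Omega$'' is not right: because the boundary condition is conormal, after a $C^{1,1}$ flattening one has $\tilde a_{jN}(y',0)=0$ (see \eqref{du8101}), so an even reflection keeps the coefficients Lipschitz; this is exactly how the paper treats $\partial\Omega$ (at the parabolic level, in Proposition \ref{yu-theorem-7-10-6}), and it is why Proposition \ref{yu-theorem-7-5-1} and the elliptic lemmas are proved with a Lipschitz coefficient $l$ in front of $\partial_t$ (resp. $\partial_{x_{N+1}x_{N+1}}$, as in \eqref{yu-6-23-9}). Your argument could be repaired either by proving the boundary Carleman/stability estimates you invoke, or by flattening and evenly reflecting (the parabolic equation, or your extension $w$ in the $x$-variables, which produces the coefficient $l=\det J\Phi$) so that the interior lemmas suffice; as written, the proposal defers rather than closes this step, so it does not yet prove \eqref{yu-7-12-1}.
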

\begin{remark}
	In \cite{Phung-Wang-2013}, the authors have obtained the  global interpolation inequality (\ref{yu-7-12-1}) for the heat equation with zero Dirichlet boundary condition and $L^\infty(0,T;L^p(\Omega))$ potential under the assumption $p>N$. 
	This is coincident with the assumption (i) in (\ref{yu-6-24-1-1-b}). However, in view of the electric potential $O(|x|^{-1})$, one could see that the assumption $p>N$ is not optimal (see also \cite{Zhang-2017}). 
\end{remark}


\begin{theorem}\label{yu-main-1}
Assume $\omega\subset\Omega$ is a non-empty open subset. Let $T>0$  and $E\subset [0,T]$ be a subset of positive measure. Then there is a constant $C=C(\Lambda_1,\Lambda_2,N,\delta,\Omega,\omega,T,E)>0$ such that for any solution $u$ of 
(\ref{yu-6-24-1}) with the initial value $u_0\in L^2(\Omega)$,
\begin{equation}\label{yu-6-24-2}
	\|u(\cdot,T)\|_{L^2(\Omega)}\leq C\int_E\|u(\cdot,t)\|_{L^2(\omega)}dt.
\end{equation}
	In particular, when $E=[0,T]$, the constant $C$ in the above inequality can be taken the form 
	$$C(\Lambda_1,\Lambda_2,N,\delta,\Omega,\omega)e^{\frac{C(\Lambda_1,\Lambda_2,N,\delta,\Omega,\omega)(T^2+1)}{T}}.$$
\end{theorem}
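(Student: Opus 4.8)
The plan is to deduce \eqref{yu-6-24-2} from the one‑point interpolation inequality of Theorem~\ref{jiudu4} via the telescoping series argument of \cite{Apraiz-Escauriaza-Wang-Zhang, Phung-Wang-2013}. The first step is a time‑translation remark: since the coefficients $A$ and $b$ in \eqref{yu-6-24-1} do not depend on time, for $0\le a<c\le T$ the function $v(x,s):=u(x,a+s)$ solves the same problem on $\Omega\times(0,c-a)$ with datum $u(\cdot,a)$, so Theorem~\ref{jiudu4} applied to $v$ gives, for all $a<b<c\le T$,
\begin{equation}\label{plan-translate}
\|u(\cdot,b)\|_{L^2(\Omega)}\le Ce^{\frac{C((c-a)^2+1)}{b-a}}\,\|u(\cdot,b)\|_{L^2(\omega)}^{\sigma}\,\|u(\cdot,a)\|_{L^2(\Omega)}^{1-\sigma},
\end{equation}
with the \emph{same} constants $C$ and $\sigma$ as in Theorem~\ref{jiudu4}, \emph{in particular independent of $T$} (this $T$‑independence of $C,\sigma$ is exactly what makes the telescoping work). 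I will also use the forward energy estimate $\|u(\cdot,t)\|_{L^2(\Omega)}\le e^{C(t-s)}\|u(\cdot,s)\|_{L^2(\Omega)}$ for $0\le s\le t\le T$, obtained by testing \eqref{yu-6-24-1} with $u$, absorbing $\int bu^2$ by the Sobolev inequality under assumption~(i) and by Hardy's inequality under assumption~(ii), and then applying Gronwall's lemma.

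Next I fix a Lebesgue density point $\ell\in(0,T)$ of $E$ and a ratio $\lambda\in(0,1)$ sufficiently close to $1$ (how close depends only on $\sigma$). Using that $E$ has density $1$ at $\ell$, one chooses $\ell_1\in(\ell,T)$ so that the decreasing sequence $\ell_m:=\ell+\lambda^{m-1}(\ell_1-\ell)$ and the intervals $I_m:=(\ell_{m+1},\ell_m)$ satisfy $|E\cap I_m|\ge\frac13|I_m|$ for every $m\ge1$ (the standard density‑point lemma of \cite{Apraiz-Escauriaza-Wang-Zhang, Phung-Wang-2013}). For each $m\ge1$ and each $\tau\in E\cap I_m$, I apply \eqref{plan-translate} on the interval $(\ell_{m+2},\ell_m)$ at the point $\tau$ (note $\ell_{m+2}<\ell_{m+1}<\tau<\ell_m$), bound $(\ell_m-\ell_{m+2})^2+1\le T^2+1$ and $\tau-\ell_{m+2}\ge\ell_{m+1}-\ell_{m+2}=\lambda^{m}(1-\lambda)(\ell_1-\ell)$, use the forward energy estimate in the form $\|u(\cdot,\ell_m)\|_{L^2(\Omega)}\le e^{CT}\|u(\cdot,\tau)\|_{L^2(\Omega)}$, split the product by Young's inequality to peel off a term $\varepsilon_m\|u(\cdot,\ell_{m+2})\|_{L^2(\Omega)}$, and finally average in $\tau$ over $E\cap I_m$. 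Writing $a_m:=\|u(\cdot,\ell_m)\|_{L^2(\Omega)}$ and $\mu_m:=\int_{E\cap I_m}\|u(\cdot,\tau)\|_{L^2(\omega)}\,d\tau$, this yields a recursion
\[
a_m\le \varepsilon_m\,a_{m+2}+F_m\,\varepsilon_m^{-\frac{1-\sigma}{\sigma}}\,\mu_m,\qquad m\ge1,
\]
where $F_m>0$ is of size $\exp\!\big(CK\lambda^{-m}\big)$ with $K:=(T^2+1)/(\ell_1-\ell)$.

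Iterating the recursion along the odd indices gives, for every $k\ge1$,
\[
a_1\le\Big(\prod_{j=0}^{k-1}\varepsilon_{2j+1}\Big)a_{2k+1}+\sum_{i=0}^{k-1}\Big(\prod_{j=0}^{i-1}\varepsilon_{2j+1}\Big)F_{2i+1}\,\varepsilon_{2i+1}^{-\frac{1-\sigma}{\sigma}}\,\mu_{2i+1},
\]
and the heart of the argument is to choose $\varepsilon_m=e^{-B\lambda^{-m}}$ with $B$ large (depending on $\lambda$, $\sigma$, $K$) so that every coefficient $\big(\prod_{j<i}\varepsilon_{2j+1}\big)F_{2i+1}\varepsilon_{2i+1}^{-(1-\sigma)/\sigma}$ is bounded uniformly in $i$; this is possible precisely because $\lambda$ has been taken close enough to $1$ that the geometric sum $\sum_{j<i}\lambda^{-(2j+1)}$ overwhelms $\big(CK+\frac{1-\sigma}{\sigma}B\big)\lambda^{-(2i+1)}$ once $B$ is large. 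Since $\prod_{j=0}^{k-1}\varepsilon_{2j+1}\to0$ while $a_{2k+1}\le e^{CT}\|u_0\|_{L^2(\Omega)}$ stays bounded, letting $k\to\infty$ and summing over the pairwise disjoint sets $E\cap I_{2i+1}\subset(\ell,\ell_1)\subset(0,T)$ yields $a_1\le C\int_E\|u(\cdot,\tau)\|_{L^2(\omega)}\,d\tau$; a last application of the forward energy estimate from $\ell_1$ to $T$ gives \eqref{yu-6-24-2}. For the case $E=[0,T]$ one takes $\ell=0$ and $\ell_1=T$, so that $I_m\subset E$ forces $|E\cap I_m|=|I_m|$ (no loss of a factor $\frac13$), $\lambda$ may be fixed depending only on $\sigma$, $K=(T^2+1)/T$ and $B\sim K$; carrying these explicit dependencies through and absorbing every polynomial‑in‑$T$ and $e^{CT}$ factor into $\exp(C(T^2+1)/T)$ gives the announced form $Ce^{C(T^2+1)/T}$. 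The one genuinely delicate point is the coupled calibration of $\lambda$ and of the sequence $\varepsilon_m$ in the recursion: it must outrun the doubly‑exponential growth of $F_m$, which — as already noted — is feasible only because Theorem~\ref{jiudu4} delivers an exponent $\sigma$ that does not degenerate as $T\to\infty$.
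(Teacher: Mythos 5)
Your proposal is correct and follows essentially the same route as the paper: time-translating the one-point interpolation inequality of Theorem \ref{jiudu4}, combining it with the forward $L^2$ energy estimate and the density-point lemma of \cite{Phung-Wang-2013}, and running the telescoping series argument of \cite{Apraiz-Escauriaza-Wang-Zhang, Phung-Wang-2013}. The only difference is bookkeeping: the paper fixes $\epsilon=e^{-\eta z^{m+2}}$ and $z=\sqrt{(2+\gamma)/(1+\gamma)}$ so the even-indexed sum telescopes exactly, whereas you iterate the recursion with $\varepsilon_m=e^{-B\lambda^{-m}}$ and bound the coefficients uniformly, which amounts to the same calibration (your condition on $\lambda$ versus $\sigma$ matches the paper's choice of $z$ with $\gamma=(1-\sigma)/\sigma$).
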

\bigskip
It follows from the classical Hilbert uniqueness method (HUM) that (see, e.g., \cite{Apraiz-Escauriaza-Wang-Zhang})
\begin{corollary}
Let $T>0$. Assume $\omega\subset\Omega$ is a nonempty open subset and $E\subset [0,T]$ is a subset of positive measure. Then, for any $u_0\in L^2(\Omega)$, there is a control $f\in L^\infty(0,T;L^2(\Omega))$, with 
$$\|f\|_{L^\infty(0,T;L^2(\Omega))}\leq C\|u_0\|_{L^2(\Omega)} \quad\text{for  the same constant}\;C\;\mbox{appeared in (\ref{yu-6-24-2})},$$
such that the solution of 
\begin{equation*}
\begin{cases}
	u_t-\mbox{div}(A(x)\nabla u)+b(x)u=\chi_{E\times \omega}f    &\mbox{in}\;\;\Omega\times(0,T),\\
	A\nabla u\cdot\nu=0&\mbox{on}\;\;\partial\Omega\times(0,T),\\
	u(\cdot,0)=u_0\in L^2(\Omega)
\end{cases}
\end{equation*}
satisfies $u(x,T)=0$ for a.e. $x\in\Omega$. 
\end{corollary}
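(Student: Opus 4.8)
The plan is to derive this corollary from the observability inequality \eqref{yu-6-24-2} of Theorem~\ref{yu-main-1} by the standard duality argument underlying the Hilbert uniqueness method; no further PDE analysis is needed, and everything is close to \cite{Apraiz-Escauriaza-Wang-Zhang}. Put $Lu:=-\mathrm{div}(A\nabla u)+bu$. Since $A$ is symmetric and the boundary condition $A\nabla u\cdot\nu=0$ is the conormal one, $L$ together with this Neumann condition is (formally) self-adjoint in $L^2(\Omega)$, so the adjoint of \eqref{yu-6-24-1} is the backward problem
\[
-\varphi_t-\mathrm{div}(A\nabla\varphi)+b\varphi=0\ \text{in}\ \Omega\times(0,T),\quad A\nabla\varphi\cdot\nu=0\ \text{on}\ \partial\Omega\times(0,T),\quad \varphi(\cdot,T)=\varphi_T\in L^2(\Omega).
\]
The first step is to transfer \eqref{yu-6-24-2} to this adjoint equation: under the time reversal $\psi(\cdot,t):=\varphi(\cdot,T-t)$ the function $\psi$ solves \eqref{yu-6-24-1} with initial datum $\varphi_T$, so applying \eqref{yu-6-24-2} to $\psi$ with the measurable time set $T-E$ (of the same measure as $E$) gives the dual observability estimate
\[
\|\varphi(\cdot,0)\|_{L^2(\Omega)}\le C\int_E\|\varphi(\cdot,t)\|_{L^2(\omega)}\,dt\qquad\text{for every}\ \varphi_T\in L^2(\Omega),
\]
with $C$ the constant of \eqref{yu-6-24-2}; when $E=[0,T]$ the time reversal changes nothing, so this $C$ is exactly the explicit constant in Theorem~\ref{yu-main-1}.

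The second step is the duality argument. Pairing \eqref{yu-6-24-1} driven by the source $\chi_{E\times\omega}f$ with the adjoint solution $\varphi$ and integrating by parts (using the self-adjointness of $L$) yields the identity
\[
\langle u(\cdot,T),\varphi_T\rangle_{L^2(\Omega)}-\int_\Omega u_0\,\varphi(\cdot,0)\,dx=\int_0^T\langle\chi_{E\times\omega}f(\cdot,t),\varphi(\cdot,t)\rangle_{L^2(\Omega)}\,dt ,
\]
so $u(\cdot,T)=0$ is equivalent to requiring $\int_0^T\langle\chi_{E\times\omega}f(\cdot,t),\varphi(\cdot,t)\rangle_{L^2(\Omega)}\,dt=-\int_\Omega u_0\,\varphi(\cdot,0)\,dx$ for all $\varphi_T\in L^2(\Omega)$. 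Consider the subspace $V:=\{\chi_{E\times\omega}\varphi:\varphi_T\in L^2(\Omega)\}$ of $L^1(0,T;L^2(\Omega))$, with $\varphi$ the adjoint solution carrying terminal datum $\varphi_T$; note $\|\chi_{E\times\omega}\varphi\|_{L^1(0,T;L^2(\Omega))}=\int_E\|\varphi(\cdot,t)\|_{L^2(\omega)}\,dt$. By the dual observability estimate the assignment $\chi_{E\times\omega}\varphi\mapsto-\int_\Omega u_0\,\varphi(\cdot,0)\,dx$ is well defined on $V$ (two adjoint solutions agreeing a.e.\ on $E\times\omega$ have, by the estimate applied to their difference, the same value at $t=0$) and is a linear functional of norm at most $C\|u_0\|_{L^2(\Omega)}$, since $\big|\int_\Omega u_0\,\varphi(\cdot,0)\,dx\big|\le\|u_0\|_{L^2(\Omega)}\|\varphi(\cdot,0)\|_{L^2(\Omega)}\le C\|u_0\|_{L^2(\Omega)}\int_E\|\varphi(\cdot,t)\|_{L^2(\omega)}\,dt$. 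By the Hahn--Banach theorem it extends to a bounded linear functional on $L^1(0,T;L^2(\Omega))$ of the same norm, hence --- $L^2(\Omega)$ being reflexive, so that $\big(L^1(0,T;L^2(\Omega))\big)^*=L^\infty(0,T;L^2(\Omega))$ --- to some $g\in L^\infty(0,T;L^2(\Omega))$ with $\|g\|_{L^\infty(0,T;L^2(\Omega))}\le C\|u_0\|_{L^2(\Omega)}$ and $\int_0^T\langle g(\cdot,t),\chi_{E\times\omega}\varphi(\cdot,t)\rangle_{L^2(\Omega)}\,dt=-\int_\Omega u_0\,\varphi(\cdot,0)\,dx$ for every $\varphi_T$. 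Taking $f:=\chi_{E\times\omega}g\in L^\infty(0,T;L^2(\Omega))$ --- which is supported in $E\times\omega$, so $\chi_{E\times\omega}f=f$ and $\|f\|_{L^\infty(0,T;L^2(\Omega))}\le\|g\|_{L^\infty(0,T;L^2(\Omega))}\le C\|u_0\|_{L^2(\Omega)}$ --- we get $\int_0^T\langle\chi_{E\times\omega}f(\cdot,t),\varphi(\cdot,t)\rangle_{L^2(\Omega)}\,dt=\int_0^T\langle g(\cdot,t),\chi_{E\times\omega}\varphi(\cdot,t)\rangle_{L^2(\Omega)}\,dt=-\int_\Omega u_0\,\varphi(\cdot,0)\,dx$ for all $\varphi_T$, so the displayed identity forces $\langle u(\cdot,T),\varphi_T\rangle_{L^2(\Omega)}=0$ for every $\varphi_T\in L^2(\Omega)$, i.e.\ $u(\cdot,T)=0$. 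This is the asserted control, with the constant $C$ of \eqref{yu-6-24-2}, which for $E=[0,T]$ equals $C(\Lambda_1,\Lambda_2,N,\delta,\Omega,\omega)e^{C(\Lambda_1,\Lambda_2,N,\delta,\Omega,\omega)(T^2+1)/T}$.

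The argument is soft once Theorem~\ref{yu-main-1} is available; the only points that require a little care are the justification of the integration-by-parts identity for the weak solutions of \eqref{yu-6-24-1} in the presence of the singular or unbounded potential $b$ --- handled within the well-posedness framework already set up for \eqref{yu-6-24-1} --- and the representation $\big(L^1(0,T;L^2(\Omega))\big)^*=L^\infty(0,T;L^2(\Omega))$, which is the (trivial) place where reflexivity of $L^2(\Omega)$ enters and which is exactly what makes the \emph{$L^1$-in-time} observability \eqref{yu-6-24-2}, rather than a cruder $L^2$-in-time version, deliver a control in $L^\infty(0,T;L^2(\Omega))$. In other words, the genuine obstacle was proving the sharp inequality \eqref{yu-6-24-2}; the passage to controllability is then routine, exactly as in \cite{Apraiz-Escauriaza-Wang-Zhang}.
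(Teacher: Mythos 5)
Your proposal is correct and follows essentially the route the paper intends: the paper gives no proof of this corollary, only the citation to the HUM/duality argument of \cite{Apraiz-Escauriaza-Wang-Zhang}, and your time-reversal of the observability inequality, the duality identity, and the Hahn--Banach extension in $L^1(0,T;L^2(\Omega))$ with dual $L^\infty(0,T;L^2(\Omega))$ is exactly that standard argument. The only (harmless) imprecision is that the time reversal produces the observability constant associated with the reflected set $T-E$ rather than with $E$ itself, which is the same level of looseness as the corollary's phrase ``the same constant $C$''.
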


\par

 The interpolation inequality (\ref{yu-7-12-1})  at one time point in Theorem \ref{jiudu4} is a quantitative form 
  of  strong unique continuation for the equation (\ref{yu-6-24-1}). The study of unique continuation
  property for parabolic equation has a long history. 
  For the works in this topic, one can see 
  \cite{Adolfsson-Escauriaza, Alessandrini-Vessella, Canuto-Rosset-Vessella,  Chen-1996,  Escauriaza-2000,  
  Escauriaza-Fernandez-Vessella-2006, Escauriaza-Montaner-Zhang, Escauriaza-Vessella-2003,  Ito-Yamabe-1958,   
 Lin-1990,   Lin-1991, Poon,  Vessella-2009, Yamabe-1959} and references therein. Among these papers, it is worth mentioning particularly \cite{Lin-1990} and 
  \cite{Canuto-Rosset-Vessella}.   In the paper \cite{Lin-1990}, F. H. Lin showed the strong unique continuation property for
  the equation (\ref{yu-6-24-1}) when the potential $b(\cdot)\in L_{loc}^{(N+1)/2}(\Omega)$. Although it is a qualitative
  form of unique continuation,  F. H. Lin constructed  an important and smart strategy that deduces a strong unique continuation of 
  parabolic equations with time-independent coefficients to the elliptic counterparts. 
    Later, by following and quantifying this strategy, 
  B. Canuto, E. Rosset and S. Vessella proved  in \cite{Canuto-Rosset-Vessella} the 
  local quantitative unique continuation for time-independent parabolic equations but without potentials (i.e.,  $b(\cdot)=0$).
   It seems to us that the results in \cite{Canuto-Rosset-Vessella} are not enough to derive
   the interpolation inequality in Theorem \ref{jiudu4}; See more discussions in Remark \ref{d1} below. Further, 
  the presence of potential term will lead to some difficulties if one follows the same argument used in \cite{Canuto-Rosset-Vessella}. These difficulties force us to slightly improve the strategy used by B. Canuto, E. Rosset and S. Vessella  (see Section \ref{dujin1} below).

When the boundary condition  in \eqref{yu-6-24-1} is homogeneous Dirichlet-type,  through using the frequency function method, the global interpolation inequality in Theorem \ref{jiudu4}  has been studied in \cite{Bardos-Phung, Phung-2017, Phung-Wang-2010, Phung-Wang-2013, Phung-Wang-Zhang, Zhang-2017}. However,
to the best of our knowledge, this approach seems to be not applicable for the case of homogeneous Neumann boundary condition (at least we do not know). This forces us to find a new method to obtain the corresponding interpolation inequality.

      In order to overcome  these difficulties mentioned above, in this paper we shall adopt and slightly modify the reduction method, as well as Carleman estimates of elliptic operators. Roughly speaking, the reduction method \cite{Lin-1990}  is to 
       reduce 
a parabolic equation into an elliptic equation by using the Fourier transformation and adding one more spatial variable. However, 
      because of the appearance of potential term, we shall adopt a sinh-type weighted  Fourier transformation, which is slightly  different to 
      the strategy used in \cite{Lin-1990,Canuto-Rosset-Vessella}. 
      Moreover, 
      for the proof of stability estimate (see Lemma \ref{yu-proposition-7-1-1} below), 
      the authors of \cite{Canuto-Rosset-Vessella} reduced the elliptic equation to a hyperbolic equation and used harmonic measure.  This strategy, in our opinion, cannot be applied when the potential is nonzero.   Instead, in this paper we shall use suitable Carleman estimates
      to deduce the corresponding stability estimate. 
      Note that the reduction method is based on a representation formula for solutions of parabolic equations in terms 
of eigenfunctions of the corresponding elliptic operators, and therefore cannot be applied to 
general parabolic equations with time-dependent coefficients.  

We emphasis   that in the case  of heat equation with homogeneous Dirichlet boundary conditions, the authors in \cite{Apraiz-Escauriaza-Wang-Zhang} first observed  that the observability estimate at one time point is in fact equivalent to 
  a type of spectral inequality in \cite{Lebeau-Robbiano-1998} (see also \cite{Phung-2017}).  This type of spectral inequality, roughly speaking,  is an observability inequality from a partial region on the finite sum of eigenfunctions of the principal elliptic operator. 
  For related works, we refer the reader to \cite{Chaves-Silva-Lebeau-2016, Rousseau-Robbiano-2012, Lebeau-Zuazua, LU-2013, Miller}  and references therein. 
Therefore, if one could establish a type of spectral inequality as in \cite{Lebeau-Robbiano-1998} (see also \cite{Phung-2017}), the global interpolation inequality can also be deduced by the technique utilized in 
  \cite{Apraiz-Escauriaza-Wang-Zhang}. We refer \cite{LU-2013} for the spectral inequality of elliptic equation with Neumann boundary condition and without any potential term. 
  
Meanwhile, we also refer \cite{Escauriaza-Fernandez-Vessella-2006,Vessella-2009} for quantitative estimates of unique continuation
of parabolic equations with time dependent coefficients, in which some parabolic-type Carleman estimates were established. 
We believe that the Carleman method developed in \cite{Vessella-2009} (or \cite{Escauriaza-Fernandez-Vessella-2006}) may provide a possible approach for proving the corresponding interpolation inequality.
However, this issue escapes the study of the present paper and is deserved to be  investigated  in the continued work.

Last but not least, we would like to stress that 
the observability estimate from measurable sets in the time variable established in  Theorem \ref{yu-main-1} has several 
 applications in control theory. 
 In particular, it
  implies  bang-bang properties of minimal norm and minimal time optimal control problems (see for instance \cite{Phung-Wang-2013,wang-zhang1}).

 \smallskip

        The structure of this paper is organized as follows.   
       In Section \ref{mainproof}, we first present two quantitative estimates of unique continuation needed for proof of the main results,  and then we prove Theorems \ref{jiudu4} and \ref{yu-main-1}, respectively.  In Section \ref{dujin1}, we are devoted to the proofs of the above-mentioned two quantitative estimates of unique continuation. In Appendix, the proofs of some results used in Section \ref{dujin1} are given.

\paragraph{Notation.}Throughout the paper,
$\triangle_R(x_0)$ stands for a ball in $\mathbb R^N$ with the center $x_0$ and of radius $R>0$,
$B_R(x_0,0)$ stands for a ball in $\mathbb R^{N+1}$ with the center $(x_0,0)$ and of radius $R>0$. 
Denote by $\partial\triangle_R(x_0)$ the boundary of $\triangle_R(x_0)$,  by $\rho_0=\sup\{|x-y|:x,y\in\Omega\}$ and $\Omega_\rho=\{x\in\Omega\,:\,d(x,\partial\Omega)\geq\rho$\} with $\rho\in(0,\min\{1,\rho_0\})$.
Write $\bar z$ for the complex conjugate of a complex number $z\in\mathbb C$.
The letter $C$ denotes a generic positive
constant that depends on the a priori data but not on the solution and may vary from line to line.
Moreover, we shall denote by $C(\cdot)$ a positive constant if we need to emphasize the dependence on some parameters in the brackets.

\section{Proofs of main results}\label{mainproof}

\subsection{Unique continuation estimates}
In order to present the proof of Theorem \ref{jiudu4},
we first state two results concerning quantitative estimates of unique continuation: The first one is local, and the second one is global.  Their proofs are postponed to give in Section \ref{dujin1}.

\begin{proposition}\label{yu-theorem-7-5-1}
Let $T>0$. Suppose $\rho\in(0,\min\{1,\rho_0\})$ such that $\Omega_\rho\neq \emptyset$. Then there exist $R\in(0,\rho)$ and $\kappa\in(0,1/2)$ such that for any $r\in (0, \kappa R)$, any $t_0\in(0,T/2)$ and any $x_0\in\Omega_\rho$, we have
\begin{equation*}\label{yu-7-5-12}
	\|u(\cdot,t_0)\|_{L^2(\triangle_{2r}(x_0))}\leq Ce^{\frac{C(T^2+1)}{t_0}}
	\|u(\cdot,t_0)\|_{L^2(\triangle_{r}(x_0))}^{\sigma}\left(\sup_{s\in[0,T]}\|u(\cdot,s)\|_{H^1(\triangle_R(x_0))}\right)^{1-\sigma}
\end{equation*}
with some constants $C=C(\Lambda_1,\Lambda_2, \Lambda_3, N,\delta,r,R)>0$ and $\sigma=\sigma(\Lambda_1,\Lambda_2,\Lambda_3,N,\delta,r)\in(0,1)$, where $u\in C([0,2T];H_{\text{loc}}^1(\Omega))$ satisfies 
\begin{equation}\label{heat capacity}
l(x)\partial_tu-\mbox{div}(A(x)\nabla u)+b(x)u=0\;\;\;\mbox{in}\;\;\Omega\times(0,2T).
\end{equation}	
Here $A$ and $b$ are the same as in \eqref{yu-6-24-1}, and $l:\Omega\rightarrow\mathbb R^+$ verifies  
\begin{equation}\label{yu-7-29-3}
\Lambda_3^{-1}\leq l(x)\leq \Lambda_3,\;\;
	|l(x)-l(y)|\leq \Lambda_3|x-y|
\quad \;\;\mbox{for a.e.}\;\;x,y\in\Omega
\end{equation}
with a constant $\Lambda_3>1$. 
\end{proposition}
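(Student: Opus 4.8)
The plan is to follow and quantify F. H. Lin's reduction strategy, adapted to the potential term via a $\sinh$-type weighted Fourier transform as announced in the introduction. Fix $x_0\in\Omega_\rho$ and $t_0\in(0,T/2)$. Since the equation \eqref{heat capacity} has time-independent coefficients, write the solution as an eigenfunction expansion $u(x,t)=\sum_j a_j e^{-\lambda_j t}\phi_j(x)$, where $\{\phi_j\}$ is the $L^2(\Omega;l\,dx)$-orthonormal basis of eigenfunctions of the operator $w\mapsto l^{-1}(-\mathrm{div}(A\nabla w)+bw)$. The key device is to introduce an auxiliary function of one extra spatial variable $y\in\mathbb{R}$ of the form
\begin{equation*}
  w(x,y)=\sum_j a_j e^{-\lambda_j t_0}\,g(\sqrt{\lambda_j}\,y)\,\phi_j(x),
\end{equation*}
where $g$ is chosen (a $\cosh$/$\sinh$ combination) so that $w$ solves an elliptic equation of the form $\mathrm{div}(A(x)\nabla_x w)-b(x)w + l(x)\partial_{yy}w=0$ (up to lower-order corrections), with $w(x,0)=u(x,t_0)$ and $\partial_y w(x,0)$ controlled. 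The role of the $\sinh$ weight (instead of the Gaussian used when $b=0$) is to absorb the potential and keep the elliptic operator of the right form; the bound $\sup_{s\in[0,T]}\|u(\cdot,s)\|_{H^1(\triangle_R(x_0))}$ furnishes an a priori $H^1$ control of $w$ on a full cylinder $\triangle_R(x_0)\times(-L,L)$ after integrating in $y$, while the factor $e^{C(T^2+1)/t_0}$ comes from bounding $e^{-\lambda_j t_0}g(\sqrt{\lambda_j}y)$ against $\|u_0\|$-type quantities on that $y$-interval (the $\lambda_j\ge 0$ modes grow like $e^{\sqrt{\lambda_j}|y|-\lambda_j t_0}\le e^{|y|^2/(4t_0)}$).

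Second, once $w$ is realized as a solution of a uniformly elliptic equation with Lipschitz principal part and an $L^{N+\delta}$ or Hardy-type potential on $B_R(x_0,0)\subset\mathbb{R}^{N+1}$, I would invoke the elliptic three-ball / quantitative unique continuation inequality — this is exactly Lemma \ref{yu-proposition-7-1-1} as referenced, proved via Carleman estimates for the elliptic operator — to get
\begin{equation*}
  \|w\|_{L^2(B_{2r}(x_0,0))}\le C\,\|w\|_{L^2(B_{r}(x_0,0))}^{\sigma}\,\|w\|_{H^1(B_R(x_0,0))}^{1-\sigma}.
\end{equation*}
The final step is to transfer this back to the $y=0$ slice: the left side controls $\|u(\cdot,t_0)\|_{L^2(\triangle_{2r}(x_0))}$ from below (by trace/Caccioppoli-type arguments, or by noting $w(x,0)=u(x,t_0)$ and using interior estimates to pass from the $(N+1)$-ball norm to the slice), and the right side is bounded above by $\|u(\cdot,t_0)\|_{L^2(\triangle_r(x_0))}$ on the $y$-ball times the cylinder $H^1$ bound; the restriction $r\in(0,\kappa R)$ with small $\kappa$ is what makes the slice-to-ball comparison work and keeps all balls inside the domain of validity. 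Combining these three steps, adjusting $\sigma$, and absorbing constants yields the claimed inequality.

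The main obstacle I expect is the construction and estimation of the auxiliary elliptic function $w$ in the presence of the unbounded potential: one must choose the $y$-profile $g$ so that (a) $w$ genuinely satisfies an elliptic equation to which the Carleman-based Lemma \ref{yu-proposition-7-1-1} applies, including handling the cross terms produced by $l(x)$ and $b(x)$ not commuting with the spectral decomposition cleanly, and (b) the $y$-integrated norms of $w$ on $B_R(x_0,0)$ are controlled by $\sup_s\|u(\cdot,s)\|_{H^1(\triangle_R(x_0))}$ with the stated $e^{C(T^2+1)/t_0}$ dependence and no loss. This is precisely the point where the present approach departs from \cite{Canuto-Rosset-Vessella} (who could use a Gaussian profile and reduce to a hyperbolic equation with harmonic measure when $b=0$), and getting the constants uniform in $x_0\in\Omega_\rho$ and in $t_0\in(0,T/2)$ — rather than just for a single solution — is the delicate bookkeeping part. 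A secondary technical point is justifying the convergence and regularity of the series defining $w$ (using $u\in C([0,2T];H^1_{\mathrm{loc}})$ and smoothing of the parabolic flow for $t>0$), which I would handle by a density/approximation argument on the initial data.
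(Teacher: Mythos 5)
There is a genuine gap at the very first step: you start from a global eigenfunction expansion $u(x,t)=\sum_j a_j e^{-\lambda_j t}\phi_j(x)$, but Proposition \ref{yu-theorem-7-5-1} concerns a purely \emph{local} solution of \eqref{heat capacity} --- no boundary condition on $\partial\Omega$ is imposed (and the proposition is later applied to a solution obtained by reflection across a flattened piece of the boundary, which satisfies no usable global boundary condition at all), so the operator $l^{-1}\left(-\mbox{div}(A\nabla\cdot)+b\right)$ has no self-adjoint realization attached to $u$ and the expansion you build $w$ from does not exist. Even if one grants Neumann conditions as in \eqref{yu-6-24-1}, the spectral construction yields right-hand sides measured in global norms over $\Omega$, whereas the statement requires only $\sup_{s\in[0,T]}\|u(\cdot,s)\|_{H^1(\triangle_R(x_0))}$. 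The paper supplies precisely the missing localization: it splits $u=u_1+u_2$ on $\triangle_R(x_0)$ into the part driven by the (unknown) lateral boundary values and the part driven by the initial datum. The initial-datum part does admit a Dirichlet eigenfunction expansion, but only on the small ball, where coercivity is guaranteed by the choice $R\le R_0$ in \eqref{du8141}; it is extended by the $\sinh$ profile into $V_2$. The boundary-driven part cannot be expanded spectrally at all and is instead treated by a cutoff in time, the exponential decay estimate of Lemma \ref{yu-lemma-6-10-1}, a Fourier transform in $t$, and the quantitative analyticity estimate of Lemma \ref{yu-lemma-6-18-1}, whose decay $e^{-\sqrt{|\mu|}R/(4e\Pi)}$ is exactly what beats the growth of $\sinh(\sqrt{-i\mu}\,y)$ for $|y|<\kappa R$ and makes $V_1$ well defined. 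Your outline contains no substitute for this boundary-data component, which is the technical heart of the proof.

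Two secondary points. First, with time-independent coefficients the extension solves the elliptic equation \emph{exactly}; there are no ``lower-order corrections'' and no non-commutativity of $l,b$ with the spectral decomposition to handle, so that worry is spurious. Second, the paper takes the odd ($\sinh$) profile so that $V(\cdot,0)=0$ and $V_y(\cdot,0)=u(\cdot,t_0)$: in the stability estimate of Lemma \ref{yu-proposition-7-1-1} the Cauchy data then reduce to $f_1=0$, $f_2=u(\cdot,t_0)$, which is what produces the observation term $\|u(\cdot,t_0)\|_{L^2(\triangle_{r}(x_0))}^{\sigma}$ in the $L^2$ norm only. Your even ($\cosh$) choice, with $w(\cdot,0)=u(\cdot,t_0)$, would force the $H^1$ norm of the slice datum into the small-ball factor and would require an additional interpolation to recover the stated inequality; this is repairable, unlike the missing localization above.
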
 

\begin{proposition}\label{yu-theorem-7-10-6}
Let $T>0$ and $\omega\subset\Omega$ be a non-empty open subset. 
Then there  are constants $C=C(\Lambda_1,\Lambda_2,N,\delta,\Omega,\omega)>0$ and  $\sigma=\sigma(\Lambda_1,\Lambda_2,N,\delta,\Omega,\omega)\in(0,1)$ such that for any solution  $u\in C([0,T];H^1(\Omega))$ of \eqref{yu-6-24-1}, we have
\begin{equation*}\label{yu-7-10-2}
	\|u(\cdot,t_0)\|_{L^2(\Omega)}\leq C
	e^{\frac{C(T^2+1)}{t_0}}\|u(\cdot,t_0)\|_{L^2(\omega)}^\sigma 
	\left(\sup_{s\in[0,T]}\|u(\cdot,s)\|_{H^1(\Omega)}\right)^{1-\sigma}\quad \text{for all}\;\; t_0\in(0,T/2).
\end{equation*}
\end{proposition}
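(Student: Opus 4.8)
The strategy is to globalize the local estimate of Proposition~\ref{yu-theorem-7-5-1} by a propagation-of-smallness (chain of balls) argument carried out at the single time $t_0$; the extra ingredient needed is a \emph{boundary} analogue of Proposition~\ref{yu-theorem-7-5-1}, which I would obtain by flattening $\partial\Omega$, straightening the conormal field, and reflecting evenly so as to reduce to the interior statement. Put $M:=\sup_{s\in[0,T]}\|u(\cdot,s)\|_{H^1(\Omega)}$ and $N_\omega:=\|u(\cdot,t_0)\|_{L^2(\omega)}$, so $N_\omega\le M$. Since $u\in C([0,T];H^1(\Omega))$ forces $u_0\in H^1(\Omega)$, the solution of~\eqref{yu-6-24-1} extends to $C([0,\infty);H^1(\Omega))$; hence Proposition~\ref{yu-theorem-7-5-1} (with $l\equiv1$ and its free time parameter taken equal to the present $T$) applies for every $t_0\in(0,T/2)$. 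Fix a closed ball $\overline{\triangle_r(y_0)}\subset\omega$, then $\rho\in(0,\min\{1,\rho_0\})$ with $\rho<d(y_0,\partial\Omega)$, then the $R\in(0,\rho)$ and $\kappa\in(0,1/2)$ supplied by Proposition~\ref{yu-theorem-7-5-1}, shrinking $r$ if needed so that $r<\kappa R$; all these choices depend only on $\Omega$ and $\omega$, and $\triangle_R(x)\subset\Omega$ whenever $x\in\Omega_R$.

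The interior three-ball inequality is Proposition~\ref{yu-theorem-7-5-1} itself. For a \emph{boundary} version near $z\in\partial\Omega$, I would use a $C^{1,1}$ change of variables, post-composed if necessary with a linear straightening of the conormal direction $A\nu$, that flattens $\partial\Omega$ near $z$ to $\{x_N=0\}$ and turns~\eqref{yu-6-24-1} into $\tilde l\,\partial_t v-\mbox{div}(\tilde A\nabla v)+\tilde b\,v=0$ on a half-ball with $\tilde A\nabla v\cdot e_N=0$ on the flat part, where $\tilde A$ is Lipschitz and uniformly elliptic, $\tilde l$ obeys~\eqref{yu-7-29-3}, and $\tilde b$ still satisfies~(i) of~\eqref{yu-6-24-1-1-b} ($L^{N+\delta}$ being preserved up to constants by bi-Lipschitz maps) or is simply bounded near $z$ under~(ii) (since $0\in\Omega$ stays away from $\partial\Omega$). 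Reflecting $v$, $\tilde l$, $\tilde b$ and the tangential--tangential and normal--normal entries of $\tilde A$ evenly across $\{x_N=0\}$, and the mixed tangential--normal entries oddly --- these vanish on $\{x_N=0\}$ by the conormal straightening, so their odd extensions stay Lipschitz --- gives a weak solution of an equation of the form~\eqref{heat capacity} in a full ball; applying Proposition~\ref{yu-theorem-7-5-1} to it and transporting back yields a three-ball inequality straddling $\partial\Omega$, of the same shape as the interior one.

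To globalize, I would chain balls over the connected compact set $\overline\Omega$ from $\triangle_r(y_0)$: interior balls where $\triangle_R$ still fits inside $\Omega$, reflected boundary balls near $\partial\Omega$. When $|z_{j+1}-z_j|\le r$ we have $\triangle_r(z_{j+1})\subset\triangle_{2r}(z_j)$, so the (interior or boundary) three-ball inequality at $z_j$ gives $\|u(\cdot,t_0)\|_{L^2(\triangle_r(z_{j+1}))}\le Ce^{C(T^2+1)/t_0}\|u(\cdot,t_0)\|_{L^2(\triangle_r(z_j))}^{\sigma}M^{1-\sigma}$. Iterating this a uniformly bounded number $k_0=k_0(\Omega,\omega)$ of times, starting from $\|u(\cdot,t_0)\|_{L^2(\triangle_r(y_0))}\le N_\omega$, and noting that the prefactors accumulate to a convergent geometric series while the exponents multiply down to a fixed $\sigma^{k_0}\in(0,1)$ (using $N_\omega\le M$ to lower all exponents to this common value), I get $\|u(\cdot,t_0)\|_{L^2(\triangle_r(x))}\le Ce^{C(T^2+1)/t_0}N_\omega^{\sigma^{k_0}}M^{1-\sigma^{k_0}}$ for each ball $\triangle_r(x)$ of a fixed finite cover of $\overline\Omega$. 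Summing the squares over this cover yields~\eqref{yu-7-10-2} with $\sigma:=\sigma^{k_0}$, the constant $C$ absorbing the (finite) number of balls and the geometric sum and depending only on $\Lambda_1,\Lambda_2,N,\delta,\Omega,\omega$; the factor $e^{C(T^2+1)/t_0}$ enters only through Proposition~\ref{yu-theorem-7-5-1} and survives the finitely many compositions with a larger constant.

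I expect the genuine obstacle to be the boundary three-ball inequality: verifying that the reflection across the conormal-straightened boundary really produces a \emph{weak} solution with \emph{Lipschitz} leading coefficients --- which is precisely why one must first make the mixed entries of the transformed matrix vanish on the flattened boundary --- and with a potential still in the admissible class, so that the resulting problem falls exactly under the hypotheses of Proposition~\ref{yu-theorem-7-5-1}; this, incidentally, is why that proposition is formulated for a variable ``heat capacity'' $l(x)$ obeying~\eqref{yu-7-29-3} rather than directly for~\eqref{yu-6-24-1}. By contrast, the interior chain-of-balls propagation and the accounting of constants and exponents are entirely routine.
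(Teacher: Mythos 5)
Your proposal is correct and follows essentially the same route as the paper: interior propagation of smallness by chaining the local estimate of Proposition \ref{yu-theorem-7-5-1}, plus a boundary reduction by flattening $\partial\Omega$ so that the mixed entries of the transformed matrix vanish on the flat boundary (the paper's condition \eqref{du8101}), even/odd reflection across $\{x_N=0\}$ producing a weak solution of an equation of type \eqref{heat capacity} with the Jacobian determinant as the coefficient $l$, and a finite covering of a boundary neighborhood combined with the interior estimate. In particular, your observation that the variable heat capacity $l$ in Proposition \ref{yu-theorem-7-5-1} exists precisely to accommodate this boundary reflection is exactly the paper's mechanism.
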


\begin{remark}\label{d1}
	The local interpolation inequality established in Proposition \ref{yu-theorem-7-5-1}
	is slightly different from the two spheres and one cylinder inequality established  in \cite[Theorem 3.1.1']{Canuto-Rosset-Vessella}.
Actually, in \cite{Canuto-Rosset-Vessella}, the bound for the parameter $r$ depends 
	on the instant $t_0$. This, however, will lead some difficulties when one applies it to prove the global interpolation  and 
	 observability inequalities. 
	\end{remark}

\begin{remark}
Equations of type \eqref{heat capacity} appear when one transforms the parabolic operator via a linear 
mapping from $\mathbb R^N$ into $\mathbb R^N$.
It is also worth mentioning that parabolic equations of form \eqref{heat capacity} with positive coefficients in front of the time derivative  are much more nature from the physical point of view.  
They model the heat diffusion of the temperature in a non-isotropic and non-homogeneous material.
In fact, there are two relevant physical quantities in heat diffusion processes: the conductivity coefficients and the specific heat capacity. The latter appears in the equation in front of the time derivative.
\end{remark}

\subsection{Proof of Theorem \ref{jiudu4}}

We first recall the following well-known Hardy inequality (see e.g. \cite{Davies}) and Sobolev interpolation theorem (see e.g. \cite[Theorem 5.8]{Adams-Fournier}), which will be used frequently  in our argument below.
\begin{lemma}\label{hardy}
Let $\Omega$ be a bounded Lipschitz domain in $\mathbb R^N$ ($N\geq 3$). Then, it holds that
\vskip 5pt 
 (i) (Hardy's inequality) 
$$
\int_{\Omega}|x|^{-2}|f|^2dx\leq 
\frac{4}{(N-2)^2}\int_{\Omega}|\nabla f|^2dx\quad \mbox{for any}\;\;
f\in H_0^1(\Omega).
$$
\vskip 5pt
	(ii) (Sobolev's interpolation theorem) For each $p\in [2,\frac{2N}{N-2}]$, there is a constant $\Gamma_1(\Omega, N, p)>0$ such that 
$$
	\|f\|_{L^{p}(\Omega)}\leq \Gamma_1(\Omega, N, p)\|f\|_{H^1(\Omega)}^\theta\|f\|_{L^2(\Omega)}^{1-\theta}\;\;\mbox{for any}\;\;
f\in H^1(\Omega).
$$
	where $\theta=N(\frac{1}{2}-\frac{1}{p})$.
\end{lemma}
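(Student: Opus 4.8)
For part (i) the plan is to reduce to smooth functions by density: since $C_c^\infty(\Omega)$ is dense in $H_0^1(\Omega)$, and since once the inequality is known for $f\in C_c^\infty(\Omega)$ it passes to the $H^1$-closure by Fatou's lemma applied to the left-hand side, it suffices to treat $f\in C_c^\infty(\Omega)$ extended by zero to $\mathbb R^N$. The key algebraic fact is the identity $\mbox{div}\!\big(x/|x|^2\big)=(N-2)/|x|^2$ for $x\neq 0$, which uses $N\geq 3$. First I would integrate $\big[(N-2)/|x|^2\big]f^2$ over $\mathbb R^N\setminus\triangle_\varepsilon(0)$ and integrate by parts; the boundary contribution on $\partial\triangle_\varepsilon(0)$ is $O(\varepsilon^{N-2}\|f\|_\infty^2)\to 0$ as $\varepsilon\to 0$, so one obtains $(N-2)\int|x|^{-2}f^2=-2\int|x|^{-2}(x\cdot\nabla f)\,f$. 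Then Cauchy--Schwarz bounds the right side by $2\big(\int|x|^{-2}f^2\big)^{1/2}\big(\int|\nabla f|^2\big)^{1/2}$, and dividing through by $\big(\int|x|^{-2}f^2\big)^{1/2}$ (performed first on $\Omega\setminus\triangle_\varepsilon(0)$ where the integral is manifestly finite, then letting $\varepsilon\to 0$) produces the constant $4/(N-2)^2$.

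For part (ii) the plan is to interpolate between the trivial endpoint $p=2$ and the Sobolev exponent $p=2^\ast:=2N/(N-2)$. At the endpoint I would use the Sobolev embedding $H^1(\Omega)\hookrightarrow L^{2^\ast}(\Omega)$; since $\Omega$ is a bounded Lipschitz domain this follows by composing a bounded extension operator $E:H^1(\Omega)\to H^1(\mathbb R^N)$ with the Gagliardo--Nirenberg--Sobolev inequality on $\mathbb R^N$, giving $\|f\|_{L^{2^\ast}(\Omega)}\leq C\|f\|_{H^1(\Omega)}$ with $C=C(\Omega,N)$. For general $p\in[2,2^\ast]$ one writes $1/p=(1-\theta)/2+\theta/2^\ast$; solving this linear relation among the reciprocals yields exactly $\theta=N(\tfrac12-\tfrac1p)\in[0,1]$. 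H\"older's inequality then gives $\|f\|_{L^p(\Omega)}\leq\|f\|_{L^2(\Omega)}^{1-\theta}\|f\|_{L^{2^\ast}(\Omega)}^{\theta}\leq C^\theta\|f\|_{L^2(\Omega)}^{1-\theta}\|f\|_{H^1(\Omega)}^{\theta}$, and setting $\Gamma_1(\Omega,N,p):=C^\theta$ completes the proof.

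The routine pieces are the two algebraic identities (the divergence computation in (i) and the reciprocal relation defining $\theta$ in (ii)) and the small-ball limiting argument around the origin in (i). The only genuinely nontrivial external ingredient is the Sobolev embedding on a bounded Lipschitz domain, i.e., the boundedness of the extension operator, and this is exactly why $\Gamma_1$ must be allowed to depend on $\Omega$; in the paper both statements are classical and are simply quoted from \cite{Davies} and \cite[Theorem 5.8]{Adams-Fournier}, the sketch above being how one would reconstruct them.
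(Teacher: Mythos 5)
Your proposal is correct: the divergence identity, vanishing boundary term for $N\geq 3$, Cauchy--Schwarz and density argument give (i) with the constant $4/(N-2)^2$, and the extension-plus-Gagliardo--Nirenberg--Sobolev embedding combined with the H\"older interpolation $1/p=(1-\theta)/2+\theta\frac{N-2}{2N}$ gives (ii) with exactly $\theta=N(\tfrac12-\tfrac1p)$. The paper offers no proof of this lemma, quoting it directly from \cite{Davies} and \cite[Theorem 5.8]{Adams-Fournier}, and your sketch is precisely the standard argument underlying those references.
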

As a simple consequence of  the above Sobolev interpolation theorem, we have 
\begin{corollary}\label{yu-corollary-1}
	Let $\Omega$ be a bounded Lipschitz domain in $\mathbb R^N$ ($N\geq 3$). For each case of 
	(\ref{yu-6-24-1-1-b}) and for every $\epsilon\in\left(0,\frac{1}{2}\right]$,  it holds that 
\begin{equation*}\label{yu-9-26-1}
	b(\cdot)\in L^{\frac{N}{2}+\epsilon}(\Omega).
\end{equation*}
Further, for each $\eta>0$ there is a constant $\Gamma_2(\Omega, N, \eta)>0$ such that,   for any $h(\cdot)\in L^{\frac{N}{2}+\eta}(\Omega)$ 
	 and $f(\cdot)\in H^1(\Omega)$,
\begin{eqnarray}\label{yu-9-26-2}
	\int_{\Omega}|h||f|^2dx
	\leq \Gamma_2(\Omega, N,\eta)\|h\|_{L^{\frac{N}{2}+\eta}(\Omega)}\|f\|_{L^2(\Omega)}^{\frac{4\eta}{N+2\eta}}\|f\|_{H^1(\Omega)}^{\frac{2N}{N+2\eta}}.
\end{eqnarray}
\end{corollary}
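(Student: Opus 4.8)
The plan is to verify the membership $b\in L^{N/2+\epsilon}(\Omega)$ separately in the two cases of \eqref{yu-6-24-1-1-b}, and then to obtain \eqref{yu-9-26-2} by a single application of H\"older's inequality followed by the Sobolev interpolation theorem of Lemma~\ref{hardy}(ii).

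For the membership, in case (i) I would use that $\Omega$ is bounded, so that $L^{N+\delta}(\Omega)\hookrightarrow L^{q}(\Omega)$ continuously for every $q\in[1,N+\delta]$; since $N\geq3$ and $\epsilon\leq\tfrac12$ one has $\tfrac{N}{2}+\epsilon\leq\tfrac{N+1}{2}\leq N\leq N+\delta$, whence $b\in L^{N/2+\epsilon}(\Omega)$ with $\|b\|_{L^{N/2+\epsilon}(\Omega)}\leq|\Omega|^{\gamma}\Lambda_2$ for an explicit $\gamma=\gamma(N,\epsilon,\delta)\geq0$. In case (ii) I would bound directly
\[
\int_\Omega|b|^{\,N/2+\epsilon}\,dx\;\leq\;\Lambda_2^{\,N/2+\epsilon}\int_\Omega|x|^{-(N/2+\epsilon)}\,dx ,
\]
and the right-hand side is finite because $\Omega$ is bounded and, again using $N\geq3$, the exponent satisfies $\tfrac{N}{2}+\epsilon<N$, so that $|x|^{-(N/2+\epsilon)}$ is integrable near the origin.

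To prove \eqref{yu-9-26-2}, fix $\eta>0$, set $p:=\tfrac{N}{2}+\eta$ (which is $>1$ since $N\geq3$), and let $p'=\tfrac{N+2\eta}{N+2\eta-2}$ be its conjugate exponent. H\"older's inequality gives
\[
\int_\Omega|h|\,|f|^2\,dx\;\leq\;\|h\|_{L^{N/2+\eta}(\Omega)}\,\bigl\|\,|f|^2\,\bigr\|_{L^{p'}(\Omega)}\;=\;\|h\|_{L^{N/2+\eta}(\Omega)}\,\|f\|_{L^{2p'}(\Omega)}^{2}.
\]
Writing $2p'=2+\tfrac{4}{N+2\eta-2}$ shows that $\eta\mapsto2p'$ is strictly decreasing on $(0,\infty)$ with range $\bigl(2,\tfrac{2N}{N-2}\bigr)$; hence $2p'\in\bigl[2,\tfrac{2N}{N-2}\bigr]$ and Lemma~\ref{hardy}(ii) applies with exponent $2p'$, yielding
\[
\|f\|_{L^{2p'}(\Omega)}\;\leq\;\Gamma_1(\Omega,N,2p')\,\|f\|_{H^1(\Omega)}^{\theta}\,\|f\|_{L^2(\Omega)}^{1-\theta},\qquad\theta=N\Bigl(\tfrac12-\tfrac{1}{2p'}\Bigr) .
\]
Squaring this and substituting into the H\"older bound, it remains only to simplify the exponents: a short computation gives $2\theta=N\cdot\tfrac{p'-1}{p'}=\tfrac{2N}{N+2\eta}$, and hence $2(1-\theta)=\tfrac{4\eta}{N+2\eta}$, which is exactly \eqref{yu-9-26-2} with $\Gamma_2(\Omega,N,\eta):=\Gamma_1(\Omega,N,2p')^{2}$. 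As a consistency check, $\tfrac{2N}{N+2\eta}+\tfrac{4\eta}{N+2\eta}=2$, matching the quadratic homogeneity of the left-hand side in $f$.

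I do not anticipate a genuine obstacle, since this corollary is only a preparatory estimate. The two points that merit a little care are the integrability of $|x|^{-(N/2+\epsilon)}$ at the origin in case (ii) --- which is where the hypothesis $\epsilon\leq\tfrac12$ is used, and only through the inequality $\tfrac{N}{2}+\epsilon<N$ valid for $N\geq3$ --- and the verification that the dual exponent $2p'$ stays inside the admissible range $[2,\tfrac{2N}{N-2}]$ of the Sobolev interpolation theorem, the critical exponent $\tfrac{2N}{N-2}$ being approached only in the excluded limit $\eta\to0$.
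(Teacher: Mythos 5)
Your argument is correct and is exactly the route the paper intends: the paper gives no written proof, presenting the corollary as ``a simple consequence'' of Lemma \ref{hardy}, and your combination of H\"older's inequality with exponent $p=\frac{N}{2}+\eta$ and the Sobolev interpolation of Lemma \ref{hardy}(ii) at exponent $2p'\in\bigl(2,\tfrac{2N}{N-2}\bigr)$, with the exponent bookkeeping $2\theta=\tfrac{2N}{N+2\eta}$, is precisely that consequence. The two membership checks for $b$ (embedding of $L^{N+\delta}(\Omega)$ on a bounded domain, and integrability of $|x|^{-(N/2+\epsilon)}$ since $\tfrac{N}{2}+\epsilon<N$) are also the standard verifications, so nothing is missing.
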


\medskip

\begin{proof}[\textbf{Proof of Theorem \ref{jiudu4}}]	The  proof  is divided into two steps. 
\vskip 5pt
	\textbf{Step 1. Energy estimates.} 
	In this step, we shall prove the following two claims:
\begin{itemize}
  \item If $u(\cdot,0)\in L^2(\Omega)$, then for each $t\in[0,6T]$ we have
\begin{equation}\label{yu-7-12-2-b}
	\|u(\cdot,t)\|_{L^2(\Omega)}\leq e^{Ct}\|u(\cdot,0)\|_{L^2(\Omega)}
\end{equation}
	and
\begin{equation}\label{yu-7-12-2}
	\|u(\cdot,t)\|_{H^1(\Omega)}\leq \frac{Ce^{Ct}}{\sqrt{t}}\|u(\cdot,0)\|_{L^2(\Omega)}.
	\end{equation}
  \item If $u(\cdot,0)\in H^1(\Omega)$, then we have
 \begin{equation}\label{yu-7-12-3}
 	\|u(\cdot,t)\|_{H^1(\Omega)}\leq e^{Ct}\|u(\cdot,0)\|_{H^1(\Omega)}\;\;\mbox{for each}\;\;
	t\in[0,6T].
 \end{equation}
 \end{itemize}
Indeed, multiplying the first equation 
	 in (\ref{yu-6-24-1}) by $u$ and then integrating by parts over $\Omega$, we get 
\begin{equation}\label{yu-7-15-1}
	\frac{1}{2}\frac{d}{dt}\int_{\Omega}|u|^2dx
	+\int_{\Omega}\nabla u\cdot(A\nabla u)dx=-\int_{\Omega}b|u|^2dx.
\end{equation}
Note that, from (\ref{yu-9-26-2}) (by letting $\eta=\frac{1}{2}$ there) and (\ref{yu-11-28-2}), we have 
\begin{eqnarray*}\label{yu-7-24-1}
	-\int_\Omega b|u|^2dx&\leq& C\|b\|_{L^{\frac{N+1}{2}}(\Omega)}\|u\|^{\frac{2}{N+1}}_{L^2(\Omega)}
	\|u\|^{\frac{2N}{N+1}}_{H^1(\Omega)}\leq C\Lambda_2\|u\|_{L^2(\Omega)}^{\frac{2}{N+1}}\|u\|_{H^1(\Omega)}^{\frac{2N}{N+1}}\nonumber\\
	&\leq& C\epsilon^{-N}\Lambda_2^{N+1}\|u\|_{L^2(\Omega)}^2+\epsilon\|u\|_{H^1(\Omega)}^2
	=(C\epsilon^{-N}\Lambda_2^{N+1}+\epsilon)\|u\|^2_{L^2(\Omega)}+\epsilon\|\nabla u\|^2_{L^2(\Omega)}\nonumber\\
	&\leq&(C\epsilon^{-N}\Lambda_2^{N+1}+\epsilon)\|u\|^2_{L^2(\Omega)}+\epsilon\Lambda_1\int_\Omega\nabla u\cdot(A\nabla u)dx.
\end{eqnarray*}
	Taking $\epsilon=\frac{1}{2\Lambda_1}$ in the above inequality, we obtain that 

\begin{equation*}\label{yu-7-24-2}
	-\int_{\Omega}b|u|^2dx\leq 
	 \left[C\Lambda_1^N\Lambda_2^{N+1}+\frac{1}{2\Lambda_1}\right]
	\int_{\Omega}|u|^2dx+\frac{1}{2}\int_{\Omega}\nabla u\cdot(A\nabla u)dx.
\end{equation*}
	This, along with (\ref{yu-7-15-1}), yields
\begin{equation}\label{yu-7-15-2}
	\frac{1}{2}\frac{d}{dt}\int_{\Omega}|u|^2dx
	+\frac{1}{2}\int_{\Omega}\nabla u\cdot(A\nabla u)dx\leq  \left[C\Lambda_1^N\Lambda_2^{N+1}
	+\frac{1}{2\Lambda_1}\right]\int_{\Omega}|u|^2dx.
\end{equation}
	Then 
\begin{equation}\label{yu-7-15-3}
	\frac{d}{dt}\left(e^{-\left[C\Lambda_1^N\Lambda_2^{N+1}
	+\Lambda_1^{-1}\right]t}\int_{\Omega}|u|^2dx)\right)\leq 0.
\end{equation}
	This gives (\ref{yu-7-12-2-b}). Moreover, by (\ref{yu-7-15-2}) and (\ref{yu-7-15-3}), we obtain 
\begin{eqnarray}\label{yu-7-15-4}
	&\;&\int_0^t\int_{\Omega}\nabla u\cdot(A\nabla u)dxds\nonumber\\
	&\leq&
	\left[t\left(C\Lambda_1^N\Lambda_2^{N+1}+\Lambda_1^{-1}\right)e^{(C\Lambda_1^N\Lambda_2^{N+1}+\Lambda_1^{-1})t}+1
	\right]
	\|u(\cdot, 0)\|^2_{L^2(\Omega)}. 
\end{eqnarray}

	Next, we show (\ref{yu-7-12-3}). Here we divide our proof into two cases based on the assumptions in (\ref{yu-6-24-1-1-b}). 
\vskip 5pt
	\emph{Case I. $|b(x)|\leq \frac{\Lambda_2}{|x|}$ a.e. $x\in\Omega$.}
	In this case, we take $r_0\in(0,d(0,\partial\Omega))$ and $\eta\in C^\infty(\mathbb{R}^N;[0,1])$ such that 
\begin{equation*}\label{yu-7-24-4}
\begin{cases}
	\overline{\triangle_{r_0}(0)}\subset\Omega,\\
	\eta=1&\mbox{in}\;\;\triangle_{\frac{r_0}{2}}(0),\\
	\eta=0&\mbox{in}\;\;\mathbb{R}^N\backslash\triangle_{r_0}(0)\\
	|\nabla\eta |\leq \frac{C}{r_0}&\mbox{in}\;\;\mathbb{R}^N. 
\end{cases}
\end{equation*}
         Multiplying the first equation of (\ref{yu-6-24-1}) by $-\mbox{div}(A\nabla u)\eta^2$
	 and integrating by parts over 
	$\Omega$, by Lemma \ref{hardy} we have
\begin{eqnarray}\label{yu-7-24-5}
	&\;&\frac{1}{2}\frac{d}{dt}\int_{\Omega}\nabla u\cdot (A\nabla u)\eta^2dx
	+\int_{\Omega}|\mbox{div}(A\nabla u)|^2\eta^2dx\nonumber\\
	&\leq&2\int_{\Omega}|\mbox{div}(A\nabla u)||(A\nabla u)\cdot \nabla\eta|\eta dx
	+2\int_{\Omega}|b||u||(A\nabla u)\cdot\nabla\eta|\eta dx
	+\int_{\Omega}|b||u|\eta^2|\mbox{div}(A\nabla u)|dx\nonumber\\
	&\leq&\frac{1}{2}\int_{\Omega}|\mbox{div}(A\nabla u)|^2\eta^2dx+5\int_{\Omega}|(A\nabla u)\cdot\nabla \eta|^2dx
	+2\int_{\Omega}|b|^2|u|^2\eta^2dx\nonumber\\
	&\leq&\frac{1}{2}\int_{\Omega}|\mbox{div}(A\nabla u)|^2\eta^2dx
	+\frac{5C\Lambda_1}{r_0^2}\int_{\Omega}\nabla u\cdot(A\nabla u)dx
	+\frac{16\Lambda_1\Lambda_2^2}{(N-2)^2}\int_{\Omega}\nabla u\cdot(A\nabla u)\eta^2dx\nonumber\\
	&\;&+\frac{16C\Lambda_2^2}{(N-2)^2r_0^2}\int_{\Omega}|u|^2dx. 
\end{eqnarray}
	Further, multiplying  the first equation of (\ref{yu-6-24-1}) by $-\mbox{div}(A\nabla u)(1-\eta^2)$
	 and integrating by parts over 
	$\Omega$, we get	
\begin{eqnarray*}\label{yu-7-24-6}
	&\;&\frac{1}{2}\frac{d}{dt}\int_{\Omega}\nabla u\cdot (A\nabla u)(1-\eta^2)dx
	+\int_{\Omega}|\mbox{div}(A\nabla u)|^2(1-\eta^2)dx\nonumber\\
	&\leq&2\int_{\Omega}|\mbox{div}(A\nabla u)||(A\nabla u)\cdot \nabla\eta|\eta dx
	+2\int_{\Omega}|b||u||(A\nabla u)\cdot\nabla\eta|\eta dx
	+\int_{\Omega}|b||u|(1-\eta^2)|\mbox{div}(A\nabla u)dx|\nonumber\\
	&\leq&\frac{1}{4}\int_{\Omega}|\mbox{div} (A \nabla u)|^2\eta^2dx+5\int_{\Omega}|\nabla\eta\cdot(A\nabla u)|^2dx
	+\int_{\Omega}|b|^2|u|^2\eta^2dx\nonumber\\
	&\;&+\frac{\Lambda_2}{r_0}\int_{\Omega}|u|(1-\eta^2)|\mbox{div}(A\nabla u)|dx\nonumber\\
	&\leq&\frac{1}{4}\int_{\Omega}|\mbox{div} (A \nabla u)|^2\eta^2dx
	+\frac{5C\Lambda_1}{r_0^2}\int_{\Omega}\nabla u\cdot(A\nabla u)dx
	+\frac{8\Lambda_1\Lambda_2^2}{(N-2)^2}\int_{\Omega}\nabla u\cdot(A\nabla u)\eta^2dx\nonumber\\
	&\;&+\frac{8C\Lambda_2^2}{(N-2)^2r_0^2}\int_{\Omega}|u|^2dx
	+\frac{\Lambda_2^2}{3r_0^2}\int_{\Omega}|u|^2(1-\eta^2)dx
	+\frac{3}{4}\int_{\Omega}|\mbox{div}(A\nabla u)|^2(1-\eta^2)dx. 
\end{eqnarray*}
	This, together with (\ref{yu-7-24-5}), gives that 
\begin{eqnarray}\label{yu-7-24-7}
	&\;&\frac{1}{2}\frac{d}{dt}\int_{\Omega}\nabla u\cdot(A\nabla u)dx+\frac{1}{4}\int_{\Omega}|\mbox{div}(A\nabla u)|^2dx\nonumber\\
	&\leq&\left(\frac{10C\Lambda_1}{r_0^2}+\frac{24\Lambda_2^2\Lambda_1}{(N-2)^2}\right)
	\int_{\Omega}\nabla u\cdot (A\nabla u)dx
	+\left(\frac{24C}{(N-2)^2}+\frac{1}{3}\right)\frac{\Lambda_2^2}{r_0^2}
	\int_{\Omega}|u|^2dx.
\end{eqnarray}
	By (\ref{yu-7-15-2}) and (\ref{yu-7-24-7}), we get 
\begin{eqnarray*}\label{yu-7-24-8}
	\frac{1}{2}\frac{d}{dt}\left(\int_{\Omega}|u|^2dx
	+\int_{\Omega}\nabla u\cdot(A\nabla u)dx\right)
	\leq C(r_0) \left(\int_{\Omega}|u|^2dx
	+\int_{\Omega}\nabla u\cdot(A\nabla u)dx\right),
\end{eqnarray*}
	where
$$
	C(r_0):=C\Lambda_1^N\Lambda_2^{N+1}+\frac{1}{2\Lambda_1}+\left(\frac{24C}{(N-2)^2}+\frac{1}{3}\right)\frac{\Lambda_2^2}{r_0^2}
	+\frac{10C\Lambda_1}{r_0^2}
	+\frac{24\Lambda_2^2\Lambda_1}{(N-2)^2}.
$$
	Therefore, we have
\begin{equation}\label{yu-7-24-9}
	\frac{d}{dt}\left[e^{-2C(r_0)t}\int_\Omega\left(|u|^2+\nabla u\cdot(A\nabla u)\right)dx\right]\leq 0.
\end{equation}
	This gives  
\begin{eqnarray}\label{yu-7-15-8}
	\|u(\cdot,t)\|_{H^1(\Omega)}^2\leq  \Lambda_2^2e^{2C(r_0)t}\|u(\cdot,0)\|^2_{H^1(\Omega)}.
\end{eqnarray}
	 Hence (\ref{yu-7-12-3}) holds in this case. 
\vskip 5pt
	\emph{Case II. $b(\cdot)\in L^{N+\delta}(\Omega)$ and $\|b(\cdot)\|_{L^{N+\delta}(\Omega)}\leq \Lambda_2$.} 
	Multiplying  the first equation of (\ref{yu-6-24-1}) by $-\mbox{div}(A\nabla u)$ and integrating by parts over $\Omega$, by 
	Lemma \ref{hardy}, we get 	 
\begin{eqnarray*}\label{yu-9-28-1}
	&\;&\frac{1}{2}\frac{d}{dt}\int_{\Omega}\nabla u\cdot(A\nabla u)dx
	+\frac{1}{2}\int_{\Omega}|\mbox{div}(A\nabla u)|^2dx\nonumber\\
	&\leq&2\|b\|^2_{L^N(\Omega)}\|u\|^2_{L^{\frac{2N}{N-2}}(\Omega)}
	\leq 2\Gamma_1(\Omega, N, 2)\|b\|^2_{L^N(\Omega)}\|u\|^2_{H^1(\Omega)}\nonumber\\
	&\leq&C\Lambda_1\|b\|^2_{L^N(\Omega)}\int_{\Omega}(|u|^2+\nabla u\cdot (A\nabla u))dx.
\end{eqnarray*}
	 This, along with (\ref{yu-7-15-2}), gives that 	 
\begin{equation*}\label{yu-9-28-2}
	\frac{1}{2}\frac{d}{dt}\int_{\Omega}(|u|^2+\nabla u\cdot (A\nabla u))dx
	\leq \left(C\Lambda_2^2\Lambda_1+C\Lambda^N_1\Lambda_2^{N+1}+\frac{1}{2\Lambda_1}\right)
	\int_{\Omega}(|u|^2+\nabla u\cdot (A\nabla u))dx.
\end{equation*}
	This implies that 
\begin{equation}\label{yu-9-28-3}
	\frac{d}{dt}\left[e^{-(C\Lambda_2^2\Lambda_1+C\Lambda_1^N\Lambda_2^{N+1}+\Lambda_1^{-1})t}
	\int_{\Omega}(|u|^2+\nabla u\cdot (A\nabla u))dx\right]\leq 0.
\end{equation}
	 Similar to the proof of (\ref{yu-7-15-8}), we obtain (\ref{yu-7-12-3}) in this case. 
\par
Moreover, using (\ref{yu-7-24-9}) and (\ref{yu-9-28-3}) respectively in each case analyzed above, we obtain that there exists $C>0$ such that 
\begin{eqnarray*}\label{yu-7-15-9}
\int_0^t\int_{\Omega}(|u|^2+\nabla u\cdot (A\nabla u))dxds
	\geq \int_0^te^{-C(t-s)}ds\,  \|u(\cdot,t)\|_{H^1(\Omega)}^2	\geq \Lambda_1^{-1}te^{-Ct}\|u(\cdot,t)\|_{H^1(\Omega)}^2.
\end{eqnarray*}
	This, together with (\ref{yu-7-15-4}) and (\ref{yu-7-12-2-b}),
	 yields (\ref{yu-7-12-2}).

\vskip 5pt
	\textbf{Step 2. Completing the proof.} 
	We arbitrarily fixed $t_0\in(0,T)$ and consider the following equation
\begin{equation*}\label{yu-7-12-12}
\begin{cases}
	v_t-\mbox{div}(A(x)\nabla v)+bv=0&\mbox{in}\;\;\Omega\times(0,4T),\\
	A\nabla v\cdot\nu=0&\mbox{on}\;\;\partial\Omega\times(0,4T),\\
	v(\cdot,0)=u(\cdot,\frac{t_0}{2})&\mbox{in}\;\;\Omega.
\end{cases}
\end{equation*}
	It is obvious that $v(\cdot,t)=u(\cdot,t+\frac{t_0}{2})$ when $t\in[0,4T]$. Moreover, by (\ref{yu-7-12-2}) we have 
	$u(\cdot,\frac{t_0}{2})\in H^1(\Omega)$, 
	which means that $v\in C([0,4T];H^1(\Omega))$. 
	From Proposition \ref{yu-theorem-7-10-6}, it follows that there are $C>0$ and $\sigma\in(0,1)$ such that  
\begin{equation*}\label{yu-7-13-1}
	\left\|v\left(\cdot,\frac{t_0}{2}\right)\right\|_{L^2(\Omega)}\leq Ce^{\frac{C(T^2+1)}{t_0}}
	\left\|v\left(\cdot,\frac{t_0}{2}\right)\right\|^\sigma_{L^2(\omega)}\left(\sup_{s\in[0,T]}\|v(\cdot,s)\|_{H^1(\Omega)}\right)^{1-\sigma}. 
\end{equation*}
	This, along with (\ref{yu-7-12-3}), gives that 
\begin{equation*}\label{yu-7-13-2}
	\left\|v\left(\cdot,\frac{t_0}{2}\right)\right\|_{L^2(\Omega)}\leq Ce^{\frac{C(T^2+1)}{t_0}}
	\left\|v\left(\cdot,\frac{t_0}{2}\right)\right\|^\sigma_{L^2(\omega)}\|v(\cdot,0)\|^{1-\sigma}_{H^1(\Omega)}. \end{equation*}
Which is 
\begin{equation*}\label{yu-7-13-3}
	\left\|u\left(\cdot,t_0\right)\right\|_{L^2(\Omega)}\leq Ce^{\frac{C(T^2+1)}{t_0}}
	\left\|u\left(\cdot,t_0\right)\right\|^\sigma_{L^2(\omega)}\left\|u\left(\cdot,\frac{t_0}{2}\right)\right\|^{1-\sigma}_{H^1(\Omega)}.
\end{equation*}
	This, together with (\ref{yu-7-12-2}), implies (\ref{yu-7-12-1}) and completes the proof.
\end{proof}

\subsection{Proof of Theorem \ref{yu-main-1}}

To make the paper self-contained, we here provide the proof of Theorem \ref{yu-main-1} in detail, although 
it is almost the same as the proof of \cite[Theorem 1.1]{Phung-Wang-2013} or \cite[Theorem 1]{Apraiz-Escauriaza-Wang-Zhang}.

\begin{lemma}\label{yu-lemma-7-13-1}
    (\cite[Proposition 2.1]{Phung-Wang-2013})	Let $E\subset(0,T)$ be a measurable set of positive measure, 
	$\ell$ be a density point of $E$. Then for each $z>1$, there exists  $\ell_1\in(\ell,T)$
	such that $\{\ell_m\}_{m\in\mathbb{N}^+}$ given by
\begin{equation}\label{yu-7-13-b-1}
	\ell_{m+1}=\ell+\frac{1}{z^m}(\ell_1-\ell)
\end{equation}
verifies 
\begin{equation}\label{yu-7-13-b-2}
	\ell_m-\ell_{m+1}\leq 3|E\cap(\ell_{m+1},\ell_m)|.
\end{equation}	 
\end{lemma}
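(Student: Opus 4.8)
The plan is to build the sequence directly from the Lebesgue density of $\ell$ in $E$ on the right. First I would record the elementary fact that, since $\ell$ is a density point of $E$, for every $\varepsilon\in(0,1)$ there is $r_\varepsilon>0$ such that
\[
|E\cap(\ell,\ell+r)|\ge(1-\varepsilon)\,r\qquad\text{for all}\ \ 0<r\le r_\varepsilon .
\]
This one-sided estimate is a consequence of the (two-sided) density at $\ell$ being $1$: since $|E\cap(\ell-r,\ell)|\le r$ and $|E\cap(\ell,\ell+r)|\le r$ while their sum is at least $(1-\varepsilon)\,2r$ for all small $r$, each of the two summands must be at least $(1-\varepsilon)r$. (We may also assume $\ell<T$, since almost every point of $E\subset(0,T)$ is a density point.)

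Next, given $z>1$, I would fix the quantitative choices
\[
\varepsilon:=\frac{2(z-1)}{3z}\in\Bigl(0,\tfrac23\Bigr),\qquad
\ell_1:=\ell+\tfrac12\min\{r_\varepsilon,\,T-\ell\}\in(\ell,T),
\]
with $r_\varepsilon$ as above, and define $\{\ell_m\}_{m\in\mathbb N^+}$ by \eqref{yu-7-13-b-1}. A direct computation then gives, for every $m\in\mathbb N^+$,
\[
\ell_m-\ell=z^{-(m-1)}(\ell_1-\ell),\qquad
\ell_{m+1}-\ell=z^{-m}(\ell_1-\ell),\qquad
\ell_m-\ell_{m+1}=(z-1)\,z^{-m}(\ell_1-\ell).
\]
In particular $\ell_{m+1}<\ell_m$, the sequence decreases to $\ell$, and both $\ell_m-\ell$ and $\ell_{m+1}-\ell$ lie in $(0,r_\varepsilon)$ because $\ell_1-\ell\le\tfrac12 r_\varepsilon$; hence the density estimate of the previous step is applicable at the radii $\ell_m-\ell$ and $\ell_{m+1}-\ell$.

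Finally, writing $a:=\ell_{m+1}-\ell$ so that $\ell_m-\ell=za$, I would estimate
\[
|E\cap(\ell_{m+1},\ell_m)|=|E\cap(\ell,\ell_m)|-|E\cap(\ell,\ell_{m+1})|\ \ge\ (1-\varepsilon)\,za-a=\bigl((z-1)-\varepsilon z\bigr)a ,
\]
and the choice of $\varepsilon$ makes $(z-1)-\varepsilon z=(z-1)-\tfrac23(z-1)=\tfrac13(z-1)$, so that
\[
|E\cap(\ell_{m+1},\ell_m)|\ \ge\ \tfrac13(z-1)a=\tfrac13\,(\ell_m-\ell_{m+1}),
\]
which is precisely \eqref{yu-7-13-b-2}. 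I do not expect a genuine obstacle here: the only points needing attention are the passage from the two-sided Lebesgue density of $\ell$ to the one-sided lower bound used above, and the verification that the radii $\ell_m-\ell$ produced by the geometric sequence all remain below the threshold $r_\varepsilon$ — both of which are secured by the explicit choices of $\varepsilon$ and $\ell_1$. The constant $3$ is simply what the choice $\varepsilon=\frac{2(z-1)}{3z}$ delivers; choosing $\varepsilon$ smaller would give the analogous inequality with any prescribed constant $>1$ in place of $3$.
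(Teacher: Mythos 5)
Your argument is correct in substance, and it is essentially the standard density-point proof: the paper itself gives no proof of this lemma (it is quoted from \cite[Proposition 2.1]{Phung-Wang-2013}), and the proof there follows the same route you take, namely a one-sided lower density bound at $\ell$, a choice of $\ell_1$ close enough to $\ell$, and the telescoping estimate $|E\cap(\ell_{m+1},\ell_m)|=|E\cap(\ell,\ell_m)|-|E\cap(\ell,\ell_{m+1})|\ge(1-\varepsilon)(\ell_m-\ell)-(\ell_{m+1}-\ell)$ with $\varepsilon$ tuned to $z$ so that \eqref{yu-7-13-b-2} comes out with the constant $3$. One small slip to repair: from $|E\cap(\ell-r,\ell)|\le r$, $|E\cap(\ell,\ell+r)|\le r$ and $|E\cap(\ell-r,\ell+r)|\ge(1-\varepsilon)2r$ you may only conclude that each one-sided piece is at least $(1-2\varepsilon)r$, not $(1-\varepsilon)r$; as written, feeding $(1-2\varepsilon)$ into your final computation with $\varepsilon=\frac{2(z-1)}{3z}$ would destroy the sign. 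The fix is immediate: define $r_\varepsilon$ by applying the two-sided density condition with parameter $\varepsilon/2$, which yields the one-sided bound $|E\cap(\ell,\ell+r)|\ge(1-\varepsilon)r$ for $0<r\le r_\varepsilon$ exactly as you use it. With that adjustment, your choices of $\varepsilon$ and $\ell_1$, the verification that all radii $\ell_m-\ell$ stay below $r_\varepsilon$, and the final inequality $|E\cap(\ell_{m+1},\ell_m)|\ge\frac13(\ell_m-\ell_{m+1})$ are all correct.
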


\medskip

\begin{proof}[\textbf{Proof of Theorem \ref{yu-main-1}}]

By (\ref{yu-7-12-1}), one can show that, for arbitrary fixed $\epsilon>0$ and any $t_0\in(0,T)$, 
\begin{equation*}\label{yu-7-13-5}
	\left\|u\left(\cdot,t_0\right)\right\|_{L^2(\Omega)}
	\leq \frac{Ce^{\frac{C(T^2+1)}{t_0}}}{\epsilon^{\gamma}}
	\left\|u\left(\cdot,t_0\right)\right\|_{L^2(\omega)}
	+\epsilon\|u(\cdot,0)\|_{L^2(\Omega)},
\end{equation*}
	where $\gamma>0$ is a constant. By a translation in time, one has for each $0\leq t_1<t_2<T$, 
\begin{equation*}\label{yu-7-13-6}
	\left\|u\left(\cdot,t_2\right)\right\|_{L^2(\Omega)}
	\leq \frac{Ce^{\frac{C(T^2+1)}{t_2-t_1}}}{\epsilon^{\gamma}}
	\left\|u\left(\cdot,t_2\right)\right\|_{L^2(\omega)}
	+\epsilon\|u(\cdot,t_1)\|_{L^2(\Omega)}\;\;\mbox{for all}\;\;\epsilon>0.
\end{equation*}
	Let $0<\ell_{m+2}<\ell_{m+1}\leq t<\ell_m<T$, by (\ref{yu-7-13-6}), we get 
\begin{equation}\label{yu-7-13-7}
	\|u(\cdot,t)\|_{L^2(\Omega)}
	\leq \frac{Ce^{\frac{C(T^2+1)}{t-\ell_{m+2}}}}{\epsilon^\gamma}\|u(\cdot,t)\|_{L^2(\omega)}
	+\epsilon\|u(\cdot,\ell_{m+2})\|_{L^2(\Omega)}\;\;\mbox{for all}\;\;\epsilon>0.
\end{equation}
	Noting that, by (\ref{yu-7-12-2-b}), 
\begin{equation*}\label{yu-7-13-8}
	e^{-CT}\|u(\cdot,\ell_m)\|_{L^2(\Omega)}\leq \|u(\cdot,t)\|_{L^2(\Omega)}. 
\end{equation*}
	This, along with (\ref{yu-7-13-7}), yields that for any $\epsilon>0$,
\begin{equation*}\label{yu-7-13-9}
	\|u(\cdot,\ell_m)\|_{L^2(\Omega)}
	\leq \frac{Ce^{\frac{C(T^2+1)}{\ell_{m+1}-\ell_{m+2}}}}{\epsilon^\gamma}
	\|u(\cdot,t)\|_{L^2(\omega)}+\epsilon\|u(\cdot,\ell_{m+2})\|_{L^2(\Omega)}.
\end{equation*}
	Integrating over $E\cap(\ell_{m+1},\ell_{m})$, we get 
\begin{eqnarray*}\label{yu-7-13-10}
	\|u(\cdot,\ell_m)\|_{L^2(\Omega)}
	\leq\frac{Ce^{\frac{C(T^2+1)}{\ell_{m+1}-\ell_{m+2}}}}{|E\cap (\ell_{m+1},\ell_{m})|\epsilon^\gamma}
	\int_{\ell_{m+1}}^{\ell_m}\chi_E\|u(\cdot,t)\|_{L^2(\omega)}dt
	+\epsilon \|u(\cdot,\ell_{m+2})\|_{L^2(\Omega)}.
\end{eqnarray*}
	This, together with (\ref{yu-7-13-b-1}) and (\ref{yu-7-13-b-2}), gives 
\begin{eqnarray*}\label{yu-7-13-11}
	\epsilon^\gamma e^{-\eta z^{m+2}}\|u(\cdot,\ell_m)\|_{L^2(\Omega)}
	-\epsilon^{1+\gamma}e^{-\eta z^{m+2}}\|u(\cdot,\ell_{m+2})\|_{L^2(\Omega)}
	\leq C\int_{\ell_{m+1}}^{\ell_m}\chi_E\|u(\cdot,t)\|_{L^2(\omega)}dt,
\end{eqnarray*}
	where $\eta:=\frac{C(T^2+1)}{z(z-1)(\ell_1-\ell)}$. 
	Letting $\epsilon:=e^{-\eta z^{m+2}}$ and $z:=\sqrt{\frac{2+\gamma}{1+\gamma}}$, we have 
\begin{eqnarray}\label{yu-7-13-12}
	e^{-\eta(2+\gamma)z^m}\|u(\cdot,\ell_m)\|_{L^2(\Omega)}
	-e^{-\eta(2+\gamma)z^{m+2}}\|u(\cdot,\ell_{m+2})\|_{L^2(\Omega)}
	\leq C\int_{\ell_{m+1}}^{\ell_m}\chi_E\|u(\cdot,t)\|_{L^2(\omega)}dt.
\end{eqnarray}
	By taking first  $m=2m'$ and then summing the estimate (\ref{yu-7-13-12}) from 
	$m'=1$ to infinity,  we obatin
\begin{eqnarray*}\label{yu-7-14-1}
	&\;&\sum_{m'=1}^\infty\left[e^{-\eta(2+\gamma)z^{2m'}}\|u(\cdot,\ell_{2m'})\|_{L^2(\Omega)}
	-e^{-\eta(2+\gamma)z^{2m'+2}}\|u(\cdot,\ell_{2m'+2})\|_{L^2(\Omega)}\right]\nonumber\\
	&\leq& C\sum_{m'=1}^\infty\int_{E\cap(\ell_{2m'+1},\ell_{2m'})}\|u(\cdot,t)\|_{L^2(\omega)}dt
	\leq C\int_{E}\|u(\cdot,t)\|_{L^2(\omega)}dt.
\end{eqnarray*}
	Note that $e^{-\eta(2+\gamma)z^{2m'}}\to 0$ as $m'\to \infty$. Therefore, 
\begin{equation*}\label{yu-7-14-2}
	\|u(\cdot,\ell_2)\|_{L^2(\Omega)}\leq Ce^{\eta(2+\gamma)z^2}
	\int_{E}\|u(\cdot,t)\|_{L^2(\omega)}dt.
\end{equation*}
	This, along with (\ref{yu-7-12-2-b}), leads to the desired observability inequality. 
\par
	Finally, when $E=[0,T]$, we can take $\ell=0$ and $\ell_1=T$ in the above argument to conclude the desired result. 
\end{proof}

\section{Proofs of quantitative estimates of unique continuation}\label{dujin1}

\subsection{Preliminary lemmas}

\subsubsection{Local energy estimates and exponential decay}

Suppose $\rho\in(0,\min\{1,\rho_0\})$ such that $\Omega_\rho\neq \emptyset$, $T>0$, $t_0\in(0,T)$ and $x_0\in\Omega_\rho$.   Let $u\in C([0,2T];H^1(\triangle_{ \rho}(x_0)))$ be a solution of 
\begin{equation}\label{yu-11-29-3}
\begin{cases}
    l(x)u_t-\mbox{div}(A(x)\nabla u)+b(x)u=0&\mbox{in}\;\;\triangle_{\rho}(x_0)\times(0,2T),\\
    u(\cdot,0)=0&\mbox{in}\;\;\triangle_{ \rho}(x_0).
\end{cases}
\end{equation}
Assume $\eta\in C^\infty(\mathbb{R}^+;[0,1])$ is a cutoff function satisfying  
\begin{equation}\label{yu-6-6-6}
\begin{cases}
	\eta\equiv 1 &\mbox{in}\;\;(0,t_0),\\
	\eta\equiv0 &\mbox{in}\;\; [T,+\infty),\\
	|\eta_t|\leq \frac{C}{T-t_0}&\mbox{in}\;\;(t_0,T)
\end{cases}
\end{equation}
with a generic positive constant $C$ independent  of $t_0$ and $T$. Set
\begin{equation}\label{du8141}
	R_0=
\begin{cases}
	\Theta_N^{-N}\left(8\sqrt{2}\Lambda_1\Lambda_2\Gamma_2(\triangle_1(0),N,\frac{N}{2})\right)^{-\frac{N+\delta}{\delta}}
	&\mbox{if (i) in (\ref{yu-6-24-1-1-b}) holds},\\
	\frac{N(N-2)}{4\sqrt{2}\Lambda_1\Lambda_2}&\mbox{if (ii) in (\ref{yu-6-24-1-1-b}) holds},
\end{cases}
\end{equation} 
and take $R\in(0,\min\{R_0,\rho\})$. Here $\Theta_N=|\triangle_1(0)|$. 
Let $v$ be the solution of
    \begin{equation}\label{yu-11-29-4}
\begin{cases}
    l(x)v_t-\mbox{div}(A(x)\nabla v)+b(x)v=0&\mbox{in}\;\;\triangle_{R}(x_0)\times\mathbb{R}^+,\\
    v=\eta u&\mbox{on}\;\;\partial\triangle_R(x_0)\times\mathbb{R}^+,\\
    v(\cdot,0)=0&\mbox{in}\;\;\triangle_R(x_0),
\end{cases}
\end{equation}
where $u$ satisfies  \eqref{yu-11-29-3} and $\eta$ verifies \eqref{yu-6-6-6}. Then, we have
the following exponential decay estimate of  $H^1$-energy for \eqref{yu-11-29-4}.
    
\begin{lemma}\label{yu-lemma-6-10-1}
      There exists
a generic constant $C>0$ such that
\begin{equation*}\label{yu-6-18-1}
 	\|v(\cdot,t)\|_{H^1(\triangle_R(x_0))}^2\leq CT^{-1}e^{CR^{1-N}T\left(1+\frac{1}{T-t_0}\right)-CR^{-2}(t-T)^+}F^2(R)\quad\text{for all}\;\; t\in\mathbb{R}^+,
 \end{equation*}
where $(t-T)^+=\max\{0,t-T\}$ and
	$F(R)=\sup_{s\in[0,T]}\|u(\cdot,s)\|_{H^1(\triangle_R(x_0))}$.
\end{lemma}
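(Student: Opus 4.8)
The plan is to peel off a lift of the boundary data, reducing to an inhomogeneous problem on the ball $\triangle_R(x_0)$ with homogeneous Dirichlet condition whose source is small and supported in the short time window $(t_0,T)$; to run parabolic energy estimates on $[0,T]$ while absorbing the unbounded potential by the Hardy and Sobolev interpolation inequalities; and, for $t\ge T$ where source and boundary data both vanish, to use the spectral gap $\sim R^{-2}$ of the elliptic operator on $\triangle_R(x_0)$ to produce exponential decay. Concretely, set $w=v-\eta u$. Since $v=\eta u$ on $\partial\triangle_R(x_0)\times\mathbb R^+$ by \eqref{yu-11-29-4} and $u\in C([0,2T];H^1(\triangle_\rho(x_0)))$ with $\triangle_R(x_0)\subset\triangle_\rho(x_0)$, each $w(\cdot,t)$ lies in $H_0^1(\triangle_R(x_0))$, and $w(\cdot,0)=0$ because $\eta(0)=1$ and $u(\cdot,0)=0$. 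Using that $u$ solves \eqref{yu-11-29-3}, a direct computation gives
$$l\,w_t-\mbox{div}(A\nabla w)+b\,w=-\,l\,\eta_t\,u\qquad\text{in }\triangle_R(x_0)\times\mathbb R^+.$$
Because $\eta_t$ is supported in $(t_0,T)$ with $|\eta_t|\le C/(T-t_0)$, the right-hand side vanishes on $[0,t_0]$ and on $[T,\infty)$; in particular $w\equiv 0$ on $[0,t_0]$ (it solves there a homogeneous problem with zero data, and uniqueness holds once the potential is absorbed, i.e. for $R<R_0$), and for $t\ge T$ the function $v=w$ solves the homogeneous Dirichlet problem on $\triangle_R(x_0)$.

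For the interval $[0,T]$, I would test the equation for $w$ with $w$ over $\triangle_R(x_0)$, use the ellipticity \eqref{yu-11-28-2}, the bounds \eqref{yu-7-29-3} on $l$, and absorb the term $\int b\,w^2$ into $\int\nabla w\cdot(A\nabla w)$: in case (ii) of \eqref{yu-6-24-1-1-b} through $|b|\le\Lambda_2/|x|$, the Hardy inequality of Lemma \ref{hardy}(i) applied to the zero-extension of $w$, and the Poincaré inequality on $\triangle_R(x_0)$; in case (i) through the interpolation estimate \eqref{yu-9-26-2} of Corollary \ref{yu-corollary-1} on $\triangle_R(x_0)$ together with Hölder's inequality and the smallness of $|\triangle_R(x_0)|$. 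The threshold $R_0$ in \eqref{du8141} is exactly what makes this absorption work with constant $\le\frac12$, since the relevant Hardy and Sobolev constants on a ball of radius $R$ scale explicitly in $R$. This yields an inequality of the form $\frac{d}{dt}\int_{\triangle_R(x_0)} l\,w^2+c\int_{\triangle_R(x_0)}|\nabla w|^2\le C\int_{\triangle_R(x_0)}|\eta_t|^2u^2$; testing also with $-\mbox{div}(A\nabla w)$, exactly as in Step~1 of the proof of Theorem \ref{jiudu4}, produces a companion inequality controlling $\frac{d}{dt}\int\nabla w\cdot(A\nabla w)$. Integrating over $(t_0,T)$, using $w(\cdot,t_0)=0$, $|\eta_t|\le C/(T-t_0)$, $\int_{\triangle_R(x_0)}u^2\le F(R)^2$, Grönwall's inequality, and a mean-value-in-time argument to pass from the time-integrated gradient bound to a pointwise-in-$t$ bound, I expect to reach
$$\sup_{t\in[0,T]}\|w(\cdot,t)\|_{H^1(\triangle_R(x_0))}^2\le C\,T^{-1}e^{CR^{1-N}T\left(1+\frac{1}{T-t_0}\right)}F(R)^2,$$
and then, since $\|\eta u(\cdot,t)\|_{H^1(\triangle_R(x_0))}\le F(R)$ and the exponential factor dominates, the claimed bound follows for $t\in[0,T]$, where $(t-T)^+=0$.

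For $t\ge T$ one has $v=w$ solving $l\,v_t-\mbox{div}(A\nabla v)+b\,v=0$ with $v=0$ on $\partial\triangle_R(x_0)$; testing with $v$, absorbing the potential as above, and invoking the Poincaré inequality on $\triangle_R(x_0)$ (constant $\sim R^2$) together with $\Lambda_3^{-1}\le l\le\Lambda_3$ gives $\frac{d}{dt}\int_{\triangle_R(x_0)} l\,v^2\le -c\,R^{-2}\int_{\triangle_R(x_0)} l\,v^2$, hence $\|v(\cdot,t)\|_{L^2(\triangle_R(x_0))}^2\le Ce^{-cR^{-2}(t-T)}\|v(\cdot,T)\|_{L^2(\triangle_R(x_0))}^2$; testing with $-\mbox{div}(A\nabla v)$ and combining upgrades this to the same exponential decay for $\|v(\cdot,t)\|_{H^1(\triangle_R(x_0))}^2$, up to a bounded transient absorbed into $C$. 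Chaining this with the bound on $\|v(\cdot,T)\|_{H^1(\triangle_R(x_0))}^2$ from the previous step yields the asserted estimate for all $t\in\mathbb R^+$.

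The main obstacle is the absorption of the unbounded potential in the energy estimates: it must be carried out uniformly over both alternatives in \eqref{yu-6-24-1-1-b} and with the precise $R$-dependence encoded in $R_0$, which forces one to work with the $R$-scaled forms of the Hardy and Sobolev interpolation inequalities rather than their $R$-independent versions, and to be careful that the absorbed fraction stays below $\frac12$. A secondary, essentially bookkeeping difficulty is tracking constants through Grönwall's inequality on the possibly short interval $(t_0,T)$ and through the cutoff bound $|\eta_t|\le C/(T-t_0)$, so that the factors $T^{-1}$, $R^{1-N}$ and $1+\frac{1}{T-t_0}$ emerge with the correct powers.
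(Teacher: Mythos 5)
Your treatment of $t\in[0,T]$ is essentially the paper's own argument: the same decomposition $w=v-\eta u$, the same source $-l\eta_t u$ supported in $(t_0,T)$ with $|\eta_t|\le C/(T-t_0)$, and the same absorption of the potential through the $R$-scaled Hardy and Sobolev interpolation inequalities tied to the threshold $R_0$, followed by Gr\"onwall. The only deviation there is that the paper obtains the gradient bound by testing with $w_t$ rather than with $-\mbox{div}(A\nabla w)$; your variant is workable, but note that because $l$ is not constant the integration by parts produces the extra cross term $\int w_t\,\nabla l\cdot(A\nabla w)$, which you must handle (e.g.\ by substituting $lw_t=\mbox{div}(A\nabla w)-bw-l\eta_tu$), so testing with $w_t$ is the cleaner choice.

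The genuine gap is in the regime $t\ge T$. Your $L^2$ exponential decay via Poincar\'e is correct, but the claim that ``testing with $-\mbox{div}(A\nabla v)$ and combining upgrades this to the same exponential decay for $\|v(\cdot,t)\|_{H^1}^2$'' does not stand as written: that test gives, after the absorptions, an inequality of the shape $\frac{d}{dt}\int l\,\nabla v\cdot(A\nabla v)+\tfrac12\int|\mbox{div}(A\nabla v)|^2\le C\int\bigl(|\nabla v|^2+|v|^2\bigr)$ with an $R$-independent constant on the right, so the right-hand side is nonnegative and yields no decay by itself; converting the dissipation into decay requires $\int|\mbox{div}(A\nabla v)|^2\ge cR^{-2}\int|\nabla v|^2$ and then the net rate is $cR^{-2}-C$, which is of order $R^{-2}$ only under an additional smallness condition on $R$ that is not among the hypotheses ($R<\min\{R_0,\rho\}$ only). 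An extra ingredient is needed. The paper supplies it spectrally: it expands $w(\cdot,t)$ in the eigenfunctions of $\mathcal A=-l^{-1}[\mbox{div}(A\nabla\cdot)-b]$ on $\triangle_R(x_0)$, whose first eigenvalue exceeds $CR^{-2}$ by the coercivity estimate \eqref{yu-6-7-9}, and uses the identity $-\langle w(\cdot,t),w_t(\cdot,t)\rangle_{\mathcal L^2}=\sum_i\mu_i|\langle w(\cdot,T),f_i\rangle|^2e^{-2\mu_i(t-T)}$ to pass the exponential decay to the $H^1$ seminorm. If you prefer to stay with energy methods, you can instead test with $v_t$ to see that the coercive quadratic form $Q(v(t))=\int\nabla v\cdot(A\nabla v)+\int b|v|^2$ is non-increasing, and combine this monotonicity with your $L^2$ decay through a mean-value-in-time argument on intervals of length $R^2$, obtaining $Q(v(t))\le CR^{-2}e^{-cR^{-2}(t-T)}\|v(\cdot,T)\|_{L^2(\triangle_R(x_0))}^2$; the extra factor $R^{-2}$ is harmless since it is absorbed into $e^{CR^{1-N}T(1+\frac{1}{T-t_0})}$. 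Either repair completes the proof; as submitted, the $H^1$ decay for $t\ge T$ is not justified.
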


\begin{proof}
We proceed the proof into two steps as follows.
 
\par
\vskip 5pt
    \textbf{Step 1. To prove \eqref{yu-6-18-1} when $t\in[0,T]$}. 
Setting $w=v-\eta u$ in $\triangle_R(x_0)\times\mathbb R^+$, we find that  $w$ verifies that 
\begin{equation}\label{yu-6-6-7}
\begin{cases}
	l(x)w_t-\mbox{div}(A(x)\nabla w)+b(x)w=-l(x)\eta_tu
	&\mbox{in}\;\;\triangle_R(x_0)\times\mathbb R^+,\\
	w=0&\mbox{on}\;\;\partial\triangle_R(x_0)\times\mathbb R^+,\\
	w(\cdot,0)=0&\mbox{in}\;\;\triangle_R(x_0).
\end{cases}
\end{equation}
	We now prove that for each $t\in[0,T]$, 
\begin{equation}\label{yu-6-7-6}
	\|w(\cdot,t)\|^2_{L^2(\triangle_R(x_0))}\leq \frac{CT}{T-t_0}e^{C\left(R^{1-N}+\frac{1}{T-t_0}\right)T}
	\sup_{s\in[0,T]}\|u(\cdot,s)\|^2_{L^2(\triangle_R(x_0))}
\end{equation}
with a generic constant $C>0$. We divide our proof into two cases. 
\vskip 5pt
	\emph{Case I. $|b(x)|\leq \frac{\Lambda_2}{|x|}$ a.e. $x\in\Omega$.}
	Indeed, 
multiplying  first (\ref{yu-6-6-7}) by $w$ and integrating by parts over $\triangle_R(x_0)\times(0,t)$, 
along with the Hardy inequality in Lemma \ref{hardy}, we have 
\begin{eqnarray*}\label{yu-6-7-1}
	&\;&\frac{1}{2}\int_{\triangle_R(x_0)}l|w(\cdot,t)|^2dx+\int_0^t\int_{\triangle_R(x_0)}\nabla w\cdot(A\nabla w)dxds\nonumber\\
	&\leq&\int_0^t\int_{\triangle_R}|b||w|^2dxds
	+\int_0^t\int_{\triangle_R(x_0)}l\eta_tuwdxds\nonumber\\
	&\leq&\Lambda_2\int_0^t\int_{\triangle_{R}(x_0)}|x|^{-1}|w|^2dxds
	+\frac{1}{2}\int_0^t\int_{\triangle_R(x_0)}l|\eta_t||u|^2dxds
	+\frac{1}{2}\int_0^t\int_{\triangle_R(x_0)}l|\eta_t||w|^2dxds\nonumber\\
	&\leq&\frac{1}{2}\epsilon
	\int_0^t\int_{\triangle_R(x_0)}|x|^{-2}|w|^2dxds
	+\frac{CT}{T-t_0}\sup_{s\in[0,T]}\|u(\cdot,s)\|_{L^2(\triangle_R(x_0))}^2\nonumber\\
	&\;&+\frac{1}{2}\left(\epsilon^{-1}\Lambda_2^2+\frac{C}{T-t_0}\right)
	\int_0^t\int_{\triangle_R(x_0)}|w|^2dxds\nonumber\\
	&\leq&\frac{2\epsilon\Lambda_1}{(N-2)^2}
	\int_0^t\int_{\triangle_R(x_0)}\nabla w\cdot(A\nabla w)dxds
	+\frac{CT}{T-t_0}\sup_{s\in[0,T]}\|u(\cdot,s)\|_{L^2(\triangle_R(x_0))}^2\nonumber\\
	&\;&+\frac{1}{2}\left(\epsilon^{-1}\Lambda_2^2+\frac{C}{T-t_0}\right)
	\int_0^t\int_{\triangle_R(x_0)}|w|^2dxds.
\end{eqnarray*}
	Taking $\epsilon=\frac{(N-2)^2}{2\Lambda_1}$
    	in the above inequality,  we obtain 
\begin{eqnarray*}\label{yu-6-7-3}
	\|w(\cdot,t)\|_{L^2(\triangle_R(x_0))}^2\leq
	C\left(1+\frac{1}{T-t_0}\right)\int_0^t\|w(\cdot,s)\|^2_{L^2(\triangle_R(x_0))}ds
	+\frac{CT}{T-t_0}\sup_{s\in[0,T]}\|u(\cdot,s)\|^2_{L^2(\triangle_R(x_0))}
\end{eqnarray*}
 	with a generic constant $C>0$. By the Gronwall inequality, we get (\ref{yu-6-7-6}) immediately. 
\vskip 5pt
\emph{Case II. $b(\cdot)\in L^{N+\delta}(\Omega)$ and $\|b(\cdot)\|_{L^{N+\delta}(\Omega)}\leq\Lambda_2$.} We first note that, by using a standard scaling technique
to (\ref{yu-9-26-2}), without lose of generality, one has
\begin{equation}\label{yu-9-29-1}
	\Gamma_2(\triangle_r(x_0),N,\eta)=\Gamma_2(\triangle_1(0),N,\eta) r^{-\frac{2N}{N+2\eta}}
	\;\;\mbox{for each}\;\;r\in(0,1).
\end{equation}
	Multiplying first (\ref{yu-6-6-7}) by $w$ and then integrating by parts over $\triangle_R(x_0)\times(0,t)$, along with (\ref{yu-9-26-2}) (by letting $\epsilon=\frac{1}{2}$ there) and (\ref{yu-9-29-1}), we have  
        \begin{eqnarray*}\label{yu-6-7-1}
	&\;&\frac{1}{2}\int_{\triangle_R(x_0)}l(x)|w(x,t)|^2dx+\int_0^t\int_{\triangle_R(x_0)}\nabla w\cdot(A\nabla w)dxds\nonumber\\
	&\leq&CR^{-\frac{2N}{N+1}}\|b\|_{L^{\frac{N+1}{2}}(\triangle_R(x_0))}\int_0^t
	\|w\|^{\frac{2}{N+1}}_{L^2(\triangle_R(x_0))}\|w\|^{\frac{2N}{N+1}}_{H^1(\triangle_R(x_0))}ds
	+\int_0^t\int_{\triangle_R(x_0)}l\eta_tuwdxds\nonumber\\
	&\leq&C\epsilon^{-N}R^{N+1}\|b\|_{L^{N}(\triangle_R(x_0))}^{N+1}\int_0^t\|w\|^2_{L^2(\triangle_R(x_0))}ds
	+\epsilon R^{-2}\int_0^t\|w\|_{H^1(\triangle_R(x_0))}^2ds\nonumber\\
	&\;&+\frac{1}{2}\int_0^t\int_{\triangle_R(x_0)}l|\eta_t||u|^2dxds
	+\frac{1}{2}\int_0^t\int_{\triangle_R(x_0)}l|\eta_t||w|^2dxds\nonumber\\
	&\leq&\left(C\epsilon^{-N}R^{N+1}\Lambda_2^{N+1}+\frac{C}{2(T-t_0)}
	+\epsilon R^{-2}\right)\int_0^t\|w\|^2_{L^2(\triangle_R(x_0))}ds\nonumber\\
	&\;&+\epsilon R^{-2}\Lambda_1\int_0^t\int_{\triangle_R(x_0)}\nabla w\cdot (A\nabla w)dxds
	+\frac{CT}{T-t_0}\sup_{s\in[0,T]}\|u(\cdot,s)\|^2_{L^2(\triangle_R(x_0))}
	\qquad \text{for any}\;\;\epsilon>0.
\end{eqnarray*}
	 Taking
	$\epsilon=\frac{R^2}{2\Lambda_1}$
    	in the above inequality,  we  obtain 
\begin{multline*}\label{yu-6-7-3}
	\|w(\cdot,t)\|_{L^2(\triangle_R(x_0))}^2\leq
	C\left(R^{1-N}+\frac{1}{T-t_0}\right)\int_0^t\|w(\cdot,s)\|^2_{L^2(\triangle_R(x_0))}ds\\
	+\frac{CT}{T-t_0}\sup_{s\in[0,T]}\|u(\cdot,s)\|^2_{L^2(\triangle_R(x_0))}
\end{multline*}
with a generic constant $C>0$.
By the Gronwall inequality, we get (\ref{yu-6-7-6}) immediately. 
Hence, from (\ref{yu-6-7-6}) and the definition of $w$, we know that for any $t\in[0,T]$,
\begin{equation}\label{yu-6-7-11-1}
	\|v(\cdot,t)\|^2_{L^2(\triangle_R(x_0))}\leq C\left(1+\frac{T}{T-t_0}\right)e^{C\left(R^{1-N}+\frac{1}{T-t_0}\right)T}
	\sup_{s\in[0,T]}\|u(\cdot,s)\|^2_{L^2(\triangle_R(x_0))}
\end{equation}
with a generic constant $C>0$.
\par
	Next, we show that 
\begin{eqnarray}\label{yu-6-8-3}
	\|\nabla w(\cdot,t)\|_{L^2(\triangle_R(x_0))}^2
	\leq \left(1+\frac{1}{T-t_0}\right)\frac{CT}{T-t_0}e^{CR^{1-N}T\left(1+\frac{1}{T-t_0}\right)}
	\sup_{s\in[0,T]}\|u(\cdot,s)\|^2_{L^2(\triangle_R(x_0))}.
\end{eqnarray}
Which, along  with the definition of $w$, gives that for each $t\in[0,T]$,  
\begin{multline}\label{yu-6-8-4}
	\|\nabla v(\cdot,t)\|_{L^2(\triangle_R(x_0))}^2\leq 2\|\nabla w(\cdot,t)\|^2_{L^2(\triangle_R(x_0))}+2\|\nabla u(\cdot,t)\|_{L^2(\triangle_R(x_0))}^2\\
	\leq \left(1+\frac{1}{T-t_0}\right)\frac{CT}{T-t_0}e^{C\left(R^{1-N}+\frac{1}{T-t_0}\right)T}
	\sup_{s\in[0,T]}\|u(\cdot,s)\|^2_{H^1(\triangle_R(x_0))}
\end{multline}
with a generic constant $C>0$.
Hence,  the desired estimate \eqref{yu-6-18-1} follows from  (\ref{yu-6-7-11-1}) and (\ref{yu-6-8-4})
when $t\in[0,T]$. We also divide the proof of (\ref{yu-6-8-3}) into two cases under the assumptions in 
(\ref{yu-6-24-1-1-b}). 
\vskip 5pt
	\emph{Case I.  $|b(x)|\leq \frac{\Lambda_2}{|x|}$ a.e. $x\in\Omega$.}
Multiplying first (\ref{yu-6-6-7}) by $w_t$ and then integrating by parts  over $\triangle_R(x_0)\times(0,t)$,  
	we find 
\begin{eqnarray*}\label{yu-6-7-12}
	&\;&\int_0^t\int_{\triangle_R(x_0)}l|w_t|^2dxds
	+\frac{1}{2}\int_0^t\int_{\triangle_R(x_0)}[\nabla w\cdot(A\nabla w)]_tdxds\nonumber\\
	&\leq&\frac{1}{2}\epsilon\int_0^t\int_{\triangle_R(x_0)}
	|x|^{-2}|w|^2dxds+\frac{\Lambda_2^2\Lambda_3
	+\frac{C}{T-t_0}}{2\epsilon}\int_0^t\int_{\triangle_R(x_0)}l(x)|w_t|^2dxds\nonumber\\
	&\;&+\frac{C\epsilon}{2(T-t_0)}\int_0^t\int_{\triangle_R(x_0)}|u|^2dxds\qquad \text{for any}\;\;\epsilon>0
\end{eqnarray*}
with a generic constant $C>0$.
	Letting
	$\epsilon=\frac{\Lambda_2^2\Lambda_3+\frac{C}{T-t_0}}{2}$
	in the inequality above, combined with the Hardy inequality in Lemma \ref{hardy}, leads to 
\begin{eqnarray*}\label{yu-6-8-1}
	&\;&\int_{\triangle_R(x_0)}\nabla w(x,t)\cdot(A(x)\nabla w(x,t))dx\nonumber\\
	&\leq&C\left(1+\frac{1}{T-t_0}\right)
	\int_0^t\int_{\triangle_R(x_0)}|\nabla w|^2dxds+\left(1+\frac{1}{T-t_0}\right)\frac{CT}{T-t_0}
	\sup_{s\in[0,T]}\|u(\cdot,s)\|^2_{L^2(\triangle_R(x_0))},
\end{eqnarray*}	
for a generic constant $C>0$.
	This, together with the uniform ellipticity condition \eqref{yu-11-28-2}, means that 
\begin{eqnarray*}\label{yu-6-8-2}
	\|\nabla w(\cdot,t)\|_{L^2(\triangle_R(x_0))}^2&\leq&
	C\left(1+\frac{1}{T-t_0}\right)
	\int_0^t\|\nabla w(\cdot,s)\|^2_{L^2(\triangle_R(x_0))}ds\nonumber\\
	&\;&+\left(1+\frac{1}{T-t_0}\right)\frac{CT}{T-t_0}
	\sup_{s\in[0,T]}\|u(\cdot,s)\|^2_{L^2(\triangle_R(x_0))}.
\end{eqnarray*}
By the Gronwall inequality, we get (\ref{yu-6-8-3}). 
\vskip 5pt
\emph{Case II. $b(\cdot)\in L^{N+\delta}(\Omega)$ and $\|b(\cdot)\|_{L^{N+\delta}(\Omega)}\leq\Lambda_2$.} 
	Multiplying (\ref{yu-6-6-7}) by $w_t$ and integrating by parts over 
	$\triangle_R(x_0)\times (0,t)$, we have 
\begin{eqnarray*}\label{yu-9-30-1}
	&\;&\int_0^t\int_{\triangle_R(x_0)}l|w_t|^2dxds+\frac{1}{2}\int_0^t\int_{\triangle_R(x_0)}
	[\nabla w\cdot (A\nabla w)]_tdxds\nonumber\\
	&\leq&\frac{\epsilon}{2}\int_0^t\int_{\triangle_R(x_0)}|b|^2|w|^2dxds+\frac{\Lambda_3+\frac{C}{T-t_0}}{2\epsilon}
	\int_0^t\int_{\triangle_R(x_0)}l|w_t|^2dxds\nonumber\\
	&\;&+\frac{C\epsilon}{2(T-t_0)}\int_0^t\int_{\triangle_R(x_0)}|u|^2dxds\nonumber\\
	&\leq&\frac{C\epsilon R^{-2}\Lambda_2^{\frac{1}{2}}}{2}\int_0^t\int_{\triangle_R(x_0))}|w|^2dxds
	+\frac{C\epsilon R^{-2}\Lambda_1\Lambda_2^{\frac{1}{2}}}{2}
	\int_0^t\int_{\triangle_R(x_0)}\nabla w\cdot (A\nabla w)dxdt\nonumber\\
	&\;&+\frac{\Lambda_3+\frac{C}{T-t_0}}{2\epsilon}\int_0^t\int_{\triangle_R(x_0)}l|w_t|^2dxds
	+\frac{C\epsilon}{2(T-t_0)}\int_0^t\int_{\triangle_R(x_0)}|u|^2dxds. 
\end{eqnarray*}
	Here, we used (\ref{yu-9-26-2}) and (\ref{yu-9-29-1}). Taking $\epsilon=\frac{\Lambda_3+\frac{C}{T-t_0}}{2}$ in the above inequality, 
	by (\ref{yu-6-7-6}) we get 
\begin{eqnarray*}\label{yu-9-30-2}
	&\;&\int_{\triangle_R(x_0)}\nabla w(x,t)\cdot(A\nabla w(x,t))dx\nonumber\\
	&\leq&CR^{-2}\left(1+\frac{1}{T-t_0}\right)\int_0^t\int_{\triangle_R(x_0)}(|w|^2+\nabla w\cdot (A\nabla w))dxds\nonumber\\
	&\;&+\frac{CT}{T-t_0}\left(1+\frac{1}{T-t_0}\right)\sup_{s\in[0,T]}\|u(\cdot,s)\|_{L^2(\triangle_R(x_0)}^2\nonumber\\
	&\leq&CR^{1-N}\left(1+\frac{1}{T-t_0}\right)\int_0^t\int_{\triangle_R(x_0)}\nabla w\cdot (A\nabla w)dxds\nonumber\\
	&\;&+C\frac{T}{T-t_0}\left(1+\frac{1}{T-t_0}\right)e^{C\left(R^{1-N}+\frac{1}{T-t_0}\right)T}
	\sup_{s\in[0,T]}\|u(\cdot,s)\|_{L^2(\Omega)}^2. 
\end{eqnarray*}
	By the Gronwall inequality, we get (\ref{yu-6-8-3}). 

\medskip

\vskip 5pt
 \textbf{Step 2. To prove \eqref{yu-6-18-1} when $t\geq T$.}
 \vskip 5pt
 
Define for any  $f,g\in C_0^\infty(\triangle_R(x_0))$,
$$
\langle f,g\rangle_{\mathcal L^2(\triangle_R(x_0))} :=\int_{\triangle_R(x_0)}l(x)f(x)g(x)dx \quad \text{and}\quad
\|f\|_{\mathcal L^2(\triangle_R(x_0)}:=\langle f,f\rangle_{\mathcal L^2(\triangle_R(x_0))}^{1/2}.$$
Set
$\mathcal{L}^2(\triangle_{R}(x_0))=\overline{C_0^\infty(\triangle_R(x_0))}^{\|\cdot\|_{\mathcal L^2(\triangle_R(x_0))}}
$.
Since $l$ is positive, it is clear that $\mathcal{L}^2(\triangle_R(x_0))=L^2(\triangle_R(x_0))$ with an equivalent norm. 
 Denoting $\mathcal{A}=-l^{-1}[\mbox{div}(A\nabla)-b]$,
we claim that 
	there is a generic constant $C>0$ (independent of $R$) such that  
 \begin{equation}\label{yu-6-7-9}
 	\langle \mathcal{A}f,f\rangle_{\mathcal{L}^2(\triangle_R(x_0))}\geq CR^{-2}\|f\|^2_{L^2(\triangle_R(x_0))}
	\;\;\mbox{for each}\;\;f\in H_{0}^1(\triangle_R(x_0))\cap H^2(\triangle_R(x_0)).
 \end{equation}
 	We also divide its proof into two cases. 
\vskip 5pt
        \emph{Case I.  $|b(x)|\leq \frac{\Lambda_2}{|x|}$ a.e. $x\in\Omega$.}
        In this case, by Lemma \ref{hardy}, we find that for each $\epsilon>0$, 
$$
	\langle \mathcal{A} f,f\rangle_{\mathcal{L}(\triangle_R(x_0))}\geq \Lambda_1^{-1}\int_{\triangle_{R}(x_0)}
	|\nabla f|^2dx-\frac{2\epsilon}{(N-2)^2}\int_{\triangle_R(x_0)}|\nabla f|^2dx-\frac{\Lambda_2^2}{2\epsilon}
	\int_{\triangle_R(x_0)}|f|^2dx,
$$
       for any $f\in H_0^1(\triangle_R(x_0))\cap H^2(\triangle_R(x_0))$.  Letting 
       $\epsilon=\frac{(N-2)^2}{4\Lambda_1}$ in the above inequality, by 
        the Poincar\'e inequality 
\begin{equation}\label{yu-11-30-b-1}
    \int_{\triangle_R(x_0)}|f(x)|^2dx\leq \left(\frac{2R}{N}\right)^2\int_{\triangle_R(x_0)}|\nabla f(x)|^2dx\;\;\mbox{for each}\;\;f\in H_0^1(\triangle_R(x_0)),
\end{equation}	 
         we derive 
\begin{equation*}\label{yu-10-12-1}
	\langle\mathcal{A}f,f\rangle_{\mathcal{L}^2(\triangle_R(x_0))}\geq
	\left[\frac{\Lambda_1^{-1}}{2}-\frac{8\Lambda_1\Lambda_2^2R^2}{(N-2)^2N^2}\right]\int_{\triangle_R(x_0)}
	|\nabla f|^2dx. 
\end{equation*}
 	From the definition of $R_0$ given in (\ref{du8141}), and (\ref{yu-11-30-b-1}), we can conclude
	the claim (\ref{yu-6-7-9}). 
\vskip 5pt
\emph{Case II. $b(\cdot)\in L^{N+\delta}(\Omega)$ and $\|b(\cdot)\|_{L^{N+\delta}(\Omega)}\leq \Lambda_2$.} 	
	By using (\ref{yu-9-26-2}), (\ref{yu-9-29-1}) and (\ref{yu-11-30-b-1}), we have 
\begin{eqnarray*}\label{yu-10-12-2}
	\int_{\triangle_R(x_0)}|b||f|^2dx&\leq& \Gamma_2\left(\triangle_R(x_0),N,\frac{N}{2}\right)
	\|b\|_{L^{N}(\triangle_R(x_0))}\|f\|_{L^2(\triangle_R(x_0))}
	\|f\|_{H^1_0(\triangle_R(x_0))}\nonumber\\
	&\leq&\sqrt{2}\Gamma_2\left(\triangle_1(0),N,\frac{N}{2}\right)\|b\|_{L^{N}(\triangle_R(x_0))}\|\nabla f\|_{L^2(\triangle_R(x_0))}^2\nonumber\\
	&\leq&\sqrt{2}\Theta_N^{\frac{N\delta}{N+\delta}}\Gamma_2\left(\triangle_1(0),N,\frac{N}{2}\right)R^{\frac{\delta}{N+\delta}}\|b\|_{L^{N+\delta}(\Omega)}\|\nabla f\|_{L^2(\triangle_R(x_0))}^2.
\end{eqnarray*}
	From the definition of $R_0$, we have 
$$
	\int_{\triangle_R(x_0)}|b||f|^2dx\leq \frac{\Lambda_1^{-1}}{8}\|\nabla f\|^2_{L^2(\triangle_R(x_0))}.
$$
	This implies 
$$
	\langle \mathcal{A}f, f\rangle_{\mathcal{L}^2(\triangle_R(x_0))}\geq \frac{7\Lambda_1^{-1}}{8}\int_{\triangle_R(x_0)}|\nabla f|^2dx.
$$
	and then (\ref{yu-6-7-9}) holds.

    As a consequence of \eqref{yu-6-7-9}, we see that the inverse of $\mathcal A$ is positive, self-adjoint and compact in $\mathcal L^2(\triangle_R(x_0))$.
By the spectral theorem for compact self-adjoint operators,	there are  eigenvalues 
	$\{\mu_i\}_{i\in\mathbb{N}^+}\subset \mathbb{R}^+$ and eigenfunctions  
	$\{f_i\}_{i\in\mathbb{N}^+}\subset H_0^1(\triangle_R(x_0))$, which make up an orthogonal basis of $\mathcal{L}^2(\triangle_R(x_0))$,
such that 
 \begin{equation}\label{yu-6-7-10}
 \begin{cases}
 	-\mathcal{A}f_i=\mu_if_i\;\;\mbox{and}\;\;\|f_i\|_{\mathcal{L}^2(\triangle_R(x_0))}=1&\mbox{for each}\;\;i\in\mathbb{N}^+,\\
CR^{-2}< \mu_1\leq \mu_2\leq \cdots
         \leq \mu_i\to+\infty&\mbox{as}\;\;i\to+\infty.
 \end{cases}
 \end{equation}
 Then, by the formula of Fourier decomposition, 
	the solution $w$  of (\ref{yu-6-6-7}) in $[T,+\infty)$ is given by
\begin{equation*}\label{yu-6-12-3}
	w(\cdot,t)=\sum_{i=1}^\infty\langle w(\cdot,T),f_i\rangle_{\mathcal{L}^2(\triangle_R(x_0))}e^{-\mu_i(t-T)}f_i	\quad\text{in}\;\;\triangle_R(x_0)\;\;\mbox{for each}\;\;t\in[T,+\infty).
\end{equation*}
    	Hence, we deduce that for each $t\in[T,+\infty)$,
\begin{eqnarray}\label{yu-6-12-4}
	\|w(\cdot,t)\|^2_{\mathcal{L}^2(\triangle_R(x_0))}
	\leq e^{-CR^{-2}(t-T)}\|w(\cdot,T)\|_{\mathcal{L}^2(\triangle_R(x_0))}^2
\end{eqnarray}
   and
\begin{eqnarray*}
	w_t(\cdot,t)=-\sum_{i=1}^\infty\mu_i\langle w(\cdot,T),f_i\rangle_{\mathcal{L}^2(\triangle_R(x_0))}e^{-\mu_i(t-T)}f_i.
\end{eqnarray*}
        It follows that
\begin{eqnarray}\label{yu-6-13-1}
	-\langle w(\cdot,t),w_t(\cdot,t)\rangle_{\mathcal{L}^2(\triangle_R(x_0))}
	=\sum_{i\in\mathbb{N}^+}\mu_i|\langle w(\cdot,T),f_i\rangle_{\mathcal{L}^2(\triangle_R(x_0))}|^2
	e^{-2\mu_i(t-T)},
\end{eqnarray}
for each $t\in[T,+\infty)$. In particular, taking $t=T$ in the above identify leads to 
\begin{equation}\label{yu-6-13-2}
	-\langle w(\cdot,T),w_t(\cdot,T)\rangle_{\mathcal{L}^2(\triangle_R(x_0))}
	=\sum_{i\in\mathbb{N}^+}\mu_i|\langle w(\cdot,T),f_i\rangle_{\mathcal{L}^2(\triangle_R(x_0))}|^2.	
\end{equation} 
    Meanwhile, it follows from (\ref{yu-6-6-7}) and Lemma \ref{hardy} that 
\begin{eqnarray}\label{yu-6-13-3}
-\langle w(\cdot,T),w_t(\cdot,T)\rangle_{\mathcal{L}^2(\triangle_R(x_0))}\leq C\|w(\cdot,T)\|^2_{H_{0}^1(\triangle_R(x_0))}
\end{eqnarray}
with a generic constant $C>0$.
    	From (\ref{yu-6-13-2}) and (\ref{yu-6-13-3}), we have 
\begin{equation*}\label{yu-6-14-1}
	\sum_{i=1}^\infty\mu_i|\langle w(\cdot,T),f_i\rangle_{\mathcal{L}^2(\triangle_R(x_0))}|^2
	\leq C\|w(\cdot,T)\|^2_{H_{0}^1(\triangle_R(x_0))}.
\end{equation*}    
    	This, together with  (\ref{yu-6-13-1}), gives 
\begin{equation}\label{yu-6-14-2}
	-\langle w(\cdot,t),w_t(\cdot,t)\rangle_{\mathcal{L}^2(\triangle_R(x_0))}
	\leq Ce^{-CR^{-2}(t-T)}\|w(\cdot,T)\|^2_{H_{0}^1(\triangle_R(x_0))},
\end{equation}   
	for each $t\in[T,+\infty)$. On the other hand, by  (\ref{yu-6-6-7}) and (\ref{yu-6-7-9}), we see that for each $t\in[T,+\infty)$,
 \begin{equation}\label{yu-6-14-3}
 	-\langle w(\cdot,t),w_t(\cdot,t)\rangle_{\mathcal{L}^2(\triangle_R(x_0))}=\langle w(\cdot,t),-\mathcal{A}w(\cdot,t)\rangle_{\mathcal{L}^2(\triangle_R(x_0))}
	\geq C\|\nabla w(\cdot,t)\|^2_{L^2(\triangle_R(x_0))}.
 \end{equation}
    	By (\ref{yu-6-14-2}) and (\ref{yu-6-14-3}), we find that for each $t\in[T,\infty)$, 
 \begin{equation*}\label{yu-6-14-4}
 	\|\nabla w(\cdot,t)\|^2_{L^2(\triangle_R(x_0))}
	\leq C
	e^{-CR^{-2}(t-T)}\|w(\cdot,T)\|^2_{H_{0}^1(\triangle_R(x_0))} 
\end{equation*}
    	This, together with (\ref{yu-6-12-4}), means that 
\begin{equation*}\label{yu-6-18-2}
	\|w(\cdot,t)\|_{H_{0}^1(\triangle_R(x_0))}^2\leq Ce^{-CR^{-2}(t-T)}\|w(\cdot,T)\|^2_{H_{0}^1(\triangle_R(x_0))}.
\end{equation*}
By the fact that  $w(\cdot,t)=v(\cdot,t)$ for each $t\geq T$, we conclude the desired result.
\end{proof}
\medskip
We next define 
\begin{equation*}\label{yu-6-18-5}
	\tilde{v}(\cdot,t)=
\begin{cases}
	v(\cdot,t)&\mbox{if}\;\;t\geq 0,\\
	0&\mbox{if}\;\;t<0,
\end{cases}
\end{equation*}
where $v$ is the solution of \eqref{yu-11-29-4}.
By Lemma \ref{yu-lemma-6-10-1}, we can take the Fourier transform of	$\tilde{v}$ with respect to the time variable $t\in\mathbb R$
\begin{equation*}\label{yu-6-18-6}
	\hat{v}(x,\mu)=\int_{\mathbb{R}}e^{-i\mu t}\tilde{v}(x,t)dt\quad\text{for}\;\;(x,\mu)\in\triangle_R(x_0)\times\mathbb R.
\end{equation*}
Then, we have
\begin{lemma}\label{yu-lemma-6-18-1}
There exists a generic constant $C>0$ such that, for each  $\mu\in\mathbb{R}$,   
the following two estimates hold:
\begin{equation}\label{yu-6-23-5}
	\|\nabla \hat{v}(\cdot,\mu)\|_{L^2(\triangle_r(x_0))}\leq \frac{C(1+\sqrt{|\mu|})}{R-2r}\|\hat{v}(\cdot,\mu)\|_{L^2(\triangle_{\frac{R}{2}}(x_0))}\quad \text{for all} \;\;0<r <R/2,
\end{equation} 
and
\begin{equation}\label{yu-6-22-16}
	\|\hat{v}(\cdot,\mu)\|_{L^2(\triangle_{\frac{R}{2}}(x_0))}\leq CT^{-\frac{1}{2}}
	e^{CR^{1-N}\left(1+\frac{1}{T-t_0}\right)T-\frac{\sqrt{|\mu|}R}{4e\Pi}}F(R)
\end{equation}
with a positive constant $\Pi$. 
\end{lemma}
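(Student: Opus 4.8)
The plan is the following. First I would identify the equation satisfied by $\hat v(\cdot,\mu)$. Since $v(\cdot,0)=0$, the zero extension $\tilde v$ is continuous across $t=0$, its distributional time derivative carries no singular part there, and the exponential decay \eqref{yu-6-18-1} from Lemma \ref{yu-lemma-6-10-1} guarantees $\tilde v\in L^1(\mathbb R;H^1(\triangle_R(x_0)))$; hence $\tilde v$ solves $l\partial_t\tilde v-\mbox{div}(A\nabla\tilde v)+b\tilde v=0$ in $\triangle_R(x_0)\times\mathbb R$, and taking the Fourier transform in $t$ gives, for each $\mu\in\mathbb R$,
\begin{equation*}
-\mbox{div}(A\nabla\hat v(\cdot,\mu))+b\,\hat v(\cdot,\mu)+i\mu\,l\,\hat v(\cdot,\mu)=0\quad\mbox{in}\;\;\triangle_R(x_0)
\end{equation*}
(with $\hat v(\cdot,\mu)=\widehat{\eta u}(\cdot,\mu)$ on $\partial\triangle_R(x_0)$, which will not be needed below, since both estimates are interior). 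Integrating \eqref{yu-6-18-1} in $t$ and using Minkowski's integral inequality together with $\int_0^\infty e^{-CR^{-2}(t-T)^+}\,dt\le T+CR^2$, I would record the crude $\mu$-independent bound
\begin{equation*}
\|\hat v(\cdot,\mu)\|_{L^2(\triangle_R(x_0))}\le\int_0^\infty\|v(\cdot,t)\|_{L^2(\triangle_R(x_0))}\,dt\le CT^{-\frac{1}{2}}e^{CR^{1-N}\left(1+\frac{1}{T-t_0}\right)T}F(R)=:M,
\end{equation*}
the polynomial prefactor $T^{-1/2}(T+CR^2)$ and the factor $R^{-1}$ being absorbed into the exponential since $R<1$ (so $R^{1-N}>1$).

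Second, for \eqref{yu-6-23-5} I would use an interior Caccioppoli estimate for the elliptic equation above. Fixing $0<r<R/2$ and a cutoff $\zeta\in C_0^\infty(\triangle_{R/2}(x_0);[0,1])$ with $\zeta\equiv1$ on $\triangle_r(x_0)$ and $|\nabla\zeta|\le C/(R-2r)$, I would test against $\zeta^2\overline{\hat v}$ and take the real part, whereupon the term $i\mu l|\hat v|^2\zeta^2$ drops out and one is left with $\int\zeta^2|\nabla\hat v|^2\,dx\le C\int|\nabla\zeta|^2|\hat v|^2\,dx+C\int|b|\,\zeta^2|\hat v|^2\,dx$. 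The potential term is absorbed exactly as in Step 2 of the proof of Lemma \ref{yu-lemma-6-10-1}, using Corollary \ref{yu-corollary-1} (and Lemma \ref{hardy} in case (ii) of \eqref{yu-6-24-1-1-b}) together with the smallness coming from $R<R_0$ in \eqref{du8141}. This already yields $\|\nabla\hat v\|_{L^2(\triangle_r(x_0))}\le C(R-2r)^{-1}\|\hat v\|_{L^2(\triangle_{R/2}(x_0))}$, hence \eqref{yu-6-23-5} a fortiori; the factor $1+\sqrt{|\mu|}$ is retained only because that is the form in which this estimate is invoked later.

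Third, and this is the heart of the matter, for \eqref{yu-6-22-16} I would run an energy estimate with an exponential spatial weight. Fix $|\mu|\ge1$ (for $|\mu|<1$ the claim is contained in $\|\hat v(\cdot,\mu)\|_{L^2}\le M$). Take $\zeta\in C_0^\infty(\triangle_R(x_0);[0,1])$ with $\zeta\equiv1$ on $\triangle_{3R/4}(x_0)$ and $|\nabla\zeta|\le C/R$, and a Lipschitz weight $\phi=\phi(x)\ge0$ with $\phi\equiv\gamma\sqrt{|\mu|}R$ on $\triangle_{R/2}(x_0)$, $\phi\equiv0$ on $\triangle_R(x_0)\setminus\triangle_{3R/4}(x_0)$ and $|\nabla\phi|\le C\gamma\sqrt{|\mu|}$, with $\gamma\in(0,1)$ small. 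Testing the equation for $\hat v$ against $e^{2\phi}\zeta^2\overline{\hat v}$ (no boundary term appears, since $\zeta$ is compactly supported) and separating real and imaginary parts, the imaginary part supplies the crucial quantity $|\mu|\int l\,e^{2\phi}\zeta^2|\hat v|^2$, against which the weight-gradient error, of size $\sim C\gamma^2|\mu|\int e^{2\phi}\zeta^2|\hat v|^2$, is absorbed once $\gamma$ is chosen small depending only on $\Lambda_1,\Lambda_3,N$; the potential term $\int|b|e^{2\phi}\zeta^2|\hat v|^2$ is absorbed by applying \eqref{yu-9-26-2} to $e^{\phi}\zeta\hat v$ together with $R<R_0$. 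What remains is
\begin{equation*}
|\mu|\int_{\triangle_R(x_0)}l\,e^{2\phi}\zeta^2|\hat v|^2\,dx\le C\int_{\triangle_R(x_0)}|\nabla\zeta|^2|\hat v|^2\,dx\le\frac{C}{R^2}\,\|\hat v(\cdot,\mu)\|_{L^2(\triangle_R(x_0))}^2\le\frac{C}{R^2}\,M^2,
\end{equation*}
because $\nabla\zeta$ lives in $\triangle_R(x_0)\setminus\triangle_{3R/4}(x_0)$, where $\phi=0$. Since $e^{2\phi}\ge e^{2\gamma\sqrt{|\mu|}R}$ and $\zeta\equiv1$ on $\triangle_{R/2}(x_0)$, this gives $\|\hat v(\cdot,\mu)\|_{L^2(\triangle_{R/2}(x_0))}\le CR^{-1}|\mu|^{-1/2}e^{-\gamma\sqrt{|\mu|}R}M\le Ce^{-\gamma\sqrt{|\mu|}R}M$ (absorbing $R^{-1}|\mu|^{-1/2}$ via $|\mu|\ge1$ and $R^{-1}\le e^{CR^{1-N}}$); substituting $M$ and choosing $\Pi$ with $\tfrac{1}{4e\Pi}\le\gamma$ yields \eqref{yu-6-22-16}.

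The hardest part is the third step. The weight $\phi$ must be calibrated so that, in the imaginary part of the tested identity, the term $\mu\int l\,e^{2\phi}\zeta^2|\hat v|^2$ genuinely dominates the error generated by $\nabla\phi$ — this is what pins the decay rate $1/(4e\Pi)$ to the ellipticity constants $\Lambda_1,\Lambda_3$ — and, simultaneously, the unbounded potential $b$ (in \emph{both} cases of \eqref{yu-6-24-1-1-b}) must be reabsorbed \emph{after} multiplication by the large factor $e^{2\phi}$, which requires checking that \eqref{yu-9-26-2} applied to $e^{\phi}\zeta\hat v$ feeds no extra powers of $\sqrt{|\mu|}$ back into the estimate beyond the admissible $|\nabla\phi|^2\lesssim|\mu|$. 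Everything else is routine bookkeeping of constants, carried out separately in the two cases exactly as in the proof of Lemma \ref{yu-lemma-6-10-1}.
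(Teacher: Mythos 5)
Your proposal is correct in substance, but for the key decay estimate \eqref{yu-6-22-16} it takes a genuinely different route from the paper. The paper never tests the equation for $\hat v$ with an exponential weight; instead it introduces the auxiliary variable $\xi$ and the function $p(x,\xi,\mu)=e^{i\sqrt{|\mu|}\xi}\hat v(x,\mu)$, proves iterated Caccioppoli bounds for $p_j=\partial_\xi^j p$ on shrinking cylinders $D_j$ (estimate \eqref{yu-6-22-4}), converts these into factorial-type bounds \eqref{yu-6-22-9} for the derivatives of $P_\mu(\xi)=\int p\,\bar\varphi\,dx$, and then continues $P_\mu$ analytically to the strip of width $R/(2e\Pi)$ and evaluates at the purely imaginary point $\xi_0=-iR/(4e\Pi)$, which is exactly where the factor $e^{-\sqrt{|\mu|}R/(4e\Pi)}$ and the constant $\Pi=2\Lambda_3\sqrt{2(M_1+M_2)}$ come from. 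Your replacement is a direct Agmon-type weighted energy estimate exploiting the dissipative term $i\mu l\hat v$: the imaginary part of the tested identity produces $|\mu|\int l e^{2\phi}\zeta^2|\hat v|^2$, the real part gives the weighted Caccioppoli bound needed to absorb the $\nabla\phi$-errors (of size $\gamma^2|\mu|$) and the potential (via \eqref{yu-9-26-2}, \eqref{yu-9-29-1}, $R<R_0$, and Hardy in case (ii), after extending $e^\phi\zeta\hat v$ by zero), and the $\nabla\zeta$-terms live where $\phi=0$ so they are paid for by the crude bound $M$, which coincides with the paper's \eqref{yu-6-22-5}. This is shorter and more elementary — no derivative iteration, no analytic continuation — and it yields the statement with $\Pi$ fixed by the data through $\frac{1}{4e\Pi}\le\gamma$, which is all the lemma (and its later use through $\kappa$) requires; the price is that the decay rate is whatever small $\gamma$ the absorption allows, rather than the explicit strip width the paper extracts. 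Two small points to make explicit when writing it up: the imaginary-part error is really $C\gamma\sqrt{|\mu|}\,X^{1/2}Y^{1/2}$ (with $X,Y$ the weighted $L^2$ norms of $\hat v$ and $\nabla\hat v$), so the real-part bound $Y\lesssim\gamma^2|\mu|X+M^2/R^2$ must be substituted before absorbing, and in the Hardy case the zeroth-order remainder $C X$ is absorbed only for $|\mu|$ above a data-dependent constant, the complementary range being covered by the crude bound $M$ exactly as you already do for $|\mu|<1$. Incidentally, your real-part trick in the proof of \eqref{yu-6-23-5} gives a $\mu$-independent Caccioppoli inequality, which is stronger than the paper's version (the paper keeps the $\Lambda_3|\mu|$ term and hence the factor $1+\sqrt{|\mu|}$); this is harmless, since the stated form follows a fortiori.
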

\begin{proof}
By \eqref{yu-11-29-4}, we have that for each $\mu\in\mathbb R$,
\begin{equation}\label{yu-6-18-7}
	i\mu l(x)\hat{v}(x,\mu)-\mbox{div}(A(x)\nabla \hat{v}(x,\mu))
	+b(x)\hat{v}(x,\mu)=0
	\;\;\mbox{in}\;\;\triangle_{R}(x_0). 
\end{equation}
Take arbitrarily $r\in(0,\frac{R}{2})$ and define a cutoff function $\psi \in C^\infty(\mathbb{R}^N;[0,1])$ verifying 
\begin{equation}\label{yu-6-23-1}
\begin{cases}
	\psi=1&\mbox{in}\;\;\overline{\triangle_r(x_0)},\\
	\psi=0&\mbox{in}\;\;\mathbb{R}^N\backslash \triangle_{\frac{R}{2}}(x_0),\\
	|\nabla\psi|\leq \frac{C}{R-2r}&\mbox{in}\;\;\mathbb{R}^N.
\end{cases}
\end{equation}
	Multiplying first (\ref{yu-6-18-7}) by $\bar{\hat{v}}\psi^2$ and then integrating by parts  over
	$\triangle_{\frac{R}{2}}(x_0)$, we have 
\begin{eqnarray*}\label{yu-6-23-2}
        \int_{\triangle_{\frac{R}{2}}(x_0)}\nabla\bar{\hat{v}}\cdot (A\nabla\hat{v})\psi^2dx
	+2\int_{\triangle_{\frac{R}{2}}(x_0)}\nabla\psi\cdot(A\nabla\hat{v})\bar{\hat{v}}\psi dx\nonumber\\
	=-i\int_{\triangle_{\frac{R}{2}}(x_0)}\mu l |\hat{v}|^2\psi^2dx
	-\int_{\triangle_{\frac{R}{2}}(x_0)}b|\hat{v}|^2\psi^2dx.
\end{eqnarray*}
	We divide the proof of (\ref{yu-6-23-5}) into two cases.
\vskip 5pt
	\emph{Case I.  $|b(x)|\leq \frac{\Lambda_2}{|x|}$ a.e. $x\in\Omega$.}
 	By (\ref{yu-11-28-2}), (\ref{yu-7-29-3}) and the Hardy inequality in Lemma \ref{hardy}, we derive that for each $\epsilon_1>0$ and $\epsilon_2>0$, 
\begin{eqnarray*}\label{yu-6-23-3}
	&\;&\Lambda_1^{-1}\int_{\triangle_{\frac{R}{2}}(x_0)}|\nabla\hat{v}|^2\psi^2dx\nonumber\\
	&\leq&2\Lambda_1\int_{\triangle_{\frac{R}{2}}(x_0)}|\nabla\hat{v}||\hat{v}||\nabla\psi||\psi|dx
	+\Lambda_3|\mu|\int_{\triangle_{\frac{R}{2}}(x_0)}|\hat{v}|^2\psi^2dx
	+\int_{\triangle_{\frac{R}{2}}(x_0)}|b||\hat{v}|^2\psi^2dx\nonumber\\
	&\leq&\epsilon_1\int_{\triangle_{\frac{R}{2}}(x_0)}|\nabla \hat{v}|^2\psi^2dx
	+\frac{\Lambda_1^2}{\epsilon_1}\int_{\triangle_{\frac{R}{2}}(x_0)}|\hat{v}|^2|\nabla\psi|^2dx
	+\epsilon_2\int_{\triangle_{\frac{R}{2}}(x_0)}|x|^{-2}|\hat{v}\psi|^2dx\nonumber\\
	&&\;\;\;\;+\left(\frac{\Lambda_2^2}{4\epsilon_2}+\Lambda_3|\mu|\right)\int_{\triangle_{\frac{R}{2}}(x_0)}|\hat{v}|^2\psi^2dx\nonumber\\
	&\leq&\left(\epsilon_1+\frac{8\epsilon_2}{(N-2)^2}\right)
	\int_{\triangle_{\frac{R}{2}}(x_0)}|\nabla\hat{v}|^2\psi^2dx
	+\left(\frac{8\epsilon_2}{(N-2)^2}+\frac{\Lambda_1^2}{\epsilon_1}\right)
	\int_{\triangle_{\frac{R}{2}}(x_0)}|\hat{v}|^2|\nabla \psi|^2dx\nonumber\\
	&\;&+\left(\frac{\Lambda_2^2}{4\epsilon_2}+\Lambda_3|\mu|\right)\int_{\triangle_{\frac{R}{2}}(x_0)}|\hat{v}|^2\psi^2dx.
\end{eqnarray*}
Taking $\epsilon_1=\frac{1}{4\Lambda_1}$ and $\epsilon_2=\frac{(N-2)^2}{16\Lambda_1}$ in the above inequality, we derive
 (\ref{yu-6-23-5}).
\vskip 5pt
\emph{Case II. $b(\cdot)\in L^{N+\delta}(\Omega)$ and $\|b(\cdot)\|_{L^{N+\delta}(\Omega)}\leq\Lambda_2$.}
	By (\ref{yu-11-28-2}), (\ref{yu-7-29-3}) and (\ref{yu-9-26-2}) with $\eta=\frac{N}{2}$, we get 
\begin{eqnarray*}\label{yu-6-23-3}
	&\;&\Lambda_1^{-1}\int_{\triangle_{\frac{R}{2}}(x_0)}|\nabla\hat{v}|^2\psi^2dx\nonumber\\
	&\leq&2\Lambda_1\int_{\triangle_{\frac{R}{2}}(x_0)}|\nabla\hat{v}||\hat{v}||\nabla\psi||\psi|dx
	+\Lambda_3|\mu|\int_{\triangle_{\frac{R}{2}}(x_0)}|\hat{v}|^2\psi^2dx
	+\int_{\triangle_{\frac{R}{2}}(x_0)}|b||\hat{v}|^2\psi^2dx\nonumber\\	
	&=&2\Lambda_1\int_{\triangle_{\frac{R}{2}}(x_0)}|\nabla\hat{v}||\hat{v}||\nabla\psi||\psi|dx
	+\Lambda_3|\mu|\int_{\triangle_{\frac{R}{2}}(x_0)}|\hat{v}|^2\psi^2dx\nonumber\\
	&\;&+\Gamma_2\left(\triangle_1(0),N,\frac{N}{2}\right)\|b\|_{L^{N}(\triangle_{\frac{R}{2}}(x_0))}
	\left(\frac{R}{2}\right)^{-1}\|\hat{v}\psi\|_{L^2(\triangle_{\frac{R}{2}}(x_0))}
	\|\hat{v}\psi\|_{H^1(\triangle_{\frac{R}{2}}(x_0))}.\nonumber\\
\end{eqnarray*}	
Then for any $\epsilon>0$, we have
\begin{eqnarray*}	
&\;&\Lambda_1^{-1}\int_{\triangle_{\frac{R}{2}}(x_0)}|\nabla\hat{v}|^2\psi^2dx\nonumber\\
	&\leq&\epsilon\int_{\triangle_{\frac{R}{2}}(x_0)}|\nabla \hat{v}|^2\psi^2dx
	+\frac{\Lambda_1^2}{\epsilon}\int_{\triangle_{\frac{R}{2}}(x_0)}|\hat{v}|^2|\nabla \psi|^2dx
	+\Lambda_3|\mu|\int_{\triangle_{\frac{R}{2}}(x_0)}|\hat{v}|^2\psi^2dx\nonumber\\
	&\;&+2\sqrt{2}\Gamma_2\left(\triangle_1(0),N,\frac{N}{2}\right)\|b\|_{L^N(\triangle_{\frac{R}{2}}(x_0))}
	\int_{\triangle_{\frac{R}{2}}(x_0)}\left(|\nabla \hat{v}|^2\psi^2
	+|\hat{v}|^2|\nabla\psi|^2\right)dx\nonumber\\
	&\leq&\epsilon\int_{\triangle_{\frac{R}{2}}(x_0)}|\nabla \hat{v}|^2\psi^2dx
	+\frac{\Lambda_1^2}{\epsilon}\int_{\triangle_{\frac{R}{2}}(x_0)}|\hat{v}|^2|\nabla \psi|^2dx
	+\Lambda_3|\mu|\int_{\triangle_{\frac{R}{2}}(x_0)}|\hat{v}|^2\psi^2dx\nonumber\\
	&\;&+2\sqrt{2}\Theta_N^{\frac{N\delta}{N+\delta}}\Gamma_2\left(\triangle_1(0),N,\frac{N}{2}\right)
	\Lambda_2R^{\frac{\delta}{N+\delta}}
	\int_{\triangle_{\frac{R}{2}}(x_0)}\left(|\nabla \hat{v}|^2\psi^2
	+|\hat{v}|^2|\nabla\psi|^2\right)dx\nonumber\\
	&\leq&\epsilon\int_{\triangle_{\frac{R}{2}}(x_0)}|\nabla \hat{v}|^2\psi^2dx
	+\frac{\Lambda_1^2}{\epsilon}\int_{\triangle_{\frac{R}{2}}(x_0)}|\hat{v}|^2|\nabla \psi|^2dx
	+\Lambda_3|\mu|\int_{\triangle_{\frac{R}{2}}(x_0)}|\hat{v}|^2\psi^2dx\nonumber\\
	&\;&+\frac{\Lambda_1^{-1}}{4}\int_{\triangle_{\frac{R}{2}}(x_0)}\left(|\nabla \hat{v}|^2\psi^2
	+|\hat{v}|^2|\nabla\psi|^2\right)dx.
\end{eqnarray*}	
	Here, we used (\ref{yu-9-29-1}) and the definition of $R_0$. Taking $\epsilon=\frac{\Lambda_1^{-1}}{4}$ in the above inequality and using (\ref{yu-6-23-1}) lead to (\ref{yu-6-23-5}).

\medskip

Note that, when $\mu=0$, by Lemma \ref{yu-lemma-6-10-1} we have
\begin{eqnarray}\label{yu-6-22-15}
	\|\hat{v}(\cdot,0)\|_{L^2(\triangle_{R}(x_0))}\leq CT^{-\frac{1}{2}}e^{CR^{1-N}\left(1+\frac{1}{T-t_0}\right)T}
	F(R).
\end{eqnarray}
Thus it suffices to prove \eqref{yu-6-22-16} in the case that $\mu\neq0$.
To this end,  define for each $\mu\in\mathbb{R}\setminus\{0\}$,
\begin{equation*}\label{yu-6-19-1}
	p(x,\xi,\mu)=e^{i\sqrt{|\mu|}\xi}\hat{v}(x,\mu)\quad\text{for a.e.}\quad (x,\xi)\in\triangle_R(x_0)\times\mathbb R.
\end{equation*}
Then,  $p(\cdot,\cdot,\mu)$ verifies 
\begin{equation*}\label{yu-6-19-2}
	\mbox{div}(A\nabla p(\cdot,\cdot,\mu))+i\mbox{sign}(\mu)l\partial_{\xi\xi}p(\cdot,\cdot,\mu)
	-bp(\cdot,\cdot,\mu)=0\;\;\mbox{in}\;\;\triangle_R(x_0)\times\mathbb{R}.
\end{equation*}
Here 
\begin{equation*}\label{yu-6-19-3}
     \mbox{sign}(\mu):=
\begin{cases}
	1&\mbox{if}\;\;\mu>0,\\
	-1&\mbox{if}\;\;\mu<0.
\end{cases}
\end{equation*}

	Let $m\in\mathbb{N}^+$ and $a_j=1-\frac{j}{2m}$ for $j=0,1,\dots,m+1$. For each $j\in\{0,1,\cdots, m\}$, we define a cutoff function 
\begin{equation*}\label{yu-6-19-4}
	h_j(s):=
\begin{cases}
	0&\mbox{if}\;\;|s|>a_j,\\
	\frac{1}{2}\left[1+\cos\left(\frac{\pi(a_{j+1}-s)}{a_{j+1}-a_j}\right)\right]
	&\mbox{if}\;\;a_{j+1}\leq |s|\leq a_j,\\
	1&\mbox{if}\;\;|s|<a_{j+1}.
\end{cases}
\end{equation*}
Clearly,
\begin{equation*}\label{yu-6-20-2}
	|h_j'(s)|\leq m\pi \;\;\mbox{for any}\;\;s\in\mathbb{R}.
\end{equation*}
Denote
$p_j=\frac{\partial^jp}{\partial\xi^j}$, $j=0,1,\dots,m$. Then $p_j$ verifies 
\begin{equation}\label{yu-6-19-6}
	\mbox{div}(A\nabla p_j(\cdot,\cdot,\mu))+i\mbox{sign}(\mu)lp_{j+2}(\cdot,\cdot,\mu)
	-bp_j(\cdot,\cdot,\mu)=0\;\;\mbox{in}\;\;\triangle_R(x_0)\times\mathbb{R}.
\end{equation}	
	Let 
\begin{equation*}\label{yu-6-19-7}
	\eta_j(x,\xi)=h_j\left(\frac{|x-x_0|}{R}\right)h_j\left(\frac{\xi}{R}\right)\quad\text{for}\;\;(x,\xi)\in\triangle_R(x_0)\times\mathbb{R}.
\end{equation*}
	Multiplying first (\ref{yu-6-19-6}) by 
	$\bar{p}_j\eta_j^2$ and then integrating by parts over $D_j=\triangle_{a_jR}(x_0)\times(-a_jR,a_jR)$, we obtain
\begin{eqnarray}\label{yu-6-19-8}
	&\;&-\int_{D_j}\nabla\bar{p}_j\cdot(A\nabla p_j)\eta_j^2dxd\xi
	-i\mbox{sign}(\mu)\int_{D_j}l|p_{j+1}|^2\eta_j^2dxd\xi\nonumber\\
	&=&\int_{D_j}b|p_j|^2\eta_j^2dxd\xi
	+\int_{D_j}\nabla\eta_j^2\cdot(A\nabla p_i)\bar{p}_jdxd\xi+i\mbox{sign}(\mu)\int_{D_j}lp_{i+1}\partial_\xi\eta^2_j\bar{p}_jdxd\xi. 
\end{eqnarray}
Since $\nabla\bar{p}_j\cdot(A\nabla p_j)$ and $|p_{j+1}|^2$ are real-valued,
 by (\ref{yu-6-19-8}) we get 
\begin{eqnarray}\label{yu-6-19-9}
	&\;&\left(\int_{D_j}\nabla\bar{p}_j\cdot(A\nabla p_j)\eta_j^2dxd\xi\right)^2
	+|\mbox{sign}(\mu)|^2\left(\int_{D_j}l|p_{j+1}|^2\eta_j^2dxd\xi\right)^2\nonumber\\
	&\leq&3\left(\int_{D_j}|b||p_j|^2\eta_j^2dxd\xi\right)^2
	+3\left(\int_{D_j}|\nabla \eta_j^2\cdot(A\nabla p_j)||p_j|dxd\xi\right)^2\nonumber\\
	&\;&+3\left(\int_{D_j}l|p_{j+1}||p_j|\partial_{\xi}\eta^2_jdxd\xi\right)^2
	:=3\sum_{i=1}^3I_i.
\end{eqnarray}
\par
	Next, we will estimate $I_i$ $(i=1,2,3)$ one by one.
	For the term $I_1$, we  shall prove that 
\begin{eqnarray}\label{yu-10-15-1}
	I_1\leq \frac{\Lambda_1^{-2}}{16}\left(\int_{D_j}|\nabla p_j|^2\eta_j^2dxd\xi\right)^2
	+\frac{C(1+m^4)}{R^4}\int_{D_j}|p_j|^2dxd\xi.
\end{eqnarray}
	We divide its proof into two cases. 
\vskip 5pt
         \emph{Case I.   $|b(x)|\leq \frac{\Lambda_2}{|x|}$ a.e. $x\in\Omega$.}	By the Hardy inequality, we derive that 
 \begin{eqnarray*}\label{yu-6-20-3}
	&\;&\int_{D_j}|b||p_j|^2\eta_j^2dxd\xi
	\leq\frac{4\epsilon_1}{(N-2)^2}
	\int_{D_j}(|\nabla p_j|^2\eta_j^2+|p_j|^2|\nabla\eta_j|^2)dxd\xi
	+\frac{\Lambda_2^2}{2\epsilon_1}
	\int_{D_j}|p_j|^2\eta_j^2dxd\xi\nonumber\\
	&\leq&\frac{4\epsilon_1}{(N-2)^2}
	\int_{D_j}|\nabla p_j|^2\eta_j^2dxd\xi
	+\frac{[8\pi^2m^2\epsilon_1^2+\Lambda_2^2R^2(N-2)^2]}{2\epsilon_1R^2(N-2)^2}
	\int_{D_j}|p_j|^2dxd\xi,\;\;\forall \epsilon_1>0.
\end{eqnarray*}
	Therefore, 
\begin{eqnarray*}\label{yu-6-20-4}
	I_1&\leq&\frac{2^5\epsilon_1^2}{(N-2)^4}
	\left(\int_{D_j}|\nabla p_j|^2\eta_j^2dxd\xi\right)^2
        +\frac{[8\pi^2m^2\epsilon_1^2+\Lambda_2^2R^2(N-2)^2]^2}{2\epsilon_1^2R^4(N-2)^4}\left(\int_{D_j}|p_j|^2dxd\xi\right)^2,
\end{eqnarray*}
         Let $\epsilon_1=\frac{(N-2)^2}{2^5\Lambda_1}$, we derive 
         (\ref{yu-10-15-1}).  
\vskip 5pt
\emph{Case II. $b(\cdot)\in L^{N+\delta}(\Omega)$ and $\|b(\cdot)\|_{L^{N+\delta}(\Omega)}\leq\Lambda_2$.}         
         By (\ref{yu-9-26-2}), (\ref{yu-9-29-1}) and the definition of $R_0$, we have       
 \begin{eqnarray*}\label{yu-10-15-2}
 	&\;&\int_{D_j}|b||p_j|^2\eta_j^2dxd\xi\nonumber\\
	&\leq& \Gamma_2\left(\triangle_{a_jR}(x_0),N,\frac{N}{2}\right)
	\|b\|_{L^N(\triangle_{a_jR}(x_0))}\int_{-a_jR}^{a_jR}\|p_j\eta_j\|_{L^2(\triangle_{a_jR}(x_0))}
	\|p_j\eta_j\|_{H^1(\triangle_{a_jR}(x_0))}d\xi\nonumber\\
	&\leq&\sqrt{2}\Theta_N^{\frac{N\delta}{N+\delta}}\Gamma_2\left(\triangle_1(0),N,\frac{N}{2}\right)
	R^{\frac{\delta}{N+\delta}}\|b\|_{L^{N+\delta}(\triangle_{a_jR}(x_0)}
	\int_{-a_jR}^{a_jR}\|\nabla(p_j\eta_j)\|_{L^2(\triangle_{a_jR}(x_0))}^2d\xi\nonumber\\
	&\leq&2\sqrt{2}\Theta_N^{\frac{N\delta}{N+\delta}}\Gamma_2\left(\triangle_1(0),N,\frac{N}{2}\right)
	\Lambda_2R_0^{\frac{\delta}{N+\delta}}\int_{D_j}\left(|\nabla p_j|^2\eta_j^2+\frac{m^2\pi^2}{R^2}|p_j|^2\right)dxd\xi\nonumber\\
	&\leq&\frac{\Lambda_1^{-1}}{4}\int_{D_j}|\nabla p_j|^2\eta_j^2dx
	+\frac{\Lambda_1^{-1}m^2\pi^2}{4R^2}\int_{D_j}|p_j|^2dx,
 \end{eqnarray*}
         which  gives (\ref{yu-10-15-1}).

          Moreover, 
\begin{eqnarray*}\label{yu-6-21-1}
	&\;&\int_{D_j}|\nabla \eta_j^2\cdot(A\nabla p_j)||p_j|dxd\xi
	\leq 2\Lambda_1\int_{D_j}|\nabla \eta_j||\eta_j||\nabla p_j||p_j|dxd\xi\nonumber\\
	&\leq&\epsilon_2\Lambda_1\int_{D_j}|\nabla p_j|^2\eta_j^2dxd\xi
	+\frac{\Lambda_1}{\epsilon_2}\int_{D_j}|p_j|^2|\nabla\eta_j|^2dxd\xi\nonumber\\
	&\leq&\epsilon_2\Lambda_1\int_{D_j}|\nabla p_j|^2\eta_j^2dxd\xi
	+\frac{\Lambda_1\pi^2m^2}{R^2\epsilon_2}\int_{D_j}|p_j|^2dxd\xi
\end{eqnarray*}
and
\begin{equation}\label{yu-6-21-2}
	I_2\leq 2\Lambda^2_1\epsilon_2^2\left(\int_{D_j}|\nabla p_j|^2\eta_j^2dxd\xi\right)^2
	+\frac{2\Lambda_1^2\pi^4m^4}{R^4\epsilon^2_2}\left(\int_{D_j}|p_j|^2dxd\xi\right)^2, \;\;\forall \epsilon_2>0.
\end{equation}
Further, 
\begin{eqnarray*}\label{yu-6-21-3}
	\int_{D_j}l|p_{j+1}|p_j|\partial_{\xi}\eta^2_jdxd\xi
	&\leq&\epsilon_3\int_{D_j}l|p_{j+1}|^2\eta_j^2dxd\xi
	+\frac{1}{\epsilon_3}\int_{D_j}l|p_j|^2|\partial_\xi\eta_j|^2dxd\xi\nonumber\\
	&\leq&\epsilon_3\int_{D_j}|p_{j+1}|^2\eta_j^2dxd\xi
	+\frac{m^2\pi^2\Lambda_3}{R^2\epsilon_3}\int_{D_j}|p_j|^2dxd\xi
\end{eqnarray*}	
and
\begin{equation}\label{yu-6-21-4}
	I_3\leq 2\epsilon_3^2\left(\int_{D_j}l|p_{j+1}|^2\eta_j^2dxd\xi\right)^2
	+\frac{2\pi^4\Lambda_3^2m^4}{R^4\epsilon_3^2}\left(\int_{D_j}|p_j|^2dxd\xi\right)^2, \;\;\forall \epsilon_3>0.
\end{equation}
Taking $\epsilon_2=\frac{\sqrt{2}}{4\Lambda_1^2}$, $\epsilon_3=\frac{1}{4}$
 in  (\ref{yu-6-21-2})
		and (\ref{yu-6-21-4}), respectively, by (\ref{yu-10-15-1}),
we derive that 
\begin{eqnarray}\label{yu-6-21-8}
	\sum_{i=1}^3I_i&\leq&\frac{\Lambda_1^{-2}}{8}\left(\int_{D_j}|\nabla p_j|^2\eta_j^2dxd\xi
	\right)^2+\frac{1}{8}\left(\int_{D_j}l|p_{j+1}|^2\eta_j^2dxd\xi\right)^2\nonumber\\
	&\;&+\frac{M_1
		+M_2m^4}{R^4}
	\left(\int_{D_j}|p_j|^2\eta_j^2dxd\xi\right)^2
\end{eqnarray}	
with two positive constants $M_1$ and $M_2$. 
	On the other hand, by the uniform ellipticity condition (\ref{yu-11-28-2}),  we find that
\begin{equation*}\label{yu-6-22-2}
	\left(\int_{D_j}\nabla \bar{p}_j\cdot(A\nabla p_j)\eta_j^2dxd\xi\right)^2
	\geq \Lambda_1^{-2}\left(\int_{D_j}|\nabla p_j|^2\eta_j^2dxd\xi\right)^2.
\end{equation*}
	This, together with (\ref{yu-7-29-3}), (\ref{yu-6-19-9}) and (\ref{yu-6-21-8}), gives that for each $j\in\{0,1,\cdots,m-1\}$, 
\begin{eqnarray*}\label{yu-6-22-3}
	\int_{D_{j+1}}|p_{j+1}|^2\eta^2_jdxd\xi&\leq& \frac{2\Lambda_3\sqrt{2(M_1+M_2m^4)}}{R^2}
	\int_{D_j}|p_j|^2\eta^2_jdxd\xi\nonumber\\
	&\leq&\frac{\Pi(1+m^2)}{R^2}\int_{D_j}|p_j|^2\eta_j^2dxd\xi,
\end{eqnarray*}
	where $\Pi=2\Lambda_3\sqrt{2(M_1+M_2)}$. 
	 Here, we used the definition of $D_j$. Iterating (\ref{yu-6-22-3}) for each 
	$j\in\{0,1,\cdots,m-1\}$, by the fact that $p_0=p=\hat{v}$ we obtain 	
\begin{equation}\label{yu-6-22-4}
	\int_{\triangle_{\frac{R}{2}}(x_0)\times(-\frac{R}{2},\frac{R}{2})}|p_m|^2dxd\xi
	\leq2R\left[\frac{\Pi(1+m^2)}{R^2}\right]^m\int_{\triangle_R(x_0)}|\hat{v}(x,\mu)|^2dx.
\end{equation}
By Lemma \ref{yu-lemma-6-10-1}, we get that  for each $\mu\in\mathbb{R}$,
\begin{eqnarray}\label{yu-6-22-5}
	\|\hat{v}(\cdot,\mu)\|_{L^2(\triangle_{R}(x_0))}
	&\leq&CT^{-\frac{1}{2}}e^{CR^{1-N}\left(1+\frac{1}{T-t_0}\right)T}
	F(R).
\end{eqnarray}
	Therefore, by (\ref{yu-6-22-4}) and (\ref{yu-6-22-5}), we get that for each $m\in\mathbb{N}^+$, 
\begin{equation}\label{yu-6-22-6}
	\int_{\triangle_{\frac{R}{2}}(x_0)\times(-\frac{R}{2},\frac{R}{2})}|p_m|^2dxd\xi
	\leq CT^{-1}\left[\frac{\Pi(1+m^2)}{R^2}\right]^mRe^{CR^{1-N}\left(1+\frac{1}{T-t_0}\right)T}
	F^2(R).
\end{equation}
\par
	For any $\varphi\in L^2(\triangle_{\frac{R}{2}}(x_0);\mathbb{C})$, we define 
\begin{equation*}\label{yu-6-22-7}
	P_{\mu}(\xi):=\int_{\triangle_{\frac{R}{2}}(x_0)}p(x,\xi,\mu)\bar{\varphi}(x)dx, \;\;\;\xi\in\left(-\frac{R}{2},\frac{R}{2}\right).
\end{equation*}
	It is well known that the following interpolation inequality holds (See a proof in Appendix)
\begin{equation}\label{yu-6-22-8}
	\|f\|_{L^\infty(I)}\leq C\left(|I|\|f'\|_{L^2(I)}^2+\frac{1}{|I|}\|f\|_{L^2(I)}^2\right)^{\frac{1}{2}}
	\;\;\mbox{for each}\;\;f\in H^1(I),
\end{equation}
	where $I$ is an bounded nonempty interval of $\mathbb{R}$ and $|I|$ is the length. Therefore, by 
	(\ref{yu-6-22-6}) we have that for any $\xi\in(-\frac{R}{2},\frac{R}{2})$ and $m\in\mathbb{N}^+$,
\begin{eqnarray}\label{yu-6-22-9}
	|P_{\mu}^{(m)}(\xi)|&\leq& C\left(R\int_{-\frac{R}{2}}^{\frac{R}{2}}|P_{\mu}^{(m+1)}(\xi)|^2d\xi
	+\frac{1}{R}\int_{-\frac{R}{2}}^{\frac{R}{2}}|P_{\mu}^{(m)}(\xi)|^2d\xi\right)^{\frac{1}{2}}\nonumber\\
	&\leq&C\left(R\int_{\triangle_{\frac{R}{2}}(x_0)\times(-\frac{R}{2},\frac{R}{2})}
	|p_{m+1}|^2dxd\xi+\frac{1}{R}\int_{\triangle_{\frac{R}{2}}(x_0)\times(-\frac{R}{2},\frac{R}{2})}|p_m|^2dxd\xi\right)
	^{\frac{1}{2}}\|\varphi\|_{L^2(\triangle_{\frac{R}{2}}(x_0))}\nonumber\\
	&\leq&CT^{-\frac{1}{2}}e^{CR^{1-N}\left(1+\frac{1}{T-t_0}\right)T}F(R)\frac{[2\Pi(m+1)]^{m+1}}{R^m}
	\|\varphi\|_{L^2(\triangle_{\frac{R}{2}}(x_0))}.
\end{eqnarray}
	This implies that  $P_{\mu}(\cdot)$ can be 
	analytically extended to the complex plane (still denoted by the same notation)
\begin{equation*}\label{yu-6-22-10}
	E_0:=\left\{\xi\in\mathbb{C}:\mbox{Re}\,\xi\in\left(-\frac{R}{2},\frac{R}{2}\right)\;\;\mbox{and}\;\;\mbox{Im}\,\xi\in(-L_0,L_0)\right\},
\end{equation*}
	where $L_0:=\frac{R}{2e\Pi}$. 
Then, 
\begin{equation*}\label{yu-6-22-11}
	|P_{\mu}(\xi)|\leq \sum_{m=0}^\infty\frac{|P^{(m)}(0)|}{m!}|\xi|^m,
\end{equation*}
when $\xi\in i\mathbb{R}\cap E_0$.
Taking $\xi_0=-\frac{iR}{4e\Pi}$, by (\ref{yu-6-22-9}), we  get that 
\begin{equation}\label{yu-6-22-12}
	|P_{\mu}(\xi_0)|\leq C\Pi\sum_{m=0}^\infty\frac{(m+1)^{m+1}}{m!(2e)^m}
	T^{-\frac{1}{2}}
	e^{CR^{1-N}\left(1+\frac{1}{T-t_0}\right)T}F(R)
	\|\varphi\|_{L^2(\triangle_{\frac{R}{2}}(x_0))}.
\end{equation}
While, by the definition,
\begin{eqnarray*}\label{yu-6-22-13}
	P_{\mu}(\xi_0)=e^{\frac{\sqrt{|\mu|}R}{4e\Pi}}\int_{\triangle_{\frac{R}{2}}(x_0)}\hat{v}(x,\mu)\bar{\varphi}(x)dx.
\end{eqnarray*}
	This, together with (\ref{yu-6-22-12}), means that, 
	\begin{equation}\label{yu-6-22-14}
	\|\hat{v}(\cdot,\mu)\|_{L^2(\triangle_{\frac{R}{2}}(x_0))}\leq CT^{-\frac{1}{2}}
	e^{CR^{1-N}\left(1+\frac{1}{T-t_0}\right)T-\frac{\sqrt{|\mu|}R}{4e\Pi}}F(R).
\end{equation}
		By (\ref{yu-6-22-14}) and (\ref{yu-6-22-15}), we derive (\ref{yu-6-22-16}) and complete the proof.	
\end{proof}

\subsubsection{Stability estimate and three-ball inequality for elliptic equations}\label{yu-section-7-26-3}
Suppose $T>0$, $L>0$ and $\triangle_R(x_0)\subset\Omega$ with $x_0\in\Omega$.
Let $g\in H^1(\triangle_R(x_0)\times(-L,L))$ be a solution of the following elliptic equation	
\begin{equation}\label{yu-6-23-9}
\begin{cases}
	\mbox{div}(A(x)\nabla g)+l(x) g_{x_{N+1}x_{N+1}}-b(x)g=0&\;\;\;\text{in}\;\;\; \triangle_R(x_0)\times(-L,L),\\
	g(x,0)=f_1(x)&\;\;\;\text{in}\;\;\; \triangle_R(x_0),\\
	g_{x_{N+1}}(x,0)=f_2(x)&\;\;\;\text{in}\;\;\; \triangle_R(x_0),
\end{cases}
\end{equation}
	where $f_1\in H^1(\triangle_R(x_0))$, $f_2\in L^2(\triangle_R(x_0))$, 
	$A$, $b$ and $l$ satisfy the same assumptions as before.

\begin{lemma}[Stability estimate]\label{yu-proposition-7-1-1}
There is $\gamma\in(0,1)$ such that for any $r\in(0,\min\{R,L\}/3)$, 
{\begin{equation}\label{yu-7-3-b-2}
	\|g\|^2_{H^1(B_r(x_0,0))}\leq Cr^{-4}\|g\|_{H^1(B_{2r}(x_0,0))}^{2\gamma}\left(\|f_1\|^2_{H^1(\triangle_{2r}(x_0))}+\|f_2\|^2_{L^2(\triangle_{2r}(x_0))}\right)^{1-\gamma}.
\end{equation}}
\end{lemma}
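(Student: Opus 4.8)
The plan is to derive \eqref{yu-7-3-b-2} from a Carleman estimate for the $(N+1)$-dimensional operator $\mathcal{P}:=\operatorname{div}(A(x)\nabla\,\cdot\,)+l(x)\partial_{x_{N+1}}^{2}$, treating the zero-order term $-b(x)g$ in \eqref{yu-6-23-9} as a perturbation to be absorbed by the weight. By \eqref{yu-11-28-2} and \eqref{yu-7-29-3}, $\mathcal{P}$ is uniformly elliptic on $\mathbb{R}^{N+1}$ with Lipschitz principal part independent of $x_{N+1}$. After translating so that $(x_{0},0)$ is the origin and rescaling to the unit ball — under which the principal part keeps the same ellipticity while the potential is multiplied by $r^{2}$ and hence becomes uniformly small for $r\in(0,1)$ (using $b\in L^{N+\delta}$ in case (i) of \eqref{yu-6-24-1-1-b}, and the $|x|^{-1}$ bound in case (ii)) — I would fix a weight $\phi$ that is radially \emph{decreasing} about a point $p=(x_{0},-d)$ placed slightly below the hyperplane $\{x_{N+1}=0\}$, of the form $\phi(y)=e^{-\lambda|y-p|^{2}}$ with $\lambda$ large enough that $\phi$ is strongly pseudoconvex for $\mathcal{P}$; placing $p$ strictly below the hyperplane guarantees $\nabla\phi\neq0$ on $\{x_{N+1}\ge0\}$. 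For such a $\phi$ one has a boundary Carleman inequality
\[
\tau\!\int_{x_{N+1}>0}\!e^{2\tau\phi}|\nabla v|^{2}+\tau^{3}\!\int_{x_{N+1}>0}\!e^{2\tau\phi}|v|^{2}\;\le\;C\!\int_{x_{N+1}>0}\!e^{2\tau\phi}|\mathcal{P}v|^{2}+\mathcal{R}_{\tau}(v)
\]
for all large $\tau$ and all $v\in H^{2}$ supported in a fixed small ball about the origin, where $\mathcal{R}_{\tau}(v)$ collects the contributions on $\{x_{N+1}=0\}$; these involve only $v$, $\nabla_{x}v$ and $\partial_{x_{N+1}}v$ on $\{x_{N+1}=0\}$, each carrying a factor $e^{2\tau\phi}$ and at most a polynomial factor in $\tau$.

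Second, I would localize: for $r\in(0,\min\{R,L\}/3)$ pick $d$ small and radii $r<\rho_{1}<\rho_{2}<\rho_{3}<2r$, and a cut-off $\chi\in C_{c}^{\infty}(\mathbb{R}^{N+1})$ with $\chi\equiv1$ on $\{|y-p|\le\rho_{2}\}$, $\chi\equiv0$ outside $\{|y-p|<\rho_{3}\}$ and $|\nabla^{k}\chi|\le Cr^{-k}$, chosen so that $B_{r}(x_{0},0)\subseteq\{|y-p|<\rho_{1}\}$ and $\{|y-p|<\rho_{3}\}\cap\{x_{N+1}\ge0\}\subseteq B_{2r}(x_{0},0)$; since $2r<\min\{R,L\}$, everything stays compactly inside $\triangle_{R}(x_{0})\times(-L,L)$. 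Apply the Carleman estimate to $v=\chi g$ on $\{x_{N+1}>0\}$ and, symmetrically, on $\{x_{N+1}<0\}$ with $p$ reflected above the hyperplane. On the flat boundary $\{x_{N+1}=0\}\cap\operatorname{supp}\chi\subseteq\triangle_{2r}(x_{0})\times\{0\}$ one has $v(\cdot,0)=\chi(\cdot,0)f_{1}$ and $\partial_{x_{N+1}}v(\cdot,0)=\chi(\cdot,0)f_{2}+\partial_{x_{N+1}}\chi(\cdot,0)f_{1}$, so $\mathcal{R}_{\tau}(v)\le C\tau^{3}e^{2\tau\phi(d)}\big(\|f_{1}\|_{H^{1}(\triangle_{2r}(x_{0}))}^{2}+\|f_{2}\|_{L^{2}(\triangle_{2r}(x_{0}))}^{2}\big)$, since $\phi(d)=\max_{\{x_{N+1}=0\}\cap\operatorname{supp}\chi}\phi$. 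On the right side, $\mathcal{P}v=\chi\,bg+[\mathcal{P},\chi]g$ by \eqref{yu-6-23-9}; the commutator is first order in $g$, supported in $\{\rho_{2}\le|y-p|\le\rho_{3}\}$ where $\phi\le\phi(\rho_{2})$, hence $\int e^{2\tau\phi}|[\mathcal{P},\chi]g|^{2}\le Cr^{-4}e^{2\tau\phi(\rho_{2})}\|g\|_{H^{1}(B_{2r}(x_{0},0))}^{2}$ (the $r^{-4}$ from $|\nabla^{2}\chi|^{2}$). Elliptic regularity (bootstrapping from $g\in H^{1}$ using the integrability of $b$) ensures $g\in H^{2}_{\mathrm{loc}}$, so $v$ is admissible in the estimate.

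Third, the potential term $\int e^{2\tau\phi}|\chi bg|^{2}=\int|b|^{2}|w|^{2}$ with $w:=e^{\tau\phi}\chi g$ is absorbed. Since $|\nabla\phi|$ is bounded on $\operatorname{supp}\chi$, one has $\|w\|_{H^{1}}^{2}\le C\big(\int e^{2\tau\phi}|\nabla(\chi g)|^{2}+\tau^{2}\int e^{2\tau\phi}|\chi g|^{2}\big)$. In case (i), Corollary \ref{yu-corollary-1} applied in the $x$-variables (with $\eta=\delta/2$, so $|b|^{2}\in L^{N/2+\delta/2}$) and Young's inequality give $\int|b|^{2}|w|^{2}\le\varepsilon\|w\|_{H^{1}}^{2}+C_{\varepsilon}\|w\|_{L^{2}}^{2}$; in case (ii) the same follows from Hardy's inequality, Lemma \ref{hardy}(i), since $\chi g(\cdot,x_{N+1})\in H_{0}^{1}(\mathbb{R}^{N})$ for each $x_{N+1}$. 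For $\tau$ large, $\varepsilon\|w\|_{H^{1}}^{2}$ is absorbed by $\tau\int e^{2\tau\phi}|\nabla(\chi g)|^{2}$ and $C_{\varepsilon}\|w\|_{L^{2}}^{2}$ by $\tau^{3}\int e^{2\tau\phi}|\chi g|^{2}$. Combining the three steps, restricting the left side to $\{|y-p|<\rho_{1}\}$ and adding the lower-half estimate gives, for all large $\tau$,
\[
e^{2\tau\phi(\rho_{1})}\|g\|_{H^{1}(B_{r}(x_{0},0))}^{2}\le Cr^{-4}e^{2\tau\phi(\rho_{2})}\|g\|_{H^{1}(B_{2r}(x_{0},0))}^{2}+C\tau^{3}e^{2\tau\phi(d)}\Big(\|f_{1}\|_{H^{1}(\triangle_{2r}(x_{0}))}^{2}+\|f_{2}\|_{L^{2}(\triangle_{2r}(x_{0}))}^{2}\Big),
\]
with the \emph{strict} separation $\phi(\rho_{2})<\phi(\rho_{1})<\phi(d)$, because $\phi$ decreases in $|y-p|$ and $d<\rho_{1}<\rho_{2}$.

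Finally, dividing by $e^{2\tau\phi(\rho_{1})}$ and minimizing the resulting two-term bound over $\tau$ yields \eqref{yu-7-3-b-2} with $\gamma=\big(\phi(d)-\phi(\rho_{1})\big)/\big(\phi(d)-\phi(\rho_{2})\big)\in(0,1)$ — a standard two-term optimization; the logarithmic factor coming from the optimal $\tau$ is absorbed by slightly decreasing $\gamma$ (using $(\log s)^{k}\le C_{\mu}s^{\mu}$), and when the Cauchy data is not small relative to $r^{-4}\|g\|_{H^{1}(B_{2r}(x_{0},0))}^{2}$ the inequality is trivial. The rescaling of the first step makes $\gamma$ independent of $r$. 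I expect three points to require the most care: (a) absorbing the \emph{unbounded} potential into the Carleman inequality — in case (i) this rests on the borderline Sobolev estimate of Corollary \ref{yu-corollary-1} and in case (ii) on Hardy's inequality, both at the scale supplied by the rescaling; (b) arranging that the \emph{Cauchy data}, rather than an interior trace of $g$, shows up on the right, which is precisely what forces the one-sided application, the placement of $p$ just below (resp.\ above) the hyperplane, and the three-level separation $\phi(\rho_{2})<\phi(\rho_{1})<\phi(d)$; and (c) the bookkeeping of the boundary term $\mathcal{R}_{\tau}(v)$ and of the powers of $r$ and $\tau$ through the optimization.
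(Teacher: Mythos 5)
Your proposal is correct and follows essentially the same route as the paper: the paper also proves this stability estimate by a localized one\,-sided Carleman inequality on the upper half ball (its own pointwise identity, Lemma \ref{yu-lemma-7-1-1}, with weight $e^{se^{\lambda\varphi}}$), with the cut\,-off commutator living in an outer shell, the boundary terms on $\{x_{N+1}=0\}$ carrying the Cauchy data $(f_1,f_2)$, the unbounded potential absorbed via Hardy's inequality or the Sobolev bound \eqref{yu-9-26-2}, and a three\,-level weight comparison followed by optimization in the large parameter to produce the H\"older exponent. The only differences are cosmetic (your shifted\,-center weight $e^{\tau e^{-\lambda|y-p|^2}}$ and a quoted standard boundary Carleman estimate versus the paper's explicitly constructed $\varphi$ and self\,-contained Carleman identity, plus your two\,-sided application in place of the paper's evenness remark), so no substantive gap.
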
	

\medskip

The proof of Lemma \ref{yu-proposition-7-1-1} is based on a point-wise estimate (see Lemma \ref{yu-lemma-7-1-1} below).
Here and in the sequel, for simplicity we denote
\begin{equation*}\label{yu-7-1-6}
	\bar{A}(x,x_{N+1})=\left[\bar{a}^{ij}(x,x_{N+1})\right]_{(N+1)\times (N+1)}:=\mbox{diag}(A(x),l(x)),
\end{equation*}
$$
\nabla=(\nabla_x,\partial_{x_{N+1}}),\quad \mbox{div}=\mbox{div}_x+\partial_{x_{N+1}}
$$
when they do not arise any confusion in the context.	
	
\begin{lemma}\label{yu-lemma-7-1-1}
Let $s>0$, $\lambda>0$, $\varphi\in C^2(\overline{B_R}(x_0,0))$ and set $\alpha=e^{\lambda\varphi}$, $\theta=e^{s\alpha}$.
If $V\in C^2(\triangle_R(x_0)\times(-L,L))$ and $W=\theta V$, then the following inequality holds:
\begin{eqnarray*}\label{yu-7-2-1}
	&\;&\theta^2|\mbox{div}(\bar{A}\nabla V)|^2+\mathcal{D}\nonumber\\
	&\geq&\mathcal{B}_1|W|^2+\mathcal{B}_2\nabla W\cdot(\bar{A}\nabla W)
	+2s\lambda^2W\nabla[\alpha\nabla\varphi\cdot(\bar{A}\nabla\varphi)]\cdot(\bar{A}\nabla W)+2s\lambda^2\alpha|\nabla W\cdot (\bar{A}\nabla\varphi)|^2\nonumber\\
	&\;&+2s\lambda\alpha(\bar{A}\nabla W)\cdot[D^2\varphi(\bar{A}\nabla W)]+2s\lambda\alpha\left(\sum_{i,j=1}^{N+1}\partial_iW\nabla \bar{a}^{ij}\partial_j\varphi\right)\cdot(\bar{A}\nabla W)\nonumber\\
	&\;&-s\lambda\alpha\left(\sum_{i,j=1}^{N+1}\partial_i W\nabla \bar{a}^{ij}\partial_j W\right)\cdot(\bar{A}\nabla\varphi),
\end{eqnarray*}
	where
\begin{equation*}
\begin{cases}
	\mathcal{B}_1=s^3\lambda^4\alpha^3|\nabla\varphi\cdot(\bar{A}\nabla\varphi)|^2+s^3\lambda^3\alpha^3\mbox{div}\{\bar{A}\nabla\varphi[\nabla\varphi\cdot(\bar{A}\nabla\varphi)]\}\\
	\;\;\;\;\;\;\;\;\;-2s^2\lambda^2\alpha^2|\mbox{div}(\bar{A}\nabla\varphi)|^2-2s^2\lambda^4\alpha^2|\nabla\varphi\cdot(\bar{A}\nabla\varphi)|^2\\
	\;\;\;\;\;=s^3\lambda^4\alpha^3|\nabla\varphi\cdot(\bar{A}\nabla\varphi)|^2+{s^3\alpha^3O(\lambda^3)}
	+s^2\alpha^2O(\lambda^4),\\
	\mathcal{B}_2=s\lambda^2\alpha\nabla\varphi\cdot(\bar{A}\nabla\varphi)-s\lambda\alpha\mbox{div}(\bar{A}\nabla\varphi)\\
	\;\;\;\;\;=s\lambda^2\alpha\nabla\varphi\cdot(\bar{A}\nabla\varphi)+s\alpha O(\lambda),\\
	\mathcal{D}=2s\lambda^2\mbox{div}[\alpha W\bar{A}\nabla W\nabla\varphi\cdot(\bar{A}\nabla \varphi)]
	+2s\lambda\mbox{div}[\alpha \bar{A}\nabla W\nabla W\cdot(\bar{A}\nabla \varphi)]\\
	\;\;\;\;\;\;\;\;\;-s\lambda\mbox{div}[\alpha\nabla W\cdot(\bar{A}\nabla W)\bar{A}\nabla\varphi]
	+s^3\lambda^3\mbox{div}[\alpha^3|W|^2\bar{A}\nabla\varphi\nabla\varphi\cdot(\bar{A}\nabla\varphi)].
\end{cases}
\end{equation*}
\end{lemma}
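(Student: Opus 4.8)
The plan is to carry out the classical conjugation-and-commutator computation behind Carleman estimates, performed entirely at the pointwise level, so that what would ordinarily be integrations by parts become product-rule identities and the exact derivatives they produce are collected into the field $\mathcal D$. The first step is to substitute $V=\theta^{-1}W$ into $\mbox{div}(\bar A\nabla V)$. Using $\nabla\theta^{-1}=-s\lambda\alpha\theta^{-1}\nabla\varphi$ together with $\mbox{div}(\alpha W\bar A\nabla\varphi)=\alpha\nabla W\cdot\bar A\nabla\varphi+\lambda\alpha W\,\nabla\varphi\cdot(\bar A\nabla\varphi)+\alpha W\,\mbox{div}(\bar A\nabla\varphi)$, a short computation gives
\[
\theta\,\mbox{div}(\bar A\nabla V)=\mbox{div}(\bar A\nabla W)-2s\lambda\alpha\,\nabla\varphi\cdot(\bar A\nabla W)+s^2\lambda^2\alpha^2\big(\nabla\varphi\cdot\bar A\nabla\varphi\big)W-s\lambda\,\mbox{div}(\alpha\bar A\nabla\varphi)\,W=:P_1W+P_2W,
\]
where $P_1W:=\mbox{div}(\bar A\nabla W)+s^2\lambda^2\alpha^2\big(\nabla\varphi\cdot\bar A\nabla\varphi\big)W$ is formally self-adjoint (here the symmetry of $\bar A$ is used) and $P_2W:=-2s\lambda\alpha\,\nabla\varphi\cdot(\bar A\nabla W)-s\lambda\,\mbox{div}(\alpha\bar A\nabla\varphi)W$ is formally skew-adjoint; a direct check confirms the skew-adjointness of $P_2$, and this is in fact exactly what fixes the coefficient of its zeroth-order part. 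Since $\theta^2|\mbox{div}(\bar A\nabla V)|^2=|P_1W|^2+|P_2W|^2+2\,P_1W\,P_2W\ge 2\,P_1W\,P_2W$, it then suffices to show that $2P_1W\,P_2W$ equals the right-hand side of the asserted inequality minus $\mathcal D$.

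To expand $2P_1W\,P_2W$, I would split it into the four products of the (second/zeroth)-order part of $P_1W$ with the (first/zeroth)-order part of $P_2W$ and treat them separately. The product of $\mbox{div}(\bar A\nabla W)$ with the first-order part of $P_2W$, written $\beta\cdot\nabla W$ with $\beta=-2s\lambda\alpha\bar A\nabla\varphi$, is handled by the anisotropic Rellich--Pohozaev identity $2\,\mbox{div}(\bar A\nabla W)(\beta\cdot\nabla W)=\mbox{div}(\cdots)+(\mbox{div}\,\beta)\,\nabla W\cdot\bar A\nabla W-2\sum_{i,k}(\bar A\nabla W)_i(\partial_i\beta^k)\partial_kW+\sum_{i,j,k}\beta^k(\partial_k\bar a^{ij})\partial_iW\,\partial_jW$; expanding $\partial_i\beta^k$ and separating its Hessian part produces the term with $(\bar A\nabla W)\cdot[D^2\varphi(\bar A\nabla W)]$, the two coefficient-derivative terms involving $\nabla\bar a^{ij}$, the term with $\alpha|\nabla W\cdot(\bar A\nabla\varphi)|^2$, a share of $\mathcal B_2$ (coming from $\mbox{div}\,\beta$), and exact derivatives for $\mathcal D$. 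The product of $\mbox{div}(\bar A\nabla W)$ with the zeroth-order part $d_0W$ of $P_2W$, $d_0=-s\lambda\,\mbox{div}(\alpha\bar A\nabla\varphi)$, is integrated by parts once and yields the remaining share of $\mathcal B_2$, the term $2s\lambda^2W\,\nabla[\alpha\nabla\varphi\cdot(\bar A\nabla\varphi)]\cdot(\bar A\nabla W)$, and a divergence. The product of the zeroth-order part $c_0W$ of $P_1W$, $c_0=s^2\lambda^2\alpha^2(\nabla\varphi\cdot\bar A\nabla\varphi)$, with $\beta\cdot\nabla W$ has the form $c_0\beta\cdot\nabla(W^2)=\mbox{div}(c_0\beta W^2)-W^2\,\mbox{div}(c_0\beta)$, and since $\nabla\alpha^3=3\lambda\alpha^3\nabla\varphi$ the quantity $-W^2\mbox{div}(c_0\beta)$ contributes the leading term $s^3\lambda^4\alpha^3|\nabla\varphi\cdot(\bar A\nabla\varphi)|^2$ of $\mathcal B_1$ and the $s^3\alpha^3O(\lambda^3)$ part; finally the purely zeroth-order product $2c_0d_0W^2$ adds to $\mathcal B_1$ the remaining terms of order $s^3\alpha^3\lambda^3$ and $s^2\alpha^2\lambda^4$. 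Gathering every exact derivative into $\mathcal D$, every coefficient of $|W|^2$ into $\mathcal B_1$, and every coefficient of $\nabla W\cdot\bar A\nabla W$ into $\mathcal B_2$ then yields precisely the stated identity for $2P_1W\,P_2W$, and hence the inequality; the asymptotic forms of $\mathcal B_1$ and $\mathcal B_2$ are obtained by reading off the highest powers of $\lambda$.

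The only genuine difficulty is bookkeeping. Because $\bar A$ is only variable, not constant, each product-rule manipulation throws off additional terms carrying $\nabla\bar a^{ij}$, and these must be carried through and matched against the two $\nabla\bar a^{ij}$-terms in the statement rather than absorbed elsewhere. A related point requiring care is that the estimate is asserted pointwise, so at no stage may one integrate and discard boundary contributions: every step must be an exact identity, with what would be boundary terms folded into $\mathcal D$. Once $2P_1W\,P_2W$ has been assembled term by term in this way, the asserted inequality follows at once from $|P_1W|^2+|P_2W|^2\ge 0$.
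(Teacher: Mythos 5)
Your overall strategy (conjugate $W=\theta V$, split the conjugated operator, and extract positivity from a cross term while collecting exact derivatives) is the right family of argument, and your computation of $\theta\,\mbox{div}(\bar A\nabla V)$ agrees with the paper's. But the key claim in your second paragraph --- that the exact symmetric/skew splitting satisfies $2P_1W\,P_2W=\mathrm{RHS}-\mathcal D$ with the $\mathcal B_1,\mathcal B_2,\mathcal D$ of the statement --- is false, and the lemma does not follow from it. Concretely, if you expand $2P_1W\,P_2W$ with $P_2W=-2s\lambda\alpha\nabla W\cdot(\bar A\nabla\varphi)-s\lambda\,\mbox{div}(\alpha\bar A\nabla\varphi)W$, the contributions to the coefficient of $\nabla W\cdot(\bar A\nabla W)$ cancel identically: the Rellich-type identity applied to $-4s\lambda\alpha\,\mbox{div}(\bar A\nabla W)\,\nabla W\cdot(\bar A\nabla\varphi)$ produces $-2s\lambda^2\alpha\,\nabla\varphi\cdot(\bar A\nabla\varphi)-2s\lambda\alpha\,\mbox{div}(\bar A\nabla\varphi)$, while the zeroth-order parts of $P_2$ give back exactly $+2s\lambda^2\alpha\,\nabla\varphi\cdot(\bar A\nabla\varphi)+2s\lambda\alpha\,\mbox{div}(\bar A\nabla\varphi)$. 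So your identity contains no $\mathcal B_2\,\nabla W\cdot(\bar A\nabla W)$ term at all, whereas the statement requires the positive term $s\lambda^2\alpha\,\nabla\varphi\cdot(\bar A\nabla\varphi)\,\nabla W\cdot(\bar A\nabla W)$. Likewise your cross term carries $4s\lambda^2\alpha|\nabla W\cdot(\bar A\nabla\varphi)|^2$ and $4s\lambda\alpha(\bar A\nabla W)\cdot[D^2\varphi(\bar A\nabla W)]$ (double the stated coefficients), its leading $|W|^2$ coefficient is $4s^3\lambda^4\alpha^3|\nabla\varphi\cdot(\bar A\nabla\varphi)|^2$ rather than $s^3\lambda^4\alpha^3|\nabla\varphi\cdot(\bar A\nabla\varphi)|^2$, its exact-derivative field is not the stated $\mathcal D$ (e.g.\ it contains $-2s\lambda\,\mbox{div}[\alpha\,\mbox{div}(\bar A\nabla\varphi)W\bar A\nabla W]$ and doubled versions of the other fluxes), and it can never generate the negative terms $-2s^2\lambda^2\alpha^2|\mbox{div}(\bar A\nabla\varphi)|^2-2s^2\lambda^4\alpha^2|\nabla\varphi\cdot(\bar A\nabla\varphi)|^2$ that sit inside $\mathcal B_1$. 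There is also a regularity obstruction: handling $-2s\lambda\alpha\,\mbox{div}(\bar A\nabla\varphi)W\,\mbox{div}(\bar A\nabla W)$ by your product-rule scheme forces a factor $\nabla\,\mbox{div}(\bar A\nabla\varphi)$, i.e.\ second derivatives of the coefficients, which are only Lipschitz; the paper's identity never differentiates $\nabla\bar a^{ij}$ again.

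The paper's proof avoids all of this by deliberately not using the exact symmetric/skew decomposition. It sets $I_1=-\mbox{div}(\bar A\nabla W)-s^2\lambda^2\alpha^2W\nabla\varphi\cdot(\bar A\nabla\varphi)$ and $I_2=2s\lambda\alpha\nabla W\cdot(\bar A\nabla\varphi)+2s\lambda^2\alpha W\nabla\varphi\cdot(\bar A\nabla\varphi)$ (no $\mbox{div}(\bar A\nabla\varphi)$ term, and a doubled $\lambda$-term), so that $I_3:=I_1+I_2$ equals $-\theta\,\mbox{div}(\bar A\nabla V)$ plus two zeroth-order corrections; it then uses the elementary inequality $I_1I_2\le\frac12|I_3|^2$ and a Cauchy--Schwarz bound on $|I_3|^2$, which is precisely the source of the two negative $s^2$-order terms in $\mathcal B_1$, while the exact expansion $I_1I_2=J_1+J_2+J_3$ (single, not doubled, products) produces the stated coefficients and the stated $\mathcal D$. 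To repair your argument you would have to abandon the pure cross-term identity and adopt this asymmetric grouping (or otherwise retain part of $|P_1W|^2+|P_2W|^2$ and re-derive different $\mathcal B_1,\mathcal B_2,\mathcal D$, which would prove a different lemma than the one stated).
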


	\smallskip
	
\begin{proof}[\textbf{Proof of Lemma \ref{yu-proposition-7-1-1}}] 
	
With the same notation as above,  (\ref{yu-6-23-9}) can be rewritten as
\begin{equation*}\label{yu-7-1-7}
	\mbox{div}(\bar{A}\nabla g)-bg=0\;\;\mbox{in}\;\;\triangle_R(x_0)\times(-L,L),
\end{equation*}
where $\bar A$ satisfies 
\begin{equation*}\label{yu-7-1-12}
	\Lambda_4^{-1}|\xi|^2\leq \bar{A}(x,x_{N+1})\xi\cdot\xi\leq \Lambda_4|\xi|^2\;\;
	\mbox{for each}\;\;(x,\xi)\in(\triangle_{R}(x_0)\times(-L,L))\times\mathbb{R}^{N+1},
\end{equation*}
with $\Lambda_4=\max\{\Lambda_1,\Lambda_3\}$.

We next divide the proof into two steps as follows.

\vskip 5pt
         \textbf{Step 1.} For each $r<\min\{R,L\}$, let us  set 
         $$r_1=r,\quad r_2=\frac{3r}{2},\quad r_3=2r,\quad r_4=3r$$
  and        
 \begin{equation*}\label{yu-7-2-10}
 	\omega_1=B^+_{r_1}(x_0,0),\;\;\omega_2=B^+_{r_2}(x_0,0),\;\;\omega_3=B^+_{r_3}(x_0,0),\;\;\omega_4=\triangle_{r_4}(x_0)\times(0,3r).
 \end{equation*}
	Let {$\varphi\in C^2(\mathbb{R};[0,4])$} be such that
\begin{equation}\label{yu-7-2-11}
\begin{cases}
	3<\varphi<4&\mbox{in}\;\;\omega_1,\\
	0<\varphi<1&\mbox{in}\;\;\omega_4\backslash\omega_2,\\
	|\nabla\varphi|>0&\mbox{in}\;\;\overline{\omega_4}.
\end{cases}
\end{equation}
Take a cutoff function $\eta\in C^\infty(\mathbb{R}^{N+1};[0,1])$ to be such that
\begin{equation*}\label{yu-7-2-12}
\begin{cases}
	\eta=1&\mbox{in}\;\;\overline{\omega_2},\\
	\eta=0&\mbox{in}\;\;\overline{\omega_4}\backslash\omega_3,\\
	|\mbox{div}(\bar{A}\nabla\eta)|+|\nabla\eta|^2\leq \frac{C}{r^2}&\mbox{in}\;\;\mathbb{R}^{N+1},
\end{cases}
\end{equation*}
	where $C$ is a generic constant independent of $r$.
		Setting $V=\eta g$,  we have
\begin{equation}\label{yu-7-2-13}
\begin{cases}
	\mbox{div}(\bar{A}\nabla V)-bV=\mbox{div}(\bar{A}\nabla\eta)g+2\nabla\eta\cdot(\bar{A}\nabla g)&\mbox{in}\;\;\omega_4,\\
	|\nabla V|=V=0&\mbox{on}\;\;\partial\omega_4\backslash(\triangle_{r_3}(x_0)\times\{0\}).
\end{cases}
\end{equation}
It follows from Lemma \ref{yu-lemma-7-1-1} that
\begin{eqnarray}\label{yu-7-2-14}
	&&\int_{\omega_4}\theta^2|\mbox{div}(\bar{A}\nabla V)|^2dxdx_{N+1}
	+\int_{\omega_4}\mathcal{D}dxdx_{N+1}\nonumber\\
	&\geq&\int_{\omega_4}\mathcal{B}_1|W|^2dxdx_{N+1}+\int_{\omega_3}\mathcal{B}_2\nabla W\cdot(\bar{A}\nabla W)dxdx_{N+1}\nonumber\\
	&\;&+2s\lambda^2\int_{\omega_4} W\nabla[\alpha\nabla\varphi\cdot(\bar{A}\nabla\varphi)]\cdot (\bar{A}\nabla W)dxdx_{N+1}\nonumber\\
	&\;&+2s\lambda\int_{\omega_4}\alpha(\bar{A}\nabla W)\cdot[D^2\varphi(\bar{A}\nabla W)]dxdx_{N+1}
	+2s\lambda^2\int_{\omega_4}\alpha|\nabla W\cdot(\bar{A}\nabla\varphi)|^2dxdx_{N+1}\nonumber\\
	&\;&+2s\lambda\int_{\omega_4}\alpha\left(\sum_{i,j=1}^{N+1}\partial_iW\nabla \bar{A}^{ij}\partial_j\varphi\right)\cdot(\bar{A}\nabla W)dxdx_{N+1}\nonumber\\
	&\;&-s\lambda\int_{\omega_4}\alpha\left(\sum_{i,j=1}^{N+1}\partial_iW\nabla \bar{A}^{ij}\partial_jW\right)\cdot(\bar{A}\nabla\varphi)dxdx_{N+1}.
\end{eqnarray}
 By the Cauchy-Schwarz inequality, we find
\begin{equation}\label{yu-7-2-15}
	2s\lambda^2\left| W\nabla[\alpha\nabla\varphi\cdot(\bar{A}\nabla\varphi)]\cdot (\bar{A}\nabla W)\right|\leq C\lambda^2(s^2\lambda^2\alpha|W|^2+\alpha|\nabla W|^2),
\end{equation}
\begin{equation}\label{yu-7-2-16}
	2s\lambda\alpha\left|(\bar{A}\nabla W)\cdot[D^2\varphi(\bar{A}\nabla W)]\right|\leq Cs\lambda\alpha|\nabla W|^2,
\end{equation}
\begin{equation}
	2s\lambda\alpha\left|\left(\sum_{i,j=1}^{N+1}\partial_iW\nabla \bar{a}^{ij}\partial_j\varphi\right)\cdot(\bar{A}\nabla W)\right|\leq Cs\lambda\alpha|\nabla W|^2
\end{equation}
	and 
\begin{equation}\label{yu-7-2-17}
	s\lambda\alpha\left|\left(\sum_{i,j=1}^{N+1}\partial_iW\nabla \bar{a}^{ij}\partial_jW\right)\cdot(\bar{A}\nabla\varphi)\right|
	\leq Cs\lambda\alpha|\nabla W|^2.
\end{equation}
	By definitions of $\mathcal{B}_1$ and $\mathcal{B}_2$, we get 
\begin{equation}\label{yu-7-2-18}
	\mathcal{B}_1|W|^2\geq (s^3\lambda^4\alpha^3\Lambda^{-2}_1|\nabla\varphi|^2+s^3\alpha^3O(\lambda^3)+s^2\alpha^2O(\lambda^4))|W|^2
\end{equation}
	and 
\begin{equation}\label{yu-7-2-19}
	\mathcal{B}_2|\nabla W|^2\geq {(s\lambda^2\alpha\Lambda^{-1}_1|\nabla\varphi|^2+s\alpha O(\lambda))|\nabla W|^2}.
\end{equation}
	From (\ref{yu-7-2-14})--(\ref{yu-7-2-19}) and the positivity of $|\nabla\varphi|$, we have 
\begin{eqnarray*}\label{yu-7-2-20}
	&\;&\int_{\omega_4}\theta^2|\mbox{div}(\bar{A}\nabla V)|^2dxdx_{N+1}
	+\int_{\omega_4}\mathcal{D}dxdx_{N+1}\nonumber\\
	&\geq&C\int_{\omega_4}(s^3\lambda^4\alpha^3+s^3\alpha^3O(\lambda^3)+s^2\alpha^2 O(\lambda^4)
	-{Cs^2\lambda^4\alpha})|W|^2dxdx_{N+1}\nonumber\\
	&\;&+C\int_{\omega_4}[s\lambda^2\alpha+s\alpha O(\lambda)-C(\lambda^2+s\lambda)\alpha]|\nabla W|^2dxdx_{N+1}.
\end{eqnarray*}
	Therefore, there is a constant $\lambda_0>1$ such that for any $\lambda\geq \lambda_0$, one can find $s_0>1$ such that 
	for any $s\geq s_0$, 
\begin{eqnarray}\label{yu-7-2-21}
	\int_{\omega_4}\theta^2|\mbox{div}(\bar{A}\nabla V)|^2dxdx_{N+1}	
	+\int_{\omega_4}\mathcal{D}dxdx_{N+1}\nonumber\\
	\geq Cs^3\lambda^4\int_{\omega_4}\alpha^3|W|^2dxdx_{N+1}
	+Cs\lambda^2\int_{\omega_4}\alpha|\nabla W|^2dxdx_{N+1}.
\end{eqnarray}
By the definition of $\mathcal{D}$, we obtain 
\begin{eqnarray}\label{yu-7-2-22}
\int_{\omega_4}\mathcal{D}dxdx_{N+1} &=&2s\lambda^2\int_{\triangle_{r_4}(x_0)\times\{0\}}\alpha W (\bar{A}\nabla W)\cdot\vec{n}\nabla\varphi\cdot(\bar{A}\nabla\varphi)d\Gamma\nonumber\\
	&\;&+2s\lambda\int_{\triangle_{r_4}(x_0)\times\{0\}}\alpha(\bar{A}\nabla W)\cdot \vec{n}\nabla W\cdot(\bar{A}\nabla\varphi)d\Gamma\nonumber\\
	&\;&-s\lambda\int_{\triangle_{r_4}(x_0)\times\{0\}}\alpha\nabla W\cdot(\bar{A}\nabla W)(\bar{A}\nabla\varphi)\cdot\vec{n}d\Gamma\nonumber\\
	&\;&+s^3\lambda^3\int_{\triangle_{r_4}(x_0)\times\{0\}}\alpha^3|W|^2(\bar{A}\nabla\varphi)\cdot\vec{n}\nabla\varphi\cdot(\bar{A}\nabla\varphi)d\Gamma\nonumber\\
	&\leq& Cs\lambda\int_{\triangle_{r_4}(x_0)\times\{0\}}\alpha |\nabla W|^2d\Gamma
	+Cs^3\lambda^3\int_{\triangle_{r_4}(x_0)\times\{0\}}\alpha^3|W|^2d\Gamma.
\end{eqnarray}
From (\ref{yu-7-2-21}) and (\ref{yu-7-2-22}), we have 
\begin{eqnarray}\label{yu-7-3-1}
	&\;&Cs^3\lambda^4\int_{\omega_4}\alpha^3|W|^2dxdx_{N+1}+Cs\lambda^2\int_{\omega_4}\alpha|\nabla W|^2dxdx_{N+1}\nonumber\\
	&\leq&\int_{\omega_4}\theta^2|\mbox{div}(\bar{A}\nabla V)|^2dxdx_{N+1}+Cs\lambda\int_{\triangle_{r_4}(x_0)\times\{0\}}\alpha
	|\nabla W|^2d\Gamma \nonumber\\
&&\;\;+Cs^3\lambda^3\int_{\triangle_{r_4}(x_0)\times\{0\}}\alpha^3|W|^2d\Gamma.
\end{eqnarray}
        \textbf{Step 2.} Now, we return $W$ in (\ref{yu-7-3-1}) to $V$.  Note that 
\begin{eqnarray}\label{yu-7-3-2}
	\frac{1}{C}\theta^2(|\nabla V|^2+s^2\lambda^2\alpha^2|V|^2)\leq
	|\nabla W|^2+s^2\lambda^2\alpha^2|W|^2\leq
 C\theta^2(|\nabla V|^2+s^2\lambda^2\alpha^2|V|^2).
\end{eqnarray}
	 Based on the case of  the potential  $b$, by Lemma \ref{hardy}, (\ref{yu-9-26-2}) with $\epsilon=0$, the first equation in (\ref{yu-7-2-13}) and the fact that $\omega_4$ is a rectangle domain, we have
\begin{eqnarray}\label{yu-7-3-b-1}
	&\;&\int_{\omega_4}\theta^2|\mbox{div}(\bar{A}\nabla V)|^2dxdx_{N+1}\nonumber\\
	&=&\int_{\omega_4}\theta^2|\mbox{div}(\bar{A}\nabla\eta)g+2\nabla\eta\cdot(\bar{A}\nabla g)+bV|^2dxdx_{N+1}\nonumber\\
	&\leq&C\int_{\omega_4}\theta^2|\mbox{div}(\bar{A}\nabla\eta)g+2\nabla\eta\cdot(\bar{A}\nabla g)|^2dxdx_{N+1}
	+C\int_{\omega_4}\theta^2|bV|^2dxdx_{N+1}\nonumber\\
	&=&C\int_{\omega_4}\theta^2|\mbox{div}(\bar{A}\nabla\eta)g+2\nabla\eta\cdot(\bar{A}\nabla g)|^2dxdx_{N+1}	+C\int_{\triangle_{r_4}(x_0)\times(0,r_4)}|bW|^2dxdx_{N+1}\nonumber\\
	&\leq&C\int_{\omega_4}\theta^2|\mbox{div}(\bar{A}\nabla\eta)g+2\nabla\eta\cdot(\bar{A}\nabla g)|^2dxdx_{N+1}	+C\int_{\omega_4}(|\nabla W|^2+|W|^2)dxdx_{N+1}.
\end{eqnarray}
	Therefore, by (\ref{yu-7-3-1})--(\ref{yu-7-3-b-1}) and taking $\lambda_0>1$ large enough, we get 
{\begin{eqnarray}\label{yu-7-3-3}
	&\;&Cs^3\lambda^4\int_{\omega_4}\alpha^3\theta^2|V|^2dxdx_{N+1}+Cs\lambda^2\int_{\omega_4}\alpha\theta^2|\nabla V|^2dxdx_{N+1}\nonumber\\
	&\leq&C\int_{\omega_4}\theta^2|\mbox{div}(\bar{A}\nabla\eta)g+2\nabla\eta\cdot(\bar{A}\nabla g)|^2dxdx_{N+1}\nonumber\\
	&\;&+Cs\lambda\int_{\triangle_{r_4}(x_0)\times\{0\}}\alpha\theta^2|\nabla V|^2d\Gamma
	+Cs^3\lambda^3\int_{\triangle_{r_4}(x_0)\times\{0\}}\alpha^3\theta^2|V|^2d\Gamma.
\end{eqnarray}}
	By the definition of $\varphi$ (see (\ref{yu-7-2-11})), we know that 
\begin{equation*}
\begin{cases}
	\alpha\geq e^{3\lambda}\;\;\mbox{and}\;\;\theta\geq e^{se^{3\lambda}}&\mbox{in}\;\;\omega_1,\\
	\alpha\leq e^{\lambda}\;\;\mbox{and}\;\;\theta\leq e^{se^{\lambda}}&\mbox{in}\;\;\overline{\omega_4}\backslash\omega_2. 
\end{cases}
\end{equation*}
	Moreover, by the definition of $\eta$, we have 
\begin{equation*}\label{yu-7-3-4}
	\nabla \eta=0\;\;\mbox{in}\;\;\omega_2\cup (\overline{\omega_4}\backslash\omega_3).
\end{equation*}
	By the fact that $V=\eta g$, one can get  
\begin{eqnarray}\label{yu-7-3-5}
	&\;&Cs^3\lambda^4\int_{\omega_4}\alpha^3\theta^2|V|^2dxdx_{N+1}+Cs\lambda^2\int_{\omega_4}\alpha \theta^2|\nabla V|^2dxdx_{N+1}\nonumber\\
	&\geq&Cs^3\lambda^4\int_{\omega_1}\alpha^3\theta^2|g|^2dxdx_{N+1}+Cs\lambda^2\int_{\omega_1}\alpha \theta^2|\nabla g|^2dxdx_{N+1}\nonumber\\
	&\geq&Cs^3\lambda^4e^{9\lambda}e^{2se^{3\lambda}}\int_{\omega_1}|g|^2dxdx_{N+1}
	+Cs\lambda^2e^{3\lambda}e^{2se^{3\lambda}}\int_{\omega_1}|\nabla g|^2dxdx_{N+1}.
\end{eqnarray}
	Moreover, 
\begin{eqnarray}\label{yu-7-3-6}
	&\;&\int_{\omega_4}\theta^2|\mbox{div}(\bar{A}\nabla\eta)g+2\nabla\eta\cdot(\bar{A}\nabla g)\eta|^2dxdx_{N+1}\nonumber\\
	&\leq&\frac{C}{r^4}\int_{\omega_3\backslash\omega_2}\theta^2(|g|^2+|\nabla g|^2)dxdx_{N+1}	\leq\frac{C}{r^4}e^{2se^\lambda}\int_{\omega_3\backslash\omega_2}(|g|^2+|\nabla g|^2)dxdx_{N+1}.
\end{eqnarray}
	Further, 
{\begin{eqnarray}\label{yu-7-3-7}
	&\;&Cs\lambda\int_{\triangle_{r_4}(x_0)\times\{0\}}\alpha \theta^2|\nabla V|^2d\Gamma+Cs^3\lambda^3\int_{\triangle_{r_4}(x_0)\times\{0\}}\alpha^3\theta^2|V|^2d\Gamma\nonumber\\
	&\leq&\frac{Cs^3\lambda^3}{r^2}e^{3\lambda}e^{2se^{4\lambda}}
	\int_{\triangle_{r_3}(x_0)\times\{0\}}|g|^2d\Gamma+Cs\lambda e^\lambda e^{2se^{4\lambda}}\int_{\triangle_{r_3}(x_0)\times\{0\}}
	|\nabla g|^2d\Gamma.
\end{eqnarray}}
	Combining (\ref{yu-7-3-3})--(\ref{yu-7-3-7}), we have 
\begin{eqnarray*}\label{yu-7-3-8}
	Ce^{3\lambda}e^{2se^{3\lambda}}\int_{\omega_1}(|g|^2+|\nabla g|^2)dxdx_{N+1}	&\leq&\frac{1}{r^4}e^{2se^{\lambda}}\int_{\omega_3}(|g|^2+|\nabla g|^2)dxdx_{N+1}\nonumber\\
	&\;&+\frac{s^3\lambda^3}{r^2}e^{3\lambda}e^{2se^{4\lambda}}\int_{\triangle_{r_3}(x_0)\times\{0\}}(|g|^2+
	|\nabla g|^2)d\Gamma.
\end{eqnarray*}
Hence,
{\begin{eqnarray}\label{yu-7-3-9}
	C\int_{\omega_1}(|g|^2+|\nabla g|^2)dxdx_{N+1}&\leq&\frac{1}{r^4}e^{-3\lambda}e^{2s(e^\lambda-e^{3\lambda})}
	\int_{\omega_3}(|g|^2+|\nabla g|^2)dxdx_{N+1}\nonumber\\
	&\;&+\frac{s^3\lambda^3}{r^2}e^{2s(e^{4\lambda}-e^{3\lambda})}\int_{\triangle_{r_3}(x_0)\times\{0\}}(|g|^2+|\nabla g|^2)d\Gamma.
\end{eqnarray}}
	Fix $\lambda:=\lambda_0 >1$ and define
\begin{equation*}\label{yu-7-3-10}
	\epsilon:=e^{-3\lambda_0}e^{2s(e^{\lambda_0}-e^{3\lambda_0})},\;\;
	\mu:=\frac{2s(e^{4\lambda_0}-e^{3\lambda_0})+3(\ln s+\ln \lambda_0)}{2s(e^{3\lambda_0}-e^{\lambda_0})
	+3\lambda_0},
\end{equation*}
\begin{equation*}\label{yu-7-3-11}
	\epsilon_0:=e^{-3\lambda_0}e^{2s_0(e^{\lambda_0}-e^{3\lambda_0})}.
\end{equation*}
	So, (\ref{yu-7-3-9}) can be rewritten by 
\begin{multline}\label{yu-7-3-12}
	C\int_{\omega_1}(|g|^2+|\nabla g|^2)dxdx_{N+1}\\
	\leq \frac{\epsilon}{r^4}\int_{\omega_3}(|g|^2+|\nabla g|^2)dxdx_{N+1}	+\frac{\epsilon^{-\mu}}{r^2}\int_{\triangle_{r_3}(x_0)\times\{0\}}(|g|^2+|\nabla g|^2)d\Gamma.
\end{multline}

	We treat two cases separately. 
	
	$\bullet$ If 
\begin{equation*}\label{yu-7-3-13}
	\left(\frac{\int_{\triangle_{r_3}(x_0)\times\{0\}}(|g|^2+|\nabla g|^2)d\Gamma}{\int_{\omega_3}(|g|^2+|\nabla g|^2)dxdx_{N+1}	}\right)^{\frac{1}{1+\mu}}> \epsilon_0. 
\end{equation*}
	Then 
{\begin{eqnarray}\label{yu-7-3-14}
	&\;&C\int_{\omega_1}(|g|^2+|\nabla g|^2)dxdx_{N+1}\nonumber\\
	&=&C\left(\int_{\omega_3}(|g|^2+|\nabla g|^2)dxdx_{N+1}\right)^{1-\frac{1}{1+\mu}}
	\left(\int_{\omega_3}(|g|^2+|\nabla g|^2)dxdx_{N+1}\right)^{\frac{1}{1+\mu}}\nonumber\\
	&\leq&C\left(\int_{\omega_3}(|g|^2+|\nabla g|^2)dxdx_{N+1}\right)^{1-\frac{1}{1+\mu}}
	\left(\int_{\triangle_{r_3}(x_0)\times\{0\}}(|g|^2+|\nabla g|^2)d\Gamma\right)^{\frac{1}{1+\mu}}.
\end{eqnarray}}

$\bullet$	If 
{\begin{equation*}\label{yu-7-3-15}
	\left(\frac{\int_{\triangle_{r_3}(x_0)\times\{0\}}(|g|^2+|\nabla g|^2)d\Gamma}{\int_{\omega_3}(|g|^2+|\nabla g|^2)dxdx_{N+1}	}\right)^{\frac{1}{1+\mu}}\leq\epsilon_0. 
\end{equation*}}
	In this case, we choose a $s>s_0$ such that  {$\epsilon=\left(\frac{\int_{\triangle_{r_3}(x_0)\times\{0\}}(|g|^2+|\nabla g|^2)d\Gamma}{\int_{\omega_3}(|g|^2+|\nabla g|^2)dxdx_{N+1}
	}\right)^{\frac{1}{1+\mu}}$} in (\ref{yu-7-3-12}), we have 	
{\begin{eqnarray}\label{yu-7-3-16}	
	&\;&C\int_{\omega_1}(|g|^2+|\nabla g|^2)dxdx_{N+1}\nonumber\\
	&\leq&\frac{1}{r^4}\left(\int_{\omega_3}(|g|^2+|\nabla g|^2)dxdx_{N+1}\right)^{1-\frac{1}{1+\mu}}
	\left(\int_{\triangle_{r_3}(x_0)\times\{0\}}(|g|^2+|\nabla g|^2)d\Gamma\right)^{\frac{1}{1+\mu}}.
\end{eqnarray}}
	Combining (\ref{yu-7-3-14}) and (\ref{yu-7-3-16}), we get that 
{\begin{equation}\label{yu-7-3-17}
	\|g\|^2_{H^1(B_{r_1}^+(x_0,0))}\leq Cr^{-4}\|g\|_{H^1(B_{r_3}^+(x_0,0))}^{\frac{2}{1+\mu}}
	\left(\|g(\cdot,0)\|^2_{H^1(\triangle_{r_3}(x_0))}+\|\partial_{{N+1}}g(\cdot,0)\|^2_{L^2(\triangle_{r_3}(x_0))}\right)^{\frac{\mu}{1+\mu}}.
\end{equation}}

Note that $g$ is an even function with respect to the variable ${x_{N+1}}$. So, by (\ref{yu-7-3-17}), we have (\ref{yu-7-3-b-2}) and the proof is completed. 
\end{proof}

\par

\begin{lemma}[Three-ball inequality]
\label{yu-lemma-7-4-2}
 There is $\beta\in(0,1)$ such that for any
	$r\in(0,\frac{1}{12}\min\{R,L\})$, the inequality 
	\begin{equation}\label{yu-7-31-1-bb}
	\|g\|_{L^2(B_{6r}(x_0,0))}\leq C(r)\|g\|^\beta_{L^2(B_{r}(x_0,0))}\|g\|_{L^2(B_{8r}(x_0,0))}^{1-\beta}
\end{equation}
holds for all solutions of 
\begin{equation*}\label{yu-7-26-1}
	\mbox{div}(A(x)\nabla g)+l(x)g_{x_{N+1}x_{N+1}}-b(x)g=0\;\;\mbox{in}\;\;\triangle_R(x_0)\times(-L,L).
\end{equation*}
\end{lemma}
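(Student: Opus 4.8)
The plan is to deduce \eqref{yu-7-31-1-bb} from the pointwise Carleman estimate of Lemma \ref{yu-lemma-7-1-1}, applied with a \emph{radial} weight centered at $(x_0,0)$ and a cutoff that equals $1$ on an intermediate annular shell and $0$ both near the center and near the outer boundary, and then to optimize in the large Carleman parameter. The mechanism is the same as in the proof of Lemma \ref{yu-proposition-7-1-1}. Write $X=(x,x_{N+1})$ and $\rho=|X-(x_0,0)|$. First I would fix $\varphi(X)=F(\rho)$ with $F$ smooth and strictly decreasing on $[r/4,8r]$ (so $|\nabla\varphi|>0$ there), normalized so that $F>3$ near $\rho=r/2$ and $0<F<1$ near $\rho=7r$; put $\alpha=e^{\lambda\varphi}$, $\theta=e^{s\alpha}$. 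Integrating the inequality of Lemma \ref{yu-lemma-7-1-1} over the annulus $\{r/4<\rho<15r/2\}$ against a function $V$ compactly supported there makes the boundary contribution $\int\mathcal D$ vanish, and choosing $\lambda\geq\lambda_0$ and then $s\geq s_0$ large (exactly as in the derivation of \eqref{yu-7-2-21}) yields the Carleman inequality
\[
s^3\lambda^4\!\int\alpha^3\theta^2|V|^2 + s\lambda^2\!\int\alpha\theta^2|\nabla V|^2 \;\leq\; C\!\int\theta^2|\mbox{div}(\bar A\nabla V)|^2
\]
for all such $V\in H^1_0\cap H^2$ and all $s\geq s_0$, $\lambda\geq\lambda_0$.

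Next I would apply this with $V=\eta g$, where $\eta$ is a radial cutoff with $\eta\equiv0$ on $B_{r/4}(x_0,0)$, $\eta\equiv1$ on $B_{7r}(x_0,0)\setminus B_{r/2}(x_0,0)$, $\eta\equiv0$ outside $B_{15r/2}(x_0,0)$, and $|\nabla\eta|\leq Cr^{-1}$, $|\mbox{div}(\bar A\nabla\eta)|\leq Cr^{-2}$. Since $\mbox{div}(\bar A\nabla g)=bg$, one has $\mbox{div}(\bar A\nabla V)=\mbox{div}(\bar A\nabla\eta)g+2\nabla\eta\cdot(\bar A\nabla g)+bV$; the term $\int\theta^2|bV|^2=\int|b|^2|W|^2$ with $W=\theta V$ is, by the Hardy inequality in case (ii) or by \eqref{yu-9-26-2} in case (i) together with the definition of $R_0$, at most $\tfrac14\int|\nabla W|^2$, hence absorbed into the left-hand side after passing from $W$ back to $V$ via \eqref{yu-7-3-2}, precisely as in \eqref{yu-7-3-b-1}. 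The two surviving commutator terms are supported on the thin shells $B_{r/2}\setminus B_{r/4}$ and $B_{15r/2}\setminus B_{7r}$; bounding $\theta^2$ there by $e^{2s\alpha(r/4)}$ and $e^{2s\alpha(7r)}$ respectively, restricting the left-hand integral to $B_{6r}(x_0,0)\setminus B_{r/2}(x_0,0)$ where $\theta^2\geq e^{2s\alpha(6r)}$ with $\alpha(6r)>\alpha(7r)$ \emph{strictly}, and then using interior Caccioppoli estimates to convert the $H^1$-norms on the two shells into $L^2$-norms over $B_r(x_0,0)$ and $B_{8r}(x_0,0)$, I arrive (after adding $\|g\|_{L^2(B_{r/2})}^2\leq\|g\|_{L^2(B_r)}^2$ and dividing by $e^{2s\alpha(6r)}$) at
\[
\|g\|_{L^2(B_{6r}(x_0,0))}^2 \;\leq\; Cr^{-2}\big(e^{2sC_1}\|g\|_{L^2(B_r(x_0,0))}^2 + e^{-2sc_1}\|g\|_{L^2(B_{8r}(x_0,0))}^2\big),\qquad s\geq s_0,
\]
with $C_1=\alpha(r/4)-\alpha(6r)>0$ and $c_1=\alpha(6r)-\alpha(7r)>0$. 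If $\|g\|_{L^2(B_{8r})}\leq e^{(C_1+c_1)s_0}\|g\|_{L^2(B_r)}$, \eqref{yu-7-31-1-bb} holds trivially; otherwise the choice $s=\frac1{C_1+c_1}\log\!\big(\|g\|_{L^2(B_{8r})}/\|g\|_{L^2(B_r)}\big)\geq s_0$ balances the two terms and gives \eqref{yu-7-31-1-bb} with $\beta=c_1/(C_1+c_1)\in(0,1)$.

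I expect the main difficulty to be twofold. First, one must verify that a purely radial $\varphi$ really does produce a valid Carleman estimate through Lemma \ref{yu-lemma-7-1-1}, i.e. that the cross terms in that pointwise identity are genuinely dominated by $\mathcal B_1|W|^2+\mathcal B_2\nabla W\cdot(\bar A\nabla W)+2s\lambda^2\alpha|\nabla W\cdot(\bar A\nabla\varphi)|^2$ once $\lambda$ and $s$ are large; this is the computation underlying \eqref{yu-7-2-21} and it goes through precisely because $|\nabla\varphi|$ is bounded below on the annulus. Second, and more subtly, the radii must be arranged so that the weight \emph{strictly} separates the three balls — the conclusion ball $B_{6r}$ lying strictly inside $\{\eta\equiv1\}$ and $B_{8r}$ strictly containing the outer cutoff shell — since otherwise the exponents in the optimization collapse and one only obtains a logarithmic, rather than Hölder, modulus of continuity. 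The unbounded potential $b$ is a technical nuisance, but it is disposed of exactly as in Lemma \ref{yu-proposition-7-1-1}, through the smallness condition $R<R_0$.
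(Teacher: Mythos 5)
Your proposal follows essentially the same route as the paper's own proof: the pointwise Carleman estimate of Lemma \ref{yu-lemma-7-1-1} applied to $V=\eta g$ with a radial weight and an annular cutoff so that the divergence term $\int\mathcal{D}$ vanishes, absorption of the potential through the smallness $R<R_0$ exactly as in \eqref{yu-7-3-b-1}, commutator terms confined to two thin shells and converted to $L^2$-norms on $B_{r}(x_0,0)$ and $B_{8r}(x_0,0)$ by interior (Caccioppoli) estimates, and optimization in the parameter $s$; the paper merely takes the explicit quadratic weight $\varphi=(12r)^2-|x-x_0|^2-x_{N+1}^2$ and runs the optimization through the $\epsilon$--$\mu$ two-case argument \eqref{yu-7-31-15}--\eqref{yu-7-31-19} instead of balancing $s$ directly. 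The one point you should make explicit is the $r$-independence of $\beta=c_1/(C_1+c_1)$, i.e.\ that with your $r$-scaled, order-one profile the thresholds $\lambda_0,s_0$ (and hence the fixed $\lambda$ entering $C_1,c_1$) can be chosen uniformly in $r$; this is precisely the issue the paper addresses via its choice of quadratic weight and the uniform bound on the ratio defining $\mu$.
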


\begin{proof}
	
We divide the proof into the following two steps.
\vskip 5pt
   \textbf{Step 1.} 
   For any $r<\min\{R,L\}$, let us set 
 $$r_1=r,\quad r_2=6r,\quad r_3=8r,\quad r_4=12r.$$ 
Take 
\begin{equation}\label{yu-7-31-1}
	\varphi(x,x_{N+1})=r_4^2-|x-x_0|^2-x_{N+1}^2, \quad (x,x_{N+1})\in B_{r_4}(x_0,0),
\end{equation}
	 and set a cutoff function $\eta\in C^\infty(\mathbb{R}^{N+1};[0,1])$ to be such that 
\begin{equation*}\label{yu-7-31-2}
\begin{cases}
	\eta=0&\mbox{in}\;\;\overline{B_{\frac{r_1}{2}}(x_0,0)},\\
	\eta=1&\mbox{in}\;\;\overline{B_{\frac{r_2+r_3}{2}}(x_0,0)}\backslash B_{\frac{3r_1}{4}}(x_0,0),\\
	\eta=0&\mbox{in}\;\;\overline{B_{r_4}(x_0,0)}\backslash B_{\frac{r_2+3r_3}{4}}(x_0,0)\\
	|\mbox{div}\bar{A}\nabla \eta|+|\nabla\eta|^2\leq \frac{C}{r^2}&\mbox{in}\;\;\mathbb{R}^{N+1} ,
\end{cases}
\end{equation*}
	where $C>0$ is a positive constant  independent of $r$.  Let $V=\eta g$. Then, 
\begin{equation*}\label{yu-7-31-2}
\begin{cases}
	\mbox{div}(\bar{A}\nabla V)-bV=\mbox{div}(\bar{A}\nabla\eta) g+2\nabla\eta\cdot(\bar{A}\nabla g)&\mbox{in}\;\;
	B_{r_4}(x_0,0),\\
	|\nabla V|=V=0&\mbox{on}\;\;\partial B_{r_4}(x_0,0). 
\end{cases}
\end{equation*}
Taking $W:=\theta V$ and repeating the proof of Step 1 in  Lemma \ref{yu-proposition-7-1-1}, one can claim that there is  $\lambda_0(r)>0$ such that for any $\lambda\geq \lambda_0(r)$, one can find $s_0(r)>1$ such that 
	$s\geq s_0$,
\begin{eqnarray*}\label{yu-7-31-3}
	C(r)s^3\lambda^4\int_{B_{r_4}(x_0,0)}\alpha^3|W|^2dxdx_{N+1}+C(r)s\lambda^2\int_{B_{r_4}(x_0,0)}\alpha|\nabla W|^2dxdx_{N+1}\nonumber\\
	\leq \int_{B_{r_4}(x_0,0)}\theta^2|\mbox{div}(\bar{A}\nabla V)|^2dxdx_{N+1}.
\end{eqnarray*}	
	Similar to the proof of (\ref{yu-7-3-3}), we can get  
\begin{eqnarray}\label{yu-7-31-4}
	C(r)s^3\lambda^4\int_{B_{r_4}(x_0,0)}\alpha^3\theta^2|V|^2dxdx_{N+1}
	+C(r)s\lambda^2\int_{B_{r_4}(x_0,0)}\alpha\theta^2|\nabla V|^2dxdx_{N+1}\nonumber\\
	\leq \int_{B_{r_4}(x_0,0)}\theta^2|\mbox{div}(\bar{A}\nabla \eta)g+2\nabla\eta\cdot(\bar{A}\nabla g)|^2dxdx_{N+1}.
\end{eqnarray}
\vskip 5pt
	\textbf{Step 2.} By the definition of $\varphi$ (see (\ref{yu-7-31-1})), we have 
\begin{equation*}\label{yu-7-31-5}
\begin{cases}
	\alpha\geq e^{108\lambda r^2}\geq 1,\;\;\theta\geq e^{se^{108\lambda r^2}}
	&\mbox{in}\;\;\overline{B_{r_2}(x_0,0)}\backslash B_{r_1}(x_0,0),\\
	\theta\leq e^{se^{144\lambda r^2}}&\mbox{in}\;\;\overline{B_{\frac{3r_1}{4}}(x_0,0)},\\
	\theta\leq e^{se^{95\lambda r^2}}&\mbox{in}\;\;
	\overline{B_{\frac{r_2+3r_3}{4}}(x_0,0)}\backslash B_{\frac{r_2+r_3}{2}}(x_0,0). 
\end{cases}
\end{equation*}
	Further, 
\begin{equation*}\label{yu-7-31-6}
	|\mbox{div}(\bar{A}\nabla\eta)|=|\nabla \eta|=0\;\;\mbox{in}\;\;\overline{B}_{\frac{r_1}{2}(x_0,0)}\bigcup \left(\overline{B_{\frac{r_2+r_3}{2}}(x_0,0)}\backslash B_{\frac{3r_1}{4}}(x_0,0)\right)\bigcup \left(\overline{B_{r_3}(x_0,0)}\backslash B_{\frac{r_2+3r_3}{4}}(x_0,0)\right).
\end{equation*}
	Hence, from the fact $V=\eta g$, we have 
\begin{equation}\label{yu-7-31-7}
	C(r)s^3\lambda^4\int_{B_{r_4}(x_0,0)}\alpha^3\theta^2|V|^2dxdx_{N+1}
	\geq C(r)s^3\lambda^4e^{2se^{108\lambda r^2}}\int_{B_{r_2}(x_0,0)\backslash B_{r_1}(x_0,0)}|g|^2dxdx_{N+1},
\end{equation}
	and 
\begin{eqnarray}\label{yu-7-31-8}
	&\;&\int_{B_{r_4}(x_0,0)}\theta^2|\mbox{div}(\bar{A}\nabla \eta)g+2\nabla\eta\cdot(\bar{A}\nabla g)|^2dxdx_{N+1}\nonumber\\
	&\leq& e^{2se^{144\lambda r^2}}\int_{B_{\frac{3r_1}{4}}(x_0,0)\backslash B_{\frac{r_1}{2}}(x_0,0)}
	\left(\frac{1}{r^4}|g|^2+\frac{1}{r^2}|\nabla g|^2\right)dxdx_{N+1}\nonumber\\
	&\;&+e^{2se^{95\lambda r^2}}\int_{B_{\frac{r_2+3r_3}{4}}(x_0,0)\backslash B_{\frac{r_2+r_3}{2}}(x_0,0)}
	\left(\frac{1}{r^4}|g|^2+\frac{1}{r^2}|\nabla g|^2\right)dxdx_{N+1}.
\end{eqnarray}
	By the interior estimate of elliptic equations
\begin{equation*}\label{yu-7-31-9}
	\int_{B_{\frac{3r_1}{4}}(x_0,0)\backslash B_{\frac{r_1}{2}}(x_0,0)}|\nabla g|^2dxdx_{N+1}
	\leq\frac{C}{r^2}\int_{B_{r_1}(x_0,0)}|g|^2dxdx_{N+1}
\end{equation*}	
	and 
\begin{equation*}\label{yu-7-31-10}
	\int_{B_{\frac{r_2+3r_3}{4}}(x_0,0)\backslash B_{\frac{r_2+r_3}{2}}(x_0,0)}|\nabla g|^2dxdx_{N+1}
	\leq \frac{C}{r^2}\int_{B_{r_3}(x_0,0)\backslash B_{r_2}(x_0,0)}|g|^2dxdx_{N+1}. 
\end{equation*}
	These, along with (\ref{yu-7-31-8}), yield that 
\begin{eqnarray}\label{yu-7-31-11}
	&\;&\int_{B_{r_4}(x_0,0)}\theta^2|\mbox{div}(\bar{A}\nabla \eta)g
	+2\nabla\eta\cdot(\bar{A}\nabla g)|^2dxdx_{N+1}\nonumber\\
	&\leq&C\frac{1}{r^4}e^{2se^{144\lambda r^2}}\int_{B_{r_1}(x_0,0)}|g|^2dxdx_{N+1}
	+C\frac{1}{r^4}e^{2se^{95\lambda r^2}}\int_{B_{r_3}(x_0,0)\backslash B_{r_2}(x_0,0)}|g|^2dxdx_{N+1}. 
\end{eqnarray}
	From (\ref{yu-7-31-4}), (\ref{yu-7-31-7}) and (\ref{yu-7-31-11}), we get 
\begin{eqnarray}\label{yu-7-31-12}
	C(r)\int_{B_{r_2}(x_0,0)\backslash B_{r_1}(x_0,0)}|g|^2dxdx_{N+1}
	\leq e^{2s(e^{144\lambda r^2}-e^{108\lambda r^2})}\int_{B_{r_1}(x_0,0)}|g|^2dxdx_{N+1}\nonumber\\
	+e^{2s(e^{95\lambda r^2}-e^{108\lambda r^2})}\int_{B_{r_3}(x_0,0)}|g|^2dxdx_{N+1}. 
\end{eqnarray}
	Fix $\lambda:=\lambda_0>0$ and denote  
\begin{equation*}\label{yu-7-31-13}
	\epsilon:=e^{2s(e^{95\lambda_0r^2}-e^{108\lambda_0 r^2})},\;\;
	\epsilon_0:=e^{2s_0(e^{95\lambda_0r^2}-e^{108\lambda_0 r^2})}
\end{equation*}
	and 
\begin{equation*}\label{yu-7-31-14}
	\mu:=\min_{r> 0}\frac{e^{144\lambda_0r^2}-e^{108\lambda_0 r^2}}{e^{108\lambda_0 r^2}-e^{95\lambda_0 r^2}}>0. 
\end{equation*}
	Note that, this minimum can be taken by the fact 
$$
	\lim_{r\to 0}\frac{e^{144\lambda_0r^2}-e^{108\lambda_0 r^2}}{e^{108\lambda_0 r^2}-e^{95\lambda_0 r^2}}=\frac{36}{13}.
$$
	Then, it follows from  (\ref{yu-7-31-12}) that 
\begin{multline}\label{yu-7-31-15}
	C(r)\int_{B_{r_2}(x_0,0)\backslash B_{r_1}(x_0,0)}|g|^2dxdx_{N+1}\\
	\leq \epsilon^{-\mu} \int_{B_{r_1}(x_0,0)}|g|^2dxdx_{N+1}
	+\epsilon\int_{B_{r_3}(x_0,0)}|g|^2dxdx_{N+1}.
\end{multline}

We treat in two cases separately.

$\bullet$ If 
\begin{equation*}\label{yu-7-31-16}
	\left(\frac{\int_{B_{r_1}(x_0,0)}|g|^2dxdx_{N+1}}{\int_{B_{r_3}(x_0,0)}|g|^2dxdx_{N+1}}\right)^{\frac{1}{1+\mu}}>\epsilon_0, 
\end{equation*}
	then 
\begin{eqnarray}\label{yu-7-31-17}
	&\;&C(r)\int_{B_{r_2}(x_0,0)\backslash B_{r_1}(x_0,0)}|g|^2dxdx_{N+1}\nonumber\\
	&\leq& C\left(\int_{B_{r_3}(x_0,0)}|g|^2dxdx_{N+1}\right)^{1-\frac{1}{1+\mu}}
	\left(\int_{B_{r_3}(x_0,0)}|g|^2dxdx_{N+1}\right)^{\frac{1}{1+\mu}}\nonumber\\
	&\leq& C\epsilon_0\left(\int_{B_{r_3}(x_0,0)}|g|^2dxdx_{N+1}\right)^{\frac{\mu}{1+\mu}}
	\left(\int_{B_{r_1}(x_0,0)}|g|^2dxdx_{N+1}\right)^{\frac{1}{1+\mu}}.
\end{eqnarray}

$\bullet$	If 
\begin{equation*}\label{yu-7-31-18}
	\left(\frac{\int_{B_{r_1}(x_0,0)}|g|^2dxdx_{N+1}}{\int_{B_{r_3}(x_0,0)}|g|^2dxdx_{N+1}}\right)^{\frac{1}{1+\mu}}\leq \epsilon_0, 
\end{equation*}
	we choose $s\geq s_0$ such that $\epsilon=\left(\frac{\int_{B_{r_1}(x_0,0)}|g|^2dxdx_{N+1}}{\int_{B_{r_3}(x_0,0)}|g|^2dxdx_{N+1}}\right)^{\frac{1}{1+\mu}}$. 
	Then, by (\ref{yu-7-31-15}), we have 
\begin{equation}\label{yu-7-31-19}
	C(r)\int_{B_{r_2}(x_0,0)\backslash B_{r_1}(x_0,0)}|g|^2dxdx_{N+1}
	\leq 2\left(\int_{B_{r_3}(x_0,0)}|g|^2dxdx_{N+1}\right)^{\frac{\mu}{1+\mu}}
	\left(\int_{B_{r_1}(x_0,0)}|g|^2dxdx_{N+1}\right)^{\frac{1}{1+\mu}}.
\end{equation}
	So, by (\ref{yu-7-31-17}) and (\ref{yu-7-31-19}), we get (\ref{yu-7-31-1-bb}) 
	with $\beta=\frac{1}{1+\mu}$. 
	The proof is completed. 
\end{proof}

\subsection{Proof of Proposition \ref{yu-theorem-7-5-1}}

\begin{proof}[\textbf{Proof of Proposition \ref{yu-theorem-7-5-1}}]
 Arbitrarily take $R\in(0,\min\{R_0,\rho\})$. Let $u_1$ and $u_2$ be accordingly the solution to 
\begin{equation*}\label{yu-7-4-4}
\begin{cases}
	l(x)\partial_tu_{1}-\mbox{div}(A(x)\nabla u_1)+b(x)u_1=0&\mbox{in}\;\;\triangle_R(x_0)\times(0,2T),\\
	u_1=u&\mbox{on}\;\;\partial \triangle_R(x_0)\times(0,2T),\\
	u_1(\cdot,0)=0 &\mbox{in}\;\;\triangle_{R}(x_0)
\end{cases}
\end{equation*}	
	and
\begin{equation*}\label{yu-7-4-5}
\begin{cases}
	l(x)\partial_tu_{2}-\mbox{div}(A(x)\nabla u_2)+b(x)u_2=0&\mbox{in}\;\;\triangle_R(x_0)\times(0,2T),\\
	u_2=0&\mbox{on}\;\;\partial \triangle_R(x_0)\times(0,2T),\\
	u_2(\cdot,0)=u(\cdot,0)&\mbox{in}\;\;\triangle_{R}(x_0).
\end{cases}
\end{equation*}
It is clear that
	$u=u_1+u_2$ in $\triangle_R(x_0)\times[0,2T]$.
By a standard energy estimate for  parabolic equations, we have
\begin{equation}\label{yu-7-4-7}
	\sup_{t\in[0,T]}\|u_2(\cdot,t)\|_{H^1(\triangle_R(x_0))}\leq Ce^{CT}\|u(\cdot,0)\|_{H^1(\triangle_R(x_0))}.
\end{equation}
Hence
\begin{equation}\label{yu-7-4-8}
	\sup_{t\in[0,T]}\|u_1(\cdot,t)\|_{H^1(\triangle_R(x_0))}\leq C(1+e^{CT})\sup_{t\in[0,T]}\|u(\cdot,t)\|_{H^1(\triangle_R(x_0))}.
\end{equation}

Fix arbitrarily $t_0\in(0,\frac{T}{2})$ and let $v_1$ be the solution of 
 \begin{equation*}\label{yu-11-29-4-jia}
\begin{cases}
    l(x)\partial_tv_1-\mbox{div}(A(x)\nabla v_1)+b(x)v_1=0&\mbox{in}\;\;\triangle_{R}(x_0)\times\mathbb{R}^+,\\
    v_1=\eta u_1&\mbox{on}\;\;\partial\triangle_R(x_0)\times\mathbb{R}^+,\\
    v_1(\cdot,0)=0&\mbox{in}\;\;\triangle_R(x_0),
\end{cases}
\end{equation*}
where $\eta$ is given by \eqref{yu-6-6-6}.
Clearly,
	 $u=v_1+u_2$ in $\triangle_R(x_0)\times[0,t_0]$. In particular, 
\begin{equation*}\label{yu-7-4-6}
	u(\cdot,t_0)=v_1(\cdot,t_0)+u_2(\cdot,t_0)\;\;\mbox{in}\;\;\triangle_R(x_0).
\end{equation*}
Define
\begin{equation*}\label{yu-6-18-5jia}
	\tilde{v}_1(\cdot,t):=
\begin{cases}
	v_1(\cdot,t)&\mbox{if}\;\;t\geq 0,\\
	0&\mbox{if}\;\;t<0,
\end{cases}
\end{equation*}
and
\begin{equation*}\label{yu-6-18-6jia}
	\hat{v}_1(x,\mu)=\int_{\mathbb{R}}e^{-i\mu t}\tilde{v}_1(x,t)dt\quad\text{for}\;\;(x,\mu)\in\triangle_R(x_0)\times\mathbb R.
\end{equation*}
Note 	from Lemma \ref{yu-lemma-6-10-1} that $\hat{v}_1$ is well defined.
\par
Let 
\begin{equation*}\label{yu-7-5-bb-1}
	\kappa:=\min\left\{\frac{1}{2},\frac{\sqrt{2}}{4e\Pi}\right\} \quad\text{with}\;\;\Pi\;\;\text{given in Lemma
	\ref{yu-lemma-6-18-1}}.
\end{equation*}
We  define
$$V=V_1+V_2\quad\mbox{in}\;\;\triangle_R(x_0)\times(-\kappa R,\kappa R),
$$
where 
\begin{equation}\label{yu-6-23-6jia}
	V_1(x,y)=\frac{1}{2\pi}\int_{\mathbb{R}}e^{it_0\mu}\hat{v}_1(x,\mu)
	\frac{\sinh(\sqrt{-i\mu}y)}{\sqrt{-i\mu}}d\mu\quad\mbox{in}\;\;\triangle_R(x_0)\times(-\kappa R,\kappa R)
\end{equation}
and 
\begin{equation}\label{yu-7-5-7}
	V_2(x,y)=\sum_{i=1}^\infty\alpha_ie^{-\mu_it_0}f_i(x)\frac{\sinh(\sqrt{\mu_i}y)}
	{\sqrt{\mu_i}}\;\;\mbox{in}\;\;\triangle_R(x_0)\times(-\kappa R,\kappa R)
\end{equation}
with 
\begin{equation*}\label{yu-7-5-6}
	\alpha_i=\int_{\triangle_R(x_0)}l(x)u_2(x,0)f_i(x)dx\;\;\mbox{for each}\;\;i\in\mathbb{N}^+,
\end{equation*}
and $\{\mu_i\}_{i=1}^\infty\subset\mathbb{R}^+$, $\{f_i\}_{i=1}^{\infty}\subset \mathcal{L}^2(\triangle_{R}(x_0))$  given by (\ref{yu-6-7-10}). Here we  note that from 
Lemma \ref{yu-lemma-6-18-1}, $V_1$ is also well defined. 
One can readily check that 
\begin{equation}\label{yu-7-5-9}
\begin{cases}
	\mbox{div}(A(x)\nabla V)+l(x)V_{yy}-b(x)V=0&\mbox{in}\;\;
	\triangle_{\frac{R}{2}}(x_0)\times(-\kappa R,\kappa R),\\
	V(x,0)=0&\mbox{in}\;\;\triangle_{\frac{R}{2}}(x_0),\\
	V_y(x,0)=u(x,t_0)&\mbox{in}\;\;\triangle_{\frac{R}{2}}(x_0).
\end{cases}
\end{equation}	
	By Lemma \ref{yu-lemma-7-4-2}, we have for any $r\in (0,\frac{1}{16} \kappa R)$, 
\begin{equation}\label{yu-7-4-10}
      \|V\|_{L^2(B_{6r}(x_0,0))}\leq C(r)\|V\|^\beta_{L^2(B_{r}(x_0,0))}\|V\|_{L^2(B_{8r}(x_0,0))}^{1-\beta}.
\end{equation}
Since $V_y$ also satisfies the first equation of (\ref{yu-7-5-9}),	by the interior estimate of elliptic equations
we find
\begin{eqnarray}\label{yu-7-4-11}
	\int_{B_{6r}(x_0,0)}|V|^2dxdy&\geq &Cr^2\int_{B_{11r/2}(x_0,0)}(|\nabla V|^2+|V_y|^2)dxdy\nonumber
	\\&\geq&\frac{Cr^2}{2}
	\left(\int_{B_{11r/2}(x_0,0)}|V_y|^2dxdy+\int_{B_{11r/2}(x_0,0)}|V_y|^2dxdy\right)\nonumber\\
	&\geq&Cr^2\left(\int_{B_{11r/2}(x_0,0)}|V_y|^2dxdy+r^2\int_{B_{5r}(x_0,0)}(|\nabla V_y|^2+|V_{yy}|^2)dxdy\right)\nonumber\\
	&\geq& Cr^3\left(\frac{1}{r}\int_{B_{5r}(x_0,0)}|V_y|^2dxdy+r\int_{B_{5r}(x_0,0)}|V_{yy}|^2dxdy\right).
\end{eqnarray}
     As a simple corollary of \cite[Lemma 9.9, page 315]{Brezis}, we have the following  trace theorem 
\begin{equation*}\label{yu-7-4-12}
	\int_{\triangle_{4r}(x_0)}|Q(x,0)|^2dx\leq C\left(\frac{1}{r}\int_{B_{5r}^+(x_0,0)}|Q|^2dxdy+r\int_{B^+_{5r}(x_0,0)}|Q_y|^2dxdy\right)
\end{equation*}
        for any $Q\in H^1(B_{5r}(x_0,0))$. Hence, by (\ref{yu-7-5-9}) and (\ref{yu-7-4-11}) we have 
\begin{eqnarray}\label{yu-7-4-13}
	Cr^3\int_{\triangle_{4r}(x_0)}|u(x,t_0)|^2dx\leq \int_{B_{6r}(x_0,0)}|V|^2dxdy.
\end{eqnarray}	
By Lemma \ref{yu-proposition-7-1-1},  we obtain that there is  $\gamma\in (0,1)$ such that for any $r\in(0,\frac{1}{3}\kappa R)$, 
\begin{equation}\label{yu-7-4-2}
	\|V\|_{L^2(B_r(x_0,0))}\leq Cr^{-2}\|V\|_{H^1(B_{2r}(x_0,0))}^{\gamma}\|u(\cdot,t_0)\|_{L^2(\triangle_{2r}(x_0))}^{1-\gamma}.
\end{equation}
Again, by the interior estimate, there is a constant $C>0$ such that 
\begin{equation}\label{yu-7-4-3}
	\|V\|_{H^1(B_{2r}(x_0,0))}\leq Cr^{-1} \|V\|_{L^2(B_{3r}(x_0,0))}.
\end{equation}
Hence, it follows from  (\ref{yu-7-4-2}) and (\ref{yu-7-4-3}) that
\begin{equation}\label{yu-7-4-1}
	\|V\|_{L^2(B_r(x_0,0))}\leq Cr^{-3}\|V\|_{L^2(B_{3r}(x_0,0))}^{\gamma}\|u(\cdot,t_0)\|_{L^2(\triangle_{2r}(x_0))}^{1-\gamma}.
\end{equation}
It follows from  (\ref{yu-7-4-13}), (\ref{yu-7-4-10}) and \eqref{yu-7-4-1} that  
\begin{equation}\label{yu-7-5-1}
	\|u(\cdot,t_0)\|_{L^2(\triangle_{4r(x_0)})}\leq C(r)r^{-3(\frac{1}{2}+\beta)}\|u(\cdot,t_0)\|_{L^2(\triangle_{2r}(x_0))}^{(1-\gamma)\beta}
	\|V\|^{1-(1-\gamma)\beta}_{L^2(B_{8r}(x_0,0))}.
\end{equation}
\par
\medskip

To finish the proof, it suffices to bound the term $\|V\|_{L^2(B_{8r}(x_0,0))}$. Recall that $V=V_1+V_2$, we will treat 
$V_1$ and $V_2$ separately.

In fact, we derive from (\ref{yu-6-23-6jia})  that
	for each $x\in \triangle_{8r}(x_0)\subset \triangle_{\frac{R}{2}}(x_0)$ and $|y|<\frac{\kappa R}{8}$, 
\begin{eqnarray*}\label{yu-7-5-2}
	|V_1(x,y)|
	&=&\left|\frac{1}{2\pi}\int_{\mathbb{R}}e^{it_0\mu}\hat{v}_1(x,\mu)\int_{-y}^ye^{\sqrt{-i\mu}s}dsd\mu\right|
	\leq \frac{1}{2\pi}\int_{\mathbb{R}}|\hat{v}_1(x,\mu)|\int_{-y}^y|e^{\sqrt{-i\mu}s}|dsd\mu
	\nonumber\\
	&\leq&\frac{\kappa R}{8\pi}\int_{\mathbb{R}}|\hat{v}_1(x,\mu)|e^{\frac{1}{8\sqrt{2}}\kappa\sqrt{|\mu|}R}d\mu\nonumber\\
	&\leq &\frac{\kappa R}{8\pi}\left(\int_{\mathbb{R}}|\hat{v}_1(x,\mu)|^2e^{\frac{3}{4\sqrt{2}}\kappa\sqrt{|\mu|}R}d\mu\right)^{\frac{1}{2}}
	\left(\int_{\mathbb{R}}e^{-\frac{1}{2\sqrt{2}}\kappa \sqrt{|\mu|}R}d\mu\right)^{\frac{1}{2}}\nonumber\\
	&=&\frac{\sqrt{2}}{4\pi}\left(\int_{\mathbb{R}}|\hat{v}_1(x,\mu)|^2e^{\frac{3}{4\sqrt{2}}\kappa\sqrt{|\mu|}R}d\mu\right)^{\frac{1}{2}}.
\end{eqnarray*}
	Hence, by Lemma \ref{yu-lemma-6-18-1} and (\ref{yu-7-4-8}), we have for each $r<\frac{R}{32}$,
\begin{eqnarray}\label{yu-7-5-3}
	\int_{\triangle_{8r}(x_0)}|V_1(x,y)|^2dx
	&\leq& CT^{-1}e^{CR^{1-N}(1+\frac{1}{T-t_0})T}G^2(R)
	\int_{\mathbb{R}}e^{-\frac{1}{4\sqrt{2}}\kappa \sqrt{|\mu|}R}d\mu\nonumber\\
	&\leq&\frac{CT^{-1}e^{CR^{1-N}(1+\frac{1}{T-t_0})(1+T)}G^2(R)}{R^2}.
\end{eqnarray}
	\par

While, by  (\ref{yu-7-5-7}) and (\ref{yu-7-4-7}) we obtain 
\begin{eqnarray}\label{yu-7-5-10}
	\int_{B_{8r}(x_0,0)}|V_2|^2dxdy&\leq&\Lambda_3 \int_{-8r}^{8r}\int_{\triangle_{R}(x_0)}l(x)|V_2|^2dxdy
	\leq \Lambda_3\int_{-8r}^{8r}\sum_{i=1}^\infty\alpha_i^2e^{-2\mu_it_0}\left|\frac{\sinh(\sqrt{\mu_i}y)}{\sqrt{\mu_i}}\right|^2dy\nonumber\\
	&\leq&2^{8}r^2\Lambda_3\left(1+e^{\frac{8\rho^2}{t_0}}\right)\sum_{i=1}^\infty\alpha_i^2\leq 
	Cr^2e^{\frac{C(1+T+T^2)}{t_0}}\int_{\triangle_R(x_0)}|u(x,0)|^2dx\nonumber\\
	&\leq& Ce^{\frac{C(1+T^2)}{t_0}}G^2(R).
\end{eqnarray}
	Therefore, by (\ref{yu-7-5-3}) and (\ref{yu-7-5-10}) we conclude that
\begin{equation*}\label{yu-7-5-11}
	\|V\|_{L^2(B_{8r}(x_0,0))}\leq CR^{-2}e^{\frac{CR^{1-N}(T^2+1)}{t_0}}G(R).
\end{equation*}
	This, together with (\ref{yu-7-5-1}), means that 
\begin{equation*}\label{yu-7-5-12}
	\|u(\cdot,t_0)\|_{L^2(\triangle_{4r})}
	\leq C(r)R^{-2[1-(1-\gamma)\beta]}e^{\frac{CR^{1-N}(T^2+1)}{t_0}[1-(1-\gamma)\beta]}
	\|u(\cdot,t_0)\|_{L^2(\triangle_{2r})}^{(1-\gamma)\beta}G^{1-(1-\gamma)\beta}(R).
\end{equation*}
	Taking $\sigma=(1-\gamma)\beta$ and using a scaling technique, the proof is immediately achieved.
	\end{proof}

\subsection{Proof of Proposition  \ref{yu-theorem-7-10-6}}

\begin{proof}[\textbf{Proof of Proposition \ref{yu-theorem-7-10-6}}]
We proceed the proof with three steps as follows.

\textbf{Step 1. In the interior.}   Let $K_1$ and $K_2$ be two compact subsets with non-empty interior   of $\Omega$.
Denoting $G_{\Omega}=\sup_{t\in[0,T]}\|u(\cdot,t)\|_{H^1(\Omega)}$, we shall show that 
\begin{equation}\label{du919}
\|u(\cdot,t_0)\|_{L^2(K_1)}\leq e^{\frac{C(T^2+1)}{t_0}}\|u(\cdot,t_0)\|^{\sigma}_{L^2(K_2)}G_{\Omega}^{1-\sigma}.
\end{equation}
In fact, there exists a sequence of balls $\{\triangle_{r}(x_i)\}_{j=0}^{p}$ such that 
\begin{equation*}\label{covering}
K_1\subset\bigcup_{i=1}^{p}\triangle_{r}(x_i)\subset\Omega, \;\;\;\;
\triangle_{r}(x_0)\subset K_2,
\end{equation*}
and for each $1\leq i\leq p$, there exists a chain
of balls $\triangle_{r}(x_i^j)$, $1\leq j\leq n_i$, such that
\begin{equation*}\label{chain}
\begin{split}
&\triangle_r(x_i^{1})=\triangle_{r}(x_i),\;\;\triangle_r(x_i^{n_i})=\triangle_{r}(x_0),\\
&\triangle_{r}(x_i^j)\subset \triangle_{2r}(x_i^{j+1})\subset\Omega,\;\;1\leq j\leq
n_i-1.
\end{split}
\end{equation*}
By Proposition~\ref{yu-theorem-7-5-1}, we obtain that there are constants 
$N_i^j= N_i^j(r,p)\geq 1$ and $\theta_i^j=\theta_i^j(r,p)\in(0,1)$ such
that
\begin{equation*}
\|u(\cdot,t_0)\|_{L^2(\triangle_r(x_i^j))}\leq \|u(\cdot,t_0)\|_{L^2(\triangle_{2r}(x_i^{j+1}))}\leq e^{\frac{N_i^j(T^2+1)}{t_0}}\|u(\cdot, t_0)\|_{L^2(\triangle_{r}(x_i^{j+1}))}^{\theta_i^j}
G_\Omega^{1-\theta_i^j}.
\end{equation*}
Iterating the above procedure, we derive that there are constants 
$N_i=N_i(K_1,K_2,p)\geq 1$ and $\theta_i=\theta_i(K_1,K_2,p)\in(0,1)$
such that
\begin{equation*}
\|u(\cdot,t_0)\|_{L^2(\triangle_{r}(x_i))}\leq e^{\frac{N_i(T^2+1)}{t_0}}\|u(\cdot, t_0)\|_{L^2(\triangle_{r}(x_0))}^{\theta_i}
G_\Omega^{1-\theta_i}.
\end{equation*}
Hence, \eqref{du919} follows.

\medskip

\textbf{Step 2. Flattening the boundary and taking the even reflection.}
Arbitrarily fix $x_0\in\partial\Omega$.  Without loss of generality, we may assume that $A(x_0)=I$.
Following the arguments to flatten locally the boundary as in \cite{Adolfsson-Escauriaza} (see also 
\cite{Canuto-Rosset-Vessella}),  we have that 
there exists a $C^1$-diffeomorphism $\Phi$ from $\triangle_{r_2}(0)$ to $\triangle_{r_1}(x_0)$ such that
\begin{equation*}\label{yu-7-27-1}
	\Phi(y', 0)\in \partial\Omega\cap \triangle_{r_1}(x_0)\;\;\mbox{for each}\;\;y'\in\triangle'_{r_2}(0),
\end{equation*}
\begin{equation*}\label{yu-7-27-4}
	\Phi(\triangle_{r_2}^{+}(0))\subset \triangle_{r_1}(x_0)\cap\Omega,
\end{equation*} 
\begin{equation}\label{yu-10-10-1}
C^{-1}\leq\text{det} J\Phi(y)\leq C\quad\text{for each}\;\;y\in\triangle_{r_2}(0),
\end{equation}
\begin{equation}\label{yu-10-10-2}
|\text{det} J\Phi(y)-\text{det} J\Phi(\tilde y)|\leq C|y-\tilde y|\quad\text{for each}\;\;y,\tilde y\in\triangle_{r_2}(0),
\end{equation}
\begin{equation*}\label{yu-10-10-3}
C^{-1}|y-\tilde y|\leq |\Phi(y)-\Phi(\tilde y)|\leq C|y-\tilde y|\quad\text{for each}\;\;y,\tilde y\in\triangle_{r_2}(0),
\end{equation*}
\begin{equation}\label{du8101}
 \tilde{a}_{jN}(y',0)=\tilde a_{Nj}(y',0)=0\;\;\;\text{for each}\;\;y'\in \triangle_{r_2}'(0), \;\;j=1,\dots, N-1,
\end{equation}
	where 
\begin{equation*}\label{yu-7-8-1}
	\tilde{A}(y)=[\tilde a_{ij}]_{N\times N}=\mbox{det}J\Phi(y)(J\Phi^{-1})(\Phi(y))^{tr}A(\Phi(y))(J\Phi^{-1})(\Phi(y)),\quad
y\in \triangle_{r_2}^{+}(0).
\end{equation*} 
\par
       By (\ref{yu-10-10-1}) and (\ref{yu-10-10-2}), one can check that $\tilde A(\cdot)$ satisfies the uniform ellipticity condition and the Lipschitz condition in $\triangle_{r_2}^{+}(0)$. 
         Denoting 
\begin{equation*}\label{yu-7-8-2}
z(y,t)=u(\Phi(y),t),\;\;\tilde{b}(y)=\mbox{det}J\Phi(y)b(\Phi(y))\quad\text{for each}\;\;y\in \triangle_{r_2}^{+}(0),\;\;t\in(0,2T),
\end{equation*}
by (\ref{yu-10-10-1}) we have
\begin{equation*}\label{yu-7-8-5}
	\tilde{b}(\cdot)\;\;\mbox{satisfies (\ref{yu-6-24-1-1-b}) in}\;\;\triangle_{r_2}^+(0),
\end{equation*}
\begin{equation*}\label{du8102}
\left\{
\begin{split}
\mbox{det}J\Phi(y)z_t(y,t)-\text{div}(\tilde A(y)\nabla z(y,t))+\tilde b(y)=0\;\;\;\;\text{in}\;\;  \triangle^+_{r_2}(0)	\times(0,2T),  \\
\frac{\partial z}{\partial y_N}=0\;\;\;\;\text{on}\;\;  (\triangle'_{r_2}(0)\times\{0\})	\times(0,2T).  \\
\end{split}\right.
\end{equation*}
For any $y=(y',y_N)\in\triangle_{r_2}(0)$, using the even reflection and denoting $\check{A}(y)=[\check{a}^{ij}(y)]_{N\times N}$ by 
\begin{equation*}\label{yu-7-8-6}
\begin{cases}
	\check{a}_{ij}(y',y_N)=\tilde{a}_{ij}(y',|y_N|),&\mbox{if}\;\;1\leq i,j\leq N-1,\;\;\mbox{or}\;\;i=j=N,\\
	\check{a}_{Nj}(y',y_N)=\check{a}_{jN}(y',y_N)=\mbox{sign}(y_N)\tilde{a}_{jN}(y',|y_N|),
	&\mbox{if}\;\;1\leq j\leq N-1,
\end{cases}
\end{equation*}
\begin{equation*}\label{yu-7-8-8}
	\check{b}(y',y_N)=\tilde{b}(y',|y_N|),\;\;	\check{l}(y',y_N)=\mbox{det}J\Phi(y',|y_N|),
\end{equation*}
and
\begin{equation*}\label{yu-7-8-9}
	Z(y,t)=z(y',|y_N|,t)\;\;\mbox{for each}\;\;(y,t)\in\triangle_{r_2}(0)\times(0,2T),
\end{equation*}
      By (\ref{du8101}), we see that $\check{A}$ verifies the uniform ellipticity condition and the Lipschitz condition in $\triangle_{r_2}(0)$,  $\check{b}(\cdot)$ verifies (\ref{yu-6-24-1-1-b}) in $\triangle_{r_2}(0)$, 
\begin{equation*}\label{yu-7-8-8}
C^{-1}\leq l(y)\leq C,\;\;		|l(y)-l(\tilde y)|\leq C|y-\tilde y|\quad\mbox{for each}\;\;y,\tilde y\in\triangle_{r_2}(0),
\end{equation*}
	and that 
\begin{equation}\label{yu-7-8-10}
	\check{l}(y)Z_t(y,t)-\mbox{div}(\check{A}(y)\nabla Z(y,t))+\check{b}(y)=0\;\;\mbox{in}\;\;\triangle_{r_2}(0)\times(0,2T).
\end{equation}
	
Let $\hat y=(0',r_2/2)$.   
For each $0<r\leq r_2/8$,  by applying Proposition \ref{yu-theorem-7-5-1} to the solution $Z$ of (\ref{yu-7-8-10}), 
 similar to the proof of Step 1, we obtain
\begin{equation*}\label{yu-7-8-b-1}
	\|Z(\cdot,t_0)\|_{L^2(\triangle_{2r}(0))}
	\leq C(r)e^{\frac{C(T^2+1)}{t_0}}
	\|Z(\cdot,t_0)\|_{L^2(\triangle_{r_2/4}(\hat y))}^{\sigma_1} G_1^{1-\sigma_1}\left(\triangle_{r_2}(0)\right),
\end{equation*}
	where  $G_1(\triangle_{r_2}(0))=\sup_{s\in[0,T]}\|Z(\cdot,s)\|_{H^1(\triangle_{r_2}(0))}$.	
Hence,
\begin{equation*}\label{yu-7-8-b-1}
	\|z(\cdot,t_0)\|_{L^2(\triangle^+_{2r}(0))}
	\leq C(r)e^{\frac{C(T^2+1)}{t_0}}
	\|z(\cdot,t_0)\|_{L^2(\triangle_{r_2/4}(\hat y))}^{\sigma_1} G_2^{1-\sigma_1}\left(\triangle_{r_2}(0)\right)
\end{equation*}	
        where $G_2(\triangle_{r_2}(0))=\sup_{s\in[0,T]}\|z(\cdot,s)\|_{H^1(\triangle_{r_2}^+(0))}$. Since the map $\Phi$ is $C^{1}$-diffeomorphism, we obtain that there exist $r_3>0$ and $\rho>0$ such that 
\begin{equation*}\label{yu-7-8-b-1}
	\|u(\cdot,t_0)\|_{L^2(\Omega\cap\triangle_{r_3}(x_0))}
	\leq C(r)e^{\frac{C(T^2+1)}{t_0}}
	\|u(\cdot,t_0)\|_{L^2(\Omega_{\rho})}^{\sigma_1} G_\Omega^{1-\sigma_1}.
\end{equation*}

\medskip

\textbf{Step 3. Completing the proof.}
When $\Gamma$ is a neighborhood of
$\partial\Omega$ in $\Omega$,  there are a sequence  $\{x_j\}_{j=1}^{p}\subset\partial\Omega$ and a sequence $\{\triangle_{r_j}(x_j)\}_{j=1}^{p}$ such that 
$$\Gamma\subset \bigcup_{j=1}^{p}(\Omega\cap \triangle_{r_j}(x_j)).$$
By the result in Step 2 and a finite covering argument,  we first have 
\begin{equation}\label{dujiu1}
\|u(\cdot,t_0)\|_{L^2(\Gamma)}\leq  Ce^{\frac{C(T^2+1)}{t_0}}
	\|u(\cdot,t_0)\|_{L^2(\Omega_{\rho})}^{\sigma_1} G_\Omega^{1-\sigma_1}
\end{equation}
with some $\rho>0$.
By the result in Step 1, we then have
\begin{equation*}
\|u(\cdot,t_0)\|_{L^2(\Omega_\rho)}\leq  Ce^{\frac{C(T^2+1)}{t_0}}
	\|u(\cdot,t_0)\|_{L^2(\omega)}^{\sigma_1} G_\Omega^{1-\sigma_1}.
\end{equation*}
This, together with \eqref{dujiu1}, indicates that 
\begin{equation*}
\|u(\cdot,t_0)\|_{L^2(\Gamma)}\leq  Ce^{\frac{C(T^2+1)}{t_0}}
	\|u(\cdot,t_0)\|_{L^2(\omega)}^{\sigma_2} G_\Omega^{1-\sigma_2}.
\end{equation*}
Which, combined with the result in Step 1 again, implies the desired estimate and completes the proof.
\end{proof}

\section{Appendix}
	
\subsection{Proof of Lemma \ref{yu-lemma-7-1-1}}
By the definition of $W$, we have  
\begin{eqnarray*}\label{yu-7-1-8}
	\nabla V&=&\nabla(\theta^{-1}W)=W\nabla \theta^{-1}+\theta^{-1}\nabla W\nonumber\\
	&=&-s\lambda \alpha\theta^{-1}W\nabla \varphi+\theta^{-1}\nabla W.
\end{eqnarray*}
	Therefore, 
\begin{eqnarray}\label{yu-7-1-9}
	-\theta\mbox{div}(\bar{A}\nabla V)&=& s\lambda^2\alpha W\nabla\varphi\cdot(\bar{A}\nabla \varphi)
	-s^2\lambda^2\alpha^2W\nabla\varphi\cdot(\bar{A}\nabla \varphi)\nonumber\\
	&\;&+2s\lambda\alpha\nabla W\cdot(\bar{A}\nabla \varphi)+s\lambda\alpha  W\mbox{div}(\bar{A}\nabla \varphi)
	-\mbox{div}(\bar{A}\nabla W).
\end{eqnarray}
	Let 
\begin{equation*}\label{yu-7-1-10}
\begin{cases}
	I_1:=-\mbox{div}(\bar{A}\nabla W)-s^2\lambda^2\alpha^2W\nabla\varphi\cdot(\bar{A}\nabla\varphi),\\
	I_2:=2s\lambda\alpha\nabla W\cdot(\bar{A}\nabla\varphi)+2s\lambda^2\alpha W\nabla\varphi\cdot(\bar{A}\nabla\varphi),\\
	I_3:=-\theta\mbox{div}(\bar{A}\nabla V)-s\lambda\alpha W\mbox{div}(\bar{A}\nabla \varphi)+s\lambda^2\alpha W\nabla \varphi\cdot(\bar{A}\nabla\varphi).
\end{cases}
\end{equation*}
	By (\ref{yu-7-1-9}), it is clear that $I_1+I_2=I_3$. Then 
\begin{equation}\label{yu-7-1-11}
	I_1I_2\leq \frac{1}{2}|I_3|^2.
\end{equation}
	For the term $|I_3|^2$, we have 
\begin{eqnarray}\label{yu-7-1-14}
	\frac{1}{2}|I_3|^2&\leq& \theta^2|\mbox{div}(\bar{A}\nabla V)|^2+2s^2\lambda^2\alpha^2|W|^2|\mbox{div} (\bar{A}\nabla\varphi)|^2\nonumber\\
	&\;&+2s^2\lambda^4\alpha^2|\nabla\varphi\cdot(\bar{A}\nabla\varphi)|^2|W|^2.
\end{eqnarray}
	By (\ref{yu-7-1-10}), we have 
\begin{eqnarray}\label{yu-7-1-15}
	I_1I_2&=&-2s\lambda\alpha[\mbox{div}(\bar{A}\nabla W)+s^2\lambda^2\alpha^2W\nabla\varphi\cdot(\bar{A}\nabla \varphi)][\nabla W\cdot(\bar{A}\nabla \varphi)+\lambda W\nabla\varphi\cdot (\bar{A}\nabla \varphi)]\nonumber\\
	&=&-2s\lambda^2\alpha W[\mbox{div}(\bar{A}\nabla W)+s^2\lambda^2\alpha^2W\nabla\varphi\cdot(\bar{A}\nabla\varphi)]\nabla\varphi\cdot(\bar{A}\nabla\varphi)\nonumber\\
	&\;&-2s\lambda\alpha\mbox{div}(\bar{A}\nabla W)\nabla W\cdot(\bar{A}\nabla \varphi)-2s^3\lambda^3\alpha^3W\nabla W\cdot (\bar{A}\nabla\varphi)\nabla\varphi\cdot(\bar{A}\nabla\varphi)\nonumber\\
	&:=&\sum_{i=1}^3J_i.
\end{eqnarray}	
	Next, we compute the terms $J_i$ one by one. For the term $J_1$, we have 
\begin{eqnarray}\label{yu-7-1-16}
	J_1&=&-2s^3\lambda^4\alpha^3|W|^2|\nabla\varphi\cdot(\bar{A}\nabla\varphi)|^2
	+2s\lambda^2\alpha\nabla W\cdot(\bar{A}\nabla W)\nabla\varphi\cdot(\bar{A}\nabla\varphi)\nonumber\\
	&\;&+2s\lambda^2 W\nabla[\alpha\nabla\varphi\cdot(\bar{A}\nabla\varphi)]\cdot(\bar{A}\nabla W)
	-2s\lambda^2\mbox{div}[\alpha W\bar{A}\nabla W\nabla\varphi\cdot(\bar{A}\nabla\varphi)].
\end{eqnarray}
	Moreover, 
\begin{eqnarray*}\label{yu-7-1-17}
	J_2&=&2s\lambda\alpha\nabla[\nabla W\cdot(\bar{A}\nabla\varphi)]\cdot(\bar{A}\nabla W)
	+2s\lambda^2\alpha|\nabla W\cdot(\bar{A}\nabla\varphi)|^2\nonumber\\
	&\;&-2s\lambda\mbox{div}[\alpha \bar{A}\nabla W\nabla W\cdot(\bar{A}\nabla\varphi)].
\end{eqnarray*}
	Note that 
\begin{eqnarray*}\label{yu-7-1-19}
	&\;&\nabla[\nabla W\cdot(\bar{A}\nabla\varphi)]\cdot(\bar{A}\nabla W)\nonumber\\
	&=&(\bar{A}\nabla W)\cdot[D^2W(\bar{A}\nabla\varphi)]
	+(\bar{A}\nabla W)[D^2\varphi (\bar{A}\nabla W)]\nonumber\\
	&\;&+\left(\sum_{i,j=1}^{N+1}\partial_iW\nabla \bar{a}^{ij}\partial_j\varphi\right)\cdot(\bar{A}\nabla W)\nonumber\\
	&=&\frac{1}{2}\nabla[\nabla W\cdot(\bar{A}\nabla W)]\cdot(\bar{A}\nabla\varphi)+(\bar{A}\nabla W)[D^2\varphi (\bar{A}\nabla W)]\nonumber\\
	&\;&+\left(\sum_{i,j=1}^{N+1}\partial_iW\nabla \bar{a}^{ij}\partial_j\varphi\right)\cdot(\bar{A}\nabla W)
	-\frac{1}{2}\left(\sum_{i,j=1}^{N+1}\partial_iW\nabla \bar{a}^{ij}\partial_jW\right)\cdot(\bar{A}\nabla\varphi).
\end{eqnarray*}
	Hence
\begin{eqnarray}\label{yu-7-1-20}
	J_2&=&2s\lambda^2\alpha|\nabla W\cdot(\bar{A}\nabla\varphi)|^2
	-s\lambda^2\alpha\nabla\varphi\cdot(\bar{A}\nabla\varphi)\nabla W\cdot(\bar{A}\nabla W)\cdot\nonumber\\
	&\;&-s\lambda\alpha\mbox{div}(\bar{A}\nabla\varphi)\nabla W\cdot (\bar{A}\nabla W)+2s\lambda\alpha(\bar{A}\nabla W)\cdot[D^2\varphi(\bar{A}\nabla W)]\nonumber\\
	&\;&{+2s\lambda\alpha\left(\sum_{i,j=1}^{N+1}\partial_iW\nabla \bar{a}^{ij}\partial_j\varphi\right)\cdot(\bar{A}\nabla W)-s\lambda\alpha\left(\sum_{i,j=1}^{N+1}\partial_iW\nabla \bar{a}^{ij}\partial_jW\right)\cdot(\bar{A}\nabla\varphi)}
	\nonumber\\
	&\;&-2s\lambda\mbox{div}[\alpha \bar{A}\nabla W\nabla W\cdot(\bar{A}\nabla\varphi)]+s\lambda\mbox{div}[\alpha\nabla W\cdot(\bar{A}\nabla W)\bar{A}\nabla\varphi].
\end{eqnarray}
         Further, 
\begin{eqnarray}\label{yu-7-1-18}
	J_3&=&3s^3\lambda^4\alpha^2|W|^2|\nabla\varphi\cdot(\bar{A}\nabla\varphi)|^2
	+s^3\lambda^3\alpha^3|W|^2\mbox{div}\{\bar{A}\nabla\varphi[\nabla\varphi\cdot (\bar{A}\nabla\varphi)]\}\nonumber\\
	&\;&-s^3\lambda^3\mbox{div}[\alpha^3|W|^2\bar{A}\nabla\varphi\nabla\varphi\cdot (\bar{A}\nabla\varphi)].
\end{eqnarray}
	Finally, by (\ref{yu-7-1-11})--(\ref{yu-7-1-18}) we obtain the desired identity and  complete  the proof.

\subsection{Proofs of some useful inequalities}
\subsubsection{Proof of (\ref{yu-9-29-1})}
	For each $h\in L^{\frac{N}{2}+\eta}(\triangle_r(x_0))$ and $f\in H^1(\triangle_r(x_0))$, we let $\theta=\frac{1}{r}(x-x_0)\in\triangle_1(0)$, 
	$\tilde{h}(\theta)=h(r\theta+x_0)=h(x)$ and $\tilde{f}(\theta)=f(x)$ similarly.
	One can check that 
$$
	\int_{\triangle_1(0)}|\tilde{h}||\tilde{f}|^2d\theta =r^{-N}\int_{\triangle_r(x_0)} |h||f|^2dx,
$$
$$
	\|\tilde{h}\|_{L^{\frac{N}{2}+\eta}(\triangle_1(0))}=r^{-\frac{2N}{N+2\eta}}
	\|h\|_{L^{\frac{N}{2}+\eta}(\triangle_r(x_0))},
$$
	and 
$$
	\|\tilde{f}\|_{L^2(\triangle_1(0))}=r^{-\frac{N}{2}}\|f\|_{L^2(\triangle_r(x_0))}.
$$
	Moreover, when $r\in(0,1)$, 
\begin{eqnarray*}
	\|f\|_{H^1(\triangle_r(x_0))}^2&=&r^N\int_{\triangle_1(0)}\left(\frac{1}{r^2}|\nabla_\theta\tilde{f}|^2+|\tilde{f}|^2\right)d\theta\nonumber\\
	&\geq& r^N\int_{\triangle_1(0)}(|\nabla_\theta \tilde{f}|^2+|\tilde{f}|^2)d\theta
	=r^N\|\tilde{f}\|^2_{H^1(\triangle_1(0))},
\end{eqnarray*}
	i.e., 
$$
	\|\tilde{f}\|_{H^1(\triangle_1(0))}\leq r^{-\frac{N}{2}}\|f\|_{H^1(\triangle_r(x_0))}. 
$$
	Therefore, by (\ref{yu-9-26-2}), we have 
\begin{eqnarray*}
        &\;&r^{-N}\int_{\triangle_r(x_0)}|h||f|^2dx=\int_{\triangle_1(0)}|\tilde{h}||\tilde{f}|^2d\theta\nonumber\\
        &\leq& \Gamma_2(\triangle_1(0),N,\eta)\|\tilde{h}\|_{L^{\frac{N}{2}+\eta}(\triangle_1(0))}
        \|\tilde{f}\|_{L^2(\triangle_1(0))}^{\frac{4\eta}{N+2\eta}}\|\tilde{f}\|^{\frac{2N}{N+2\eta}}_{H^1(\triangle_1(0))}\nonumber\\
        &\leq&\Gamma_2(\triangle_1(0), N,\eta)r^{-\frac{2N}{N+2\eta}}r^{-N}
       \|h\|_{L^{\frac{N}{2}+\eta}(\triangle_r(x_0))} \|f\|_{L^2(\triangle_r(x_0))}^{\frac{4\eta}{N+2\eta}}\|f\|^{\frac{2N}{N+2\eta}}_{H^1(\triangle_r(x_0))}. 
\end{eqnarray*}
	This implies that 
$$
	\int_{\triangle_r(x_0)}|h||f|^2dx\leq \Gamma_2(\triangle_1(0), N,\eta)r^{-\frac{2N}{N+2\eta}}
	 \|h\|_{L^{\frac{N}{2}+\eta}(\triangle_r(x_0))} \|f\|_{L^2(\triangle_r(x_0))}^{\frac{4\eta}{N+2\eta}}\|f\|^{\frac{2N}{N+2\eta}}_{H^1(\triangle_r(x_0))}.
$$
	Then (\ref{yu-9-29-1}) holds.
\subsubsection{Proof of (\ref{yu-6-22-8})}
	Indeed, for any $f\in H^1(I)$,  taking any $x\in I$, we have 
\begin{eqnarray*}
	|f(x)|^2\leq 2\left|\int_y^xf'(s)ds\right|^2+2|f(y)|^2
	\leq 2|I|\int_I|f'(s)|^2ds+2|f(y)|^2
\end{eqnarray*}
	for each $y\in I$. Integrating it with respect to $y$ over $I$, we get 
$$
	|f(x)|^2\leq 2|I|\int_{I}|f'(s)|^2ds+\frac{2}{|I|}\int_{I}|f(s)|^2ds. 
$$
	This implies (\ref{yu-6-22-8}).

\end{document}